\colorlet{auchblau}{blue!60!white}
\colorlet{hellblau}{blue!40!white}
\colorlet{hellrot}{red!70!white}
\colorlet{hellgrau}{black!20!white}
\colorlet{nochhellergrau}{black!20!white}
\colorlet{dunkelgrau}{black!60!white}
\tikzstyle{hvertex}=[thick,circle,inner sep=0.cm, minimum size=2mm, fill=white, draw=black]
\tikzstyle{hedge}=[very thick]
\tikzstyle{medge}=[thick]
\tikzstyle{harrow}=[thick,arrows=->]
\tikzstyle{darrow}=[thick,arrows=<-]
\tikzstyle{rededge}=[very thick,red]
\tikzstyle{point}=[draw,circle,inner sep=0.cm, minimum size=1mm, fill=black]
\tikzstyle{pointer}=[thick,->,shorten >=2pt,color=dunkelgrau]
\tikzstyle{facebdry}=[color=auchblau, very thick] 
\tikzstyle{face}=[facebdry,fill=hellblau]
\tikzstyle{nface}=[color=hellblau,fill=hellblau,thick] 
\tikzset{>={latex}}
\tikzstyle{tinyvx}=[thick,circle,inner sep=0.cm, minimum size=1.3mm, fill=white, draw=black]
\tikzstyle{smallvx}=[hvertex,minimum size=1.7mm]
\newcommand{\blase}[5]{\begin{scope}
		\draw[fill=nochhellergrau,rotate=#2, shift = {#1}, line width=1pt] (0,0) .. controls ++(0.4*#3,-0.7*#4) and ++(0,-0.8*#4) .. (#3,0); 
		\draw[fill=nochhellergrau,rotate=#2, shift = {#1}, line width=1pt] (0,0) .. controls ++(0.4*#3,0.7*#4) and ++(0,0.8*#4) .. (#3,0);
		\node at ($(#2:#3/2)+#1$){#5};
\end{scope}}
\newcommand{\stengelblase}[6]{\coordinate (verschiebung) at ($#1+(#3:#2)$){}; \draw[hedge] #1 -- (verschiebung); \blase{(verschiebung)}{#3}{#4}{#5}{#6} }
\newcommand{\buendel}[2]{\draw[medge,bend left=30] #1 to #2; \draw[medge,bend left=10] #1 to #2; \draw[medge,bend right=30] #1 to #2; \draw[medge,bend right=10] #1 to #2;}
\newtheorem{definition}{Definition}
\newtheorem{proposition}[definition]{Proposition}
\newtheorem{theorem}[definition]{Theorem}
\newtheorem{lemma}[definition]{Lemma}
\newtheorem{observation}[definition]{Observation}
\newtheorem{conjecture}[definition]{Conjecture}
\newtheorem{question}[definition]{Question}
\newcommand{\N}{\mathbb N}
\newcommand{\es}{\emptyset}
\renewcommand{\phi}{\varphi}
\renewcommand{\epsilon}{\varepsilon}
\newcommand{\COMMENT}[1]{}
\renewcommand{\COMMENT}{\footnote} 
\newcommand{\comment}[1]{}
\newcommand{\emtext}[1]{\text{\em #1}}
\newcommand{\mymargin}[1]{%
  \marginpar{%
    \begin{minipage}{\marginparwidth}\small%
      \begin{flushleft}%
        #1%
      \end{flushleft}%
   \end{minipage}%
  }%
}%
\newcommand{\sm}{\setminus}
\newcommand{\pw}{{\rm pw}}
\newcommand{\EP}{Erd\H os-P\'osa property}
\newcommand{\cF}{\mathcal{F}}
\newcommand{\cP}{\mathcal{P}}
\newcommand{\wmin}{\omega_{min}}
\newcommand{\lev}{L}
\newcommand{\leT}{\le_T}
\newcommand{\Smin}{L_{15}^{\min}}
\title{The edge-\EP}
\author{Henning Bruhn\thanks{Partially supported by DFG, grant no.\  BR 5449/1-1.} \and Matthias Heinlein \and Felix Joos\thanks{Partially supported by DFG, grant no.~JO~1457/1-1.}}
\date{}
\begin{document}

\maketitle

\begin{abstract}
Robertson and Seymour proved that 
the family of all graphs containing a fixed graph $H$ as a minor 
has the \EP\ if and only if $H$ is planar.
We show that this is no longer true for the edge version of the \EP,
and indeed even fails when $H$ is an arbitrary subcubic tree of large pathwidth
or a long ladder. 
This answers a question of Raymond, Sau and Thilikos.
\end{abstract}

\section{Introduction}

Duality is arguably
 one of the most fundamental concepts in combinatorial optimisation and beyond.
In graph theory, we encounter it in Menger's theorem:
Given two vertex sets $A,B$ in a graph $G$, then the maximal number of vertex-disjoint $A$--$B$-paths in $G$
equals the minimal number of vertices in $G$ meeting all $A$--$B$-paths.

If we ask for different target objects instead of $A$--$B$-paths,
then this minimum maximum duality might break down: 
For example, in many graphs
the maximal number of vertex-disjoint edges (size of a maximum matching) does not
 coincide with the minimal number of vertices meeting all edges (size of a minimum vertex cover).
The duality principle, however, is not completely lost.
Indeed, the size of a minimum vertex cover is bounded 
from above and below by a function of the size of a maximum matching,
and vice versa.

Erd\H{o}s and P\'osa found a similar duality for cycles~\cite{EP65}.
Suppose a graph~$G$ contains at most $k$ vertex-disjoint cycles,
then there is a vertex set of size $O(k\log k)$ meeting all cycles.
More generally, we say a set of graphs $\cF$ has the \emph{\EP}{} if there exists a function $f\colon\N\to\mathbb R$ such that for every graph $G$ and every integer $k$,
there are $k$ vertex-disjoint graphs in $G$ each isomorphic to a member of $\cF$
or there is a vertex set $X$ of $G$ of size at most $f(k)$ meeting all subgraphs in $G$ isomorphic to a graph in $\cF$.
Thus, the set of cycles has the \EP{}.
By now many families of graphs are known to have the property.
In particular, this includes various selections of cycles (parity and length constraints~\cite{BBR07,BJS14,MNSW17,Tho88}, rooted cycles~\cite{PW12}, and group-labelled cycles~\cite{HJW16}).

One of the most striking results in the area is due to Robertson and Seymour.
Say that a graph is an \emph{$H$-expansion} if the graph $H$ is its minor.  

\begin{theorem}[Robertson and Seymour \cite{RS86}] \label{metaThm}
The family of $H$-expansions has the \EP\ if and only if $H$ is a planar graph.
\end{theorem}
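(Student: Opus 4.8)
The statement is an equivalence, so I would prove the two directions separately, invoking a deep result in each. For the direction that the \EP\ \emph{fails} when $H$ is not planar, the plan is to produce, for every integer $m$, a graph $G_m$ that contains an $H$-expansion, contains no two vertex-disjoint $H$-expansions, and yet has no set of $m$ vertices meeting all its $H$-expansions; granting this, no bounding function $f$ can exist, since applying it with $k=2$ to $G:=G_{f(2)}$ would demand a hitting set of size at most $f(2)$, which $G$ lacks. To build $G_m$, note that since $H$ is not planar its Euler genus $g=\mathrm{eg}(H)$ is at least $1$; fix a surface $\Sigma$ of Euler genus exactly $g$ in which $H$ embeds, and let $G_m$ be a graph $2$-cell embedded in $\Sigma$ whose face-width is at least $\rho+cm$, where $\rho$ and $c$ are the constants occurring below and where such graphs exist as sufficiently fine tessellations of $\Sigma$. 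Then I would invoke three facts. (i) By the representativity theorem for graphs on surfaces (Robertson--Seymour; see also the book of Mohar and Thomassen) there is $\rho=\rho(H,\Sigma)$ such that every graph embedded in $\Sigma$ with face-width at least $\rho$ contains $H$ as a minor; hence $G_m$, and also every $G_m-X$ with $|X|\le m$ (whose face-width is still at least $\rho$ by (iii)), is an $H$-expansion. (ii) Euler genus is minor-monotone and additive over the connected components of a graph, so two vertex-disjoint $H$-expansions in $G_m$ would force $\mathrm{eg}(G_m)\ge 2g>g$, impossible since $G_m$ embeds in $\Sigma$. (iii) Deleting a vertex from a surface-embedded graph lowers its face-width by at most an absolute constant $c$. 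Together these give exactly the three properties demanded of $G_m$.

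For the direction that the \EP\ \emph{holds} when $H$ is planar, I would build on the Excluded Grid Theorem (also contained in \cite{RS86}): there is a function $w$ such that every graph of treewidth at least $w(r)$ contains the $r\times r$ grid as a minor. Combined with the elementary fact that a planar graph $H$ is a minor of the $r_H\times r_H$ grid for some $r_H$, this yields a threshold $N(k):=w(\lceil\sqrt{k}\,\rceil\, r_H)$ with the property that whenever $\mathrm{tw}(G)\ge N(k)$ a grid minor of $G$ of side $\lceil\sqrt{k}\,\rceil\, r_H$ tiles into $\lceil\sqrt{k}\,\rceil^{2}\ge k$ disjoint $(r_H\times r_H)$-subgrids, each hosting an $H$-expansion, so that $G$ contains $k$ pairwise disjoint $H$-expansions and we are done. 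It then remains to treat graphs of treewidth at most $W:=N(k)-1$, that is, to establish the \EP\ for $H$-expansions within the class of graphs of bounded treewidth, with a bound depending only on that bound (and on $H$ and $k$).

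For the bounded-treewidth statement I would argue by induction on $k$, proving that a graph $G$ with $\mathrm{tw}(G)\le W$ and no $k$ pairwise disjoint $H$-expansions has a vertex set of size at most $(k-1)(W+1)$ whose deletion destroys all $H$-expansions (here one may assume $H$ connected). If $G$ has no $H$-expansion, take $X=\es$. Otherwise fix a rooted tree decomposition of width at most $W$; for a node $t$ let $G_t$ be the subgraph of $G$ induced by the union of the bags in the subtree rooted at $t$, and pick a node $t$ that is deepest among those whose $G_t$ is an $H$-expansion, with bag $B_t$. Since $B_t$ separates the rest of $G$ from the set $V(G_t)\setminus B_t$, which in turn decomposes, over the children $c$ of $t$, into the parts $V(G_c)\setminus B_t$ with no edges between different ones, and since by the choice of $t$ none of the $G_c$ is an $H$-expansion, the graph $G[V(G_t)\setminus B_t]$ -- the disjoint union of subgraphs of the $G_c$ -- carries no $H$-expansion; hence every $H$-expansion of $G-B_t$ lies inside $G^{\mathrm{out}}:=G-V(G_t)$. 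Moreover $G^{\mathrm{out}}$ has no $k-1$ pairwise disjoint $H$-expansions, since adjoining to them an $H$-expansion inside $G_t$ would yield $k$ disjoint ones in $G$. By the inductive hypothesis $G^{\mathrm{out}}$ has a set $X'$ of size at most $(k-2)(W+1)$ meeting all its $H$-expansions, and then $X:=B_t\cup X'$ meets all $H$-expansions of $G$. Combined with the threshold above, this gives the \EP\ with $f(k)=(k-1)\,w\!\left(\lceil\sqrt{k}\,\rceil\, r_H\right)$.

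The genuine difficulty, were one to insist on a self-contained account, sits entirely in the two invoked inputs: the Excluded Grid Theorem for the planar direction (a landmark in its own right) and, to a lesser degree, the surface representativity theorem for the non-planar direction. Everything layered on top -- the genus-additivity argument, the stability of face-width under vertex deletion, the grid-packing, and the tree-decomposition induction -- is routine by comparison, so granting those two results there is no serious obstacle left.
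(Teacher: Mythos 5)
A point of reference first: the paper does not prove Theorem~\ref{metaThm} at all --- it is quoted as background from Robertson and Seymour \cite{RS86} --- so there is no internal proof to compare yours against; what you have written is a reconstruction of the standard argument. Taken on its own terms, most of it is sound. The non-planar direction is the classical one: embeddings of large face-width in a surface of Euler genus $\mathrm{eg}(H)$, the representativity theorem, additivity of Euler genus over components, and the fact that deleting a vertex lowers face-width by at most one (your constant $c$; this needs the star of the deleted vertex to be a disk, which holds once the face-width is at least $2$, so it is fine here) combine exactly as you say to defeat any putative bounding function at $k=2$. For the planar direction, the reduction to bounded treewidth via the Excluded Grid Theorem, the tiling of a $\lceil\sqrt{k}\rceil r_H$-grid into $k$ disjoint $r_H\times r_H$ subgrids, and the deepest-node induction over a rooted tree decomposition are all correct as written --- \emph{for connected} $H$.

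The genuine gap is the parenthetical ``one may assume $H$ connected''. Your induction uses connectivity twice: to conclude that $G[V(G_t)\setminus B_t]$, a disjoint union of subgraphs of the various $G_c$, contains no $H$-expansion, and to conclude that $G-X=G[V(G_t)\setminus B_t]\,\cup\,(G^{\mathrm{out}}-X')$ contains none; if $H$ is disconnected, an $H$-minor can be split across several parts none of which contains one, and both conclusions fail. Nor is the reduction to the connected case free: replacing $H$ by a connected planar supergraph $H'$ only shrinks the family of expansions (every $H'$-expansion is an $H$-expansion, not conversely), so the \EP{} for $H'$-expansions does not transfer back to $H$-expansions. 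Since the theorem as stated allows arbitrary planar $H$, the disconnected case needs its own treatment (Robertson and Seymour's formulation covers it). Beyond that, be upfront that both directions rest entirely on two imported heavyweights --- the Excluded Grid Theorem and the surface representativity theorem --- which is legitimate for a statement of this kind, but they are doing essentially all of the work, exactly as you acknowledge.
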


Theorem~\ref{metaThm} is very general.
It extends  Erd\H{o}s and P\'osa's original result,
but it also provides a non-topological characterisation for planar graphs.
The high level proof technique of Robertson and Seymour, which
uses tools from their
Graph Minor project, has inspired many later authors.

So far, we have discussed only vertex-disjoint target graphs.
It is, however, equally natural to ask for edge-disjoint target graphs
or, alternatively, for an edge set meeting all target subgraphs.
We define the \emph{edge-\EP}{}  by replacing in the definition every occurrence of ``vertex'' by ``edge''.
Menger's theorem as well as Erd\H{o}s and P\'osa's theorem
have edge analogues in this sense.
While the literature  on the (vertex-)\EP{} is extensive,
we are only aware of a  small number of results on the edge-\EP{}.

It is easy to marginally modify the approach of 
Robertson and Seymour's proof of Theorem~\ref{metaThm} to show that $H$-expansions do 
not have the edge-\EP{} whenever $H$ is non-planar. 
Perhaps motivated by this observation, 
Raymond, Sau, and Thilikos~\cite{RST13} asked whether Theorem~\ref{metaThm} 
holds also in the ``edge'' version:

\begin{question}\label{edgeMeta}
Do $H$-expansions have the edge-\EP{} whenever $H$ is a planar graph?
\end{question}

There are partial answers.
For $H=K_3$, the question defaults to the edge version of the result of Erd\H{o}s and P\'osa.
If $H$ is a theta graph, a multigraph consisting of $r$ parallel edges,
then the answer is also ``yes''~\cite{RST13}.
Answering a question of Birmel\'e, Bondy, and Reed,
we showed that long cycles (cycles of length at least $\ell$ for some $\ell\in \N$) have the edge-property~\cite{BHJ17}.
Another family with the edge-property are $K_4$-expansions; see~\cite{BH18}.
While the first two results can be deduced from 
their corresponding vertex versions with not too much effort,
the proofs of the latter two results are involved and seem to require several new techniques.

The aim of this article is to show that,
nevertheless, there are significant
differences between the edge-\EP{} and the (vertex-)\EP{}:
\begin{theorem}\label{mainthm}
The family of $H$-expansions does not have the edge-Erd\H os-P\'osa property
if
\begin{enumerate}[\rm (i)]
\item $H$ is a ladder of length at least~$71$, or
\item $H$ is a subcubic tree of pathwidth at least~$19$.
\end{enumerate}
\end{theorem}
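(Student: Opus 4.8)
The plan is to construct, for every integer $k$, a graph $G_k$ that simultaneously contains only few edge-disjoint $H$-expansions but has no small edge hitting set. The natural template is a ``wall-like'' or ``grid-like'' host graph together with many copies of a small gadget arranged so that every $H$-expansion in $G_k$ must use a bottleneck edge, yet the bottlenecks cannot be covered cheaply because routing an $H$-expansion has just enough flexibility to avoid any bounded-size set. Concretely, for the ladder case I would take a long path (or cycle) $P$ and replace each vertex (or each edge) by a small ``bubble'' gadget in which two edge-disjoint short ladder-shaped pieces meet; the gadgets are glued so that a single large ladder subdivision must thread through a linear number of gadgets, and in each gadget it is forced to pass through one of a pair of designated edges. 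Choosing the parameters so that one copy of $H$ consumes the capacity of $\Omega(k)$ gadgets, one gets at most $O(k)$ edge-disjoint $H$-expansions while the minimum edge cover is forced to be of size $\Omega(k)$ times a growing factor, so no function $f$ can work.

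The key steps, in order, are: (1) Design the gadget $B$: a fixed graph with two distinguished ``entry'' edges such that any ladder subdivision of length $\ge 71$ entering $B$ through one port and leaving through another must use a specific bottleneck edge of $B$, but $B$ on its own contains no $H$-expansion and has small feedback-edge-type parameters. Here the pathwidth-$19$ and length-$71$ thresholds enter: they are exactly what one needs so that $H$ is ``too wide'' to be accommodated locally inside $B$ or inside a bounded number of $B$'s, forcing long-range routing. (2) Assemble $G_k$: arrange $N = N(k)$ copies of $B$ along a path/cycle backbone, with the number $N$ chosen large relative to $k$, and verify via a routing/linkage argument (using the tree or ladder structure of $H$, e.g. that a subcubic tree of pathwidth $\ge 19$ contains a large complete binary tree or many disjoint long paths that must spread out) that any $H$-expansion in $G_k$ occupies the bottleneck edges of at least, say, $c\cdot N/k$ consecutive gadgets. (3) Packing upper bound: conclude from (2) that $G_k$ has at most $k$ edge-disjoint $H$-expansions, since more would overload some gadget's capacity. (4) Covering lower bound: show that deleting fewer than, say, $N/(2\cdot 71)$ edges still leaves an $H$-expansion — because the backbone plus surviving gadgets still admits a long ladder / wide tree subdivision — so the minimum hitting set grows without bound as $k\to\infty$ while the packing number stays bounded by $k$; this is the contradiction to the edge-\EP.

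For part~(ii), the subcubic tree case, I expect to reduce to (or run in parallel with) part~(i): a subcubic tree $T$ of pathwidth at least $19$ contains, by the standard fact relating pathwidth of trees to embedded complete binary trees, a full binary tree of some fixed depth, and one checks that such a binary tree already ``needs'' a ladder-like nested routing structure, so the same host graph $G_k$ (or a mild variant where the gadget is tuned to binary-tree branchings rather than ladder rungs) works. The bulk of the work is bookkeeping: making the gadget do its job and proving the two bounds (3) and (4) rigorously. The main obstacle will be step (2): proving that every $H$-expansion is genuinely forced to be ``long and thin'' in $G_k$, i.e.\ that it cannot cleverly fold $H$ into a bounded region and thereby avoid the bottlenecks. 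This requires a careful topological/linkage argument showing that the cut structure of $G_k$ is incompatible with embedding a high-pathwidth graph locally — essentially, one must quantify how ``spread out'' any model of $H$ must be, and this is where the precise constants $71$ and $19$ get pinned down. A secondary difficulty is ensuring in step (4) that the surviving subgraph after an adversarial edge deletion still contains a whole copy of $H$ and not merely a large piece of it; handling this typically needs some slack built into $G_k$ (e.g.\ several parallel backbones or redundant rungs) so that a bounded deletion cannot destroy connectivity of the relevant linkage.
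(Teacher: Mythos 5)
There are two genuine gaps here, one logical and one structural. First, your refutation scheme does not actually contradict the edge-Erd\H{o}s-P\'osa property. In steps (3)--(4) you produce, for each $k$, a graph $G_k$ with packing number at most $k$ and covering number growing with $k$. That is perfectly consistent with the property: since $f$ may be an arbitrary function, one can simply take $f(k+1)$ larger than the cover of $G_k$. To refute the property you need the packing number bounded by an \emph{absolute constant} while the covering number is unbounded; the paper does exactly this, constructing for every $r$ a graph with \emph{no two} edge-disjoint $H$-expansions but no edge hitting set of size less than about $r$. Your ``one copy consumes $\Omega(k)$ gadgets, hence at most $O(k)$ copies'' bound is the wrong shape of statement.

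Second, the gadget-with-bottleneck design is self-defeating. If every $H$-expansion must use the designated bottleneck edge of $\Omega(N/k)$ consecutive gadgets, then deleting bottleneck edges spaced $\Theta(N/k)$ apart --- only $O(k)$ edges --- already hits every copy, so the cover is small, contradicting your step (4); and if you add enough redundancy (parallel backbones, redundant rungs) to make the cover large, you lose the packing bound. The real difficulty, which the paper's construction is engineered to resolve, is to make all copies pairwise share an edge \emph{without} there being any small edge set meeting all of them. The paper does this via the condensed wall $W$ with terminals $a,b,c,d$: it has pathwidth at most $5$, contains no two edge-disjoint $(a$--$b,c$--$d)$-linkages (Observation~\ref{trees:wallNoTwo}), yet survives any $r-1$ edge deletions with a linkage intact (Observation~\ref{wallNoHit}). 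The ladder (with two rungs removed and its three pieces inflated and glued to $a,b,c,d$) and the decomposed high-pathwidth subcubic tree are then attached so that the whole work lies in proving that \emph{every} subdivision of $H$ in the host graph is forced to realise an $(a$--$b,c$--$d)$-linkage inside $W$ (claim~\eqref{edlad} and Lemma~\ref{trees:mainLem}); this forcing argument, which pins down the constants $71$ and $19$, has no counterpart in your sketch.
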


We remark that Theorem~\ref{mainthm} not only shows that there is some tree but that for
all subcubic trees $H$ that are not too path-like,
the family of $H$-expansions does not have the edge-\EP.
Hence Question~\ref{edgeMeta} has a negative answer even if we restrict our attention to trees.
For the sake of a cleaner presentation of the argument, we make no attempt to optimise the constants in Theorem~\ref{mainthm}.

To verify that a certain family does not have the \EP, normally 
an obstruction is constructed that certifies this: a graph (or rather a graph family)
that does not admit two disjoint target graphs but that necessitates
an arbitrarily large vertex set meeting all target subgraphs.
Interestingly, these obstructions all follow a common pattern. 
They usually consist of a large grid (or wall), with a certain gadgets attached 
to the boundary of the grid. In all cases
known to us, it is straightforward to check that the obstruction
works as intended.

Our key contribution is  an entirely new type of obstruction (Section~\ref{sec:fromK4toLinkages}).
This type does not contain a large grid or wall, 
and it is technically involved to verify that these graphs are indeed obstructions.

\section{Linkages}
	\label{sec:fromK4toLinkages}

The proof of our main theorem is based 
on the insight that linkages between four terminals 
do not have the edge-\EP, not even if the ambient graph 
has small treewidth. In particular, we will construct a
wall-like structure of small treewidth, in which linkages 
fail to have the edge-property.

For our paper it is not important how the treewidth or pathwidth $\pw(G)$ of a graph $G$ is defined
but only that treewidth and pathwidth measure how tree-like and path-like $G$ is.
A formal definition can be found in the introduction of Robertson and Seymour's article \cite{RS83} and in many textbooks.

As a warm-up, and because it will lead us to the edge version,
we show that linkages do not have the ordinary \EP\ either. 
For vertex sets $A,B,C,D$, an \emph{($A$--$B$, $C$--$D$)-linkage} is the disjoint union 
of an $A$--$B$-path with an $C$--$D$-path. 
Suppose that ($A$--$B$, $C$--$D$)-linkages  have the \EP, and suppose 
that every graph $G$ that does not contain two \mbox{(vertex-)}disjoint
($A$--$B$, $C$--$D$)-linkages admits a set of at most $r$ vertices meeting 
every ($A$--$B$, $C$--$D$)-linkage.

		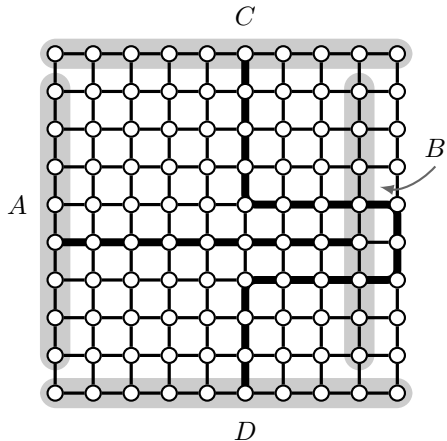
\begin{figure}[bht]
			\centering
			\begin{tikzpicture}[scale=0.5]
			\def\size{10};
			\pgfmathtruncatemacro{\sizeMtwo}{\size-2}
\tikzstyle{ABCD}=[rounded corners=6pt, fill = hellgrau,draw=none]
\tikzstyle{linkage}=[hedge,line width = 3pt,draw=black,rounded corners]			
			\draw[ABCD]
			(0.6,1.6) rectangle ++(0.8,\size-2-0.1);
			\node at (0,\size/2+1){$A$};
			
			\draw[ABCD]
			(0.6,0.6) rectangle ++(\size-0.2,0.8);
			\node at (\size/2+1,0){$D$};
			
			\draw[ABCD]
			(0.6,\size -1 + 0.6) rectangle ++(\size-0.2,0.8);
			\node at (\size/2+1,\size+1){$C$};
			
			\draw[ABCD]
			(\size-2+0.6,1.6) rectangle ++(0.8,\size-2-0.1);
			\node at (\size+1,\size/2+2.5) (B) {$B$};
			
			\draw[linkage] (6,10)--(6,6)--(10,6)--(10,4)--(6,4)--(6,1);
			\draw[linkage] (1,5)--(9,5);
			
			\foreach \r in {1,...,\size}{
				\foreach \c in {1,...,\size}{
					\node[hvertex] (x\r_\c) at (\c,\r){};
				}
				
			}
			\pgfmathsetmacro{\sizeM}{\size-1}
			\foreach \r in {1,...,\size}{
				\foreach \c in {1,...,\sizeM}{
					\pgfmathtruncatemacro{\cc}{\c+1}
					\draw[hedge] (x\r_\c) to (x\r_\cc);
				}	
			}
			\pgfmathsetmacro{\sizeM}{\size-1}
			\foreach \r in {1,...,\sizeM}{
				\pgfmathtruncatemacro{\rr}{\r+1}
				\foreach \c in {1,...,\size}{
					\draw[hedge] (x\r_\c) to (x\rr_\c);
				}	
			}

\draw[pointer, bend left=30] (B.south) to (\size-0.6,\size/2+1.5);
			\end{tikzpicture}
			\caption{An $4r \times 4r$ grid that neither contains two disjoint ($A$--$B$, $C$--$D$)-linkages nor a set of size of most $r$ meeting every ($A$--$B$, $C$--$D$)-linkage.}
			\label{fig:grid}
		\end{figure}
	
		Let $G$ be a $4r\times4r$-grid and 
		let the sets $A,B,C,D$ be chosen as in Figure~\ref{fig:grid}.
		It is easy to check that no set of at most $r$ vertices  intersects every linkage.
		Suppose that the graph contains two disjoint ($A$--$B$, $C$--$D$)-linkages.
		Then these two linkages contain two disjoint $C$--$D$-paths
		but  
		at most one of them can contain a vertex of the rightmost column
		of the grid.
		However, every $C$--$D$-path that does not contain a vertex of the rightmost column separates $A$ and $B$,
		and thus meets every $A$--$B$-path.
		Therefore, the graph does not contain two disjoint linkages.
We obtain:
\begin{proposition}
($A$--$B$, $C$--$D$)-linkages do not have the \EP.
\end{proposition}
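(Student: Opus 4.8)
The plan is to contradict the definition of the \EP{} directly. Suppose that ($A$--$B$, $C$--$D$)-linkages have the \EP, witnessed by a function $f\colon\N\to\R$, and set $r:=\lceil f(2)\rceil$. It then suffices to produce one graph $G$ that has no two vertex-disjoint ($A$--$B$, $C$--$D$)-linkages and also no set of at most $r$ vertices meeting every such linkage; applying the property to $G$ with $k=2$ gives the contradiction. For $G$ I would take the $4r\times4r$-grid, with $A,B,C,D$ placed exactly as in Figure~\ref{fig:grid}: $C$ and $D$ are the top and bottom rows, $A$ is (the non-corner part of) the leftmost column, and $B$ is the corresponding segment of the \emph{second}-to-last column. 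The design point is that $B$ sits one column in from the right boundary, so the leftover rightmost column offers a single ``bypass'' on the right, while $A$ lying on the left boundary means there is no bypass on the left.

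Two claims remain. The first, that $G$ admits no hitting set of size at most $r$, is an easy check: the obstruction is built so that, as Figure~\ref{fig:grid} indicates, there are far too many and too widely spread linkages for $r$ vertices to cover them all. The second claim, that $G$ has no two vertex-disjoint linkages, carries the content. Suppose $L_1=P_1\cup Q_1$ and $L_2=P_2\cup Q_2$ were such a pair, with each $P_i$ an $A$--$B$-path, each $Q_i$ a $C$--$D$-path, and all four paths pairwise disjoint. Then $Q_1$ and $Q_2$ are disjoint paths from the top row to the bottom row; drawing $G$ in the plane, such a pair is non-crossing along the bounding rectangle and hence nested, and the rightmost column, which meets both right corners of the grid, lies inside the region cut off by the inner of the two paths. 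Consequently at most one of $Q_1,Q_2$, say $Q_1$, can meet the rightmost column, so $Q_2$ is confined to the columns strictly left of it. But a top-to-bottom path confined to those columns separates the leftmost column from the second-to-last column in all of $G$: there is no room to pass around it above (the top row) or below (the bottom row), and the only lateral route, through the rightmost column, is cut by $Q_2$ itself. Hence $Q_2$ separates $A$ from $B$ in $G$, so $G-V(Q_2)$ contains no $A$--$B$-path at all; but $P_1$ and $P_2$ are $A$--$B$-paths disjoint from $Q_2$, a contradiction. Thus $G$ has no two disjoint linkages.

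The step I expect to be the main obstacle is making this last paragraph fully rigorous purely in terms of the grid's combinatorics: precisely, that two disjoint $C$--$D$-paths cannot both use the rightmost column, and that a $C$--$D$-path not using it genuinely separates $A$ from $B$. The picture is convincing, but the clean statements depend on exactly where the terminals sit, with $A$ flush against the first column and $B$ tucked behind the last column, and the separator claim is most safely handled through a Jordan-curve-type argument (a path from the top row to the bottom row of an $m\times n$ grid separates its first column from its last), adjusted for the fact that $B$ occupies column $n-1$ rather than column $n$. Once both points are settled, letting $r$ range over $\N$ shows that no single $f$ can work, and therefore ($A$--$B$, $C$--$D$)-linkages do not have the \EP.
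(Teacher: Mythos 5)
Your proposal is correct and follows essentially the same route as the paper: the same $4r\times 4r$ grid with the same placement of $A,B,C,D$, the observation that at most one of two disjoint $C$--$D$-paths can use the rightmost column, and that a $C$--$D$-path avoiding that column separates $A$ from $B$. The extra detail you give (nesting of the two paths, the Jordan-curve-type separation argument) only fleshes out what the paper asserts without proof.
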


Now we consider edge-disjoint linkages. 
By replacing the grid by a wall\footnote{For a formel definition of a wall, see for example~\cite{HJW16}.} in Figure~\ref{fig:grid} 
we immediately see that linkages do not have the edge-\EP\ either.
To prove Theorem~\ref{mainthm}, however, we need a stronger statement. 

As 
a graph of large enough treewidth contains an $H$-expansion of any 
planar graph $H$, we cannot use the  construction of Figure~\ref{fig:grid},
which has large treewidth. Rather, we modify the construction in such a way
that the resulting graph has even small pathwidth, but still shows 
that linkages do not have the edge-\EP.

We simplify a bit and consider linkages between single-vertex sets. 
That is,  we are interested in 
\emph{($a$--$b$, $c$--$d$)-linkages} for vertices $a,b,c,d$, the disjoint union
of an $a$--$b$-path with an $c$--$d$-path.

Let $r$ be a positive integer. 
A \emph{condensed wall} $W$ of size $r$ is defined as follows (see Figure~\ref{fig:condWall} for an illustration):
\begin{itemize}
	\item For every $j\in [r]$, let $P^j=u^j_1,\ldots, u^j_{2r}$ be a path of length $2r-1$ and 
	for $j\in \{0\}\cup [r]$, let $z^j$ be a vertex. Moreover, let $a,b$ two further vertices.
	\item For every $i,j\in [r]$, add the edges $z^{j-1}u^{j}_{2i}$, $z^ju^{j}_{2i-1}$, $z^{i-1}z^i$, $au^j_1$ and $bu^j_{2r}$.
\end{itemize}
We define $c=z^0$ and $d=z^r$ and refer to
 $$W_j=W[\{u_1^j,\ldots, u_{2r}^jz^{j-1},z^j\}]$$
as the \emph{$j$-th layer of $W$}.
The vertices $a,b,c,d$ are called the \emph{terminals} of $W$.

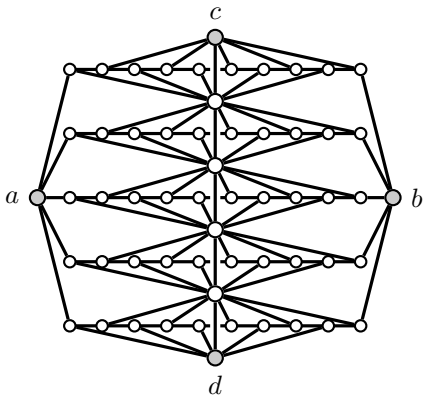
\begin{figure}[bht]
	\centering
		\begin{tikzpicture}[scale=0.85]
			\tikzstyle{tinyvx}=[thick,circle,inner sep=0.cm, minimum size=1.5mm, fill=white, draw=black]
			
			\def\vstep{1}
			\def\hstep{0.5}
			\def\hwidth{9}
			\def\hheight{4}
			
			\def\totalheight{\hheight*\vstep}
			\def\totalwidth{\hwidth*\hstep}
			\pgfmathtruncatemacro{\minustwo}{\hwidth-2}
			\pgfmathtruncatemacro{\minusone}{\hwidth-1}
			
			\foreach \j in {0,...,\hheight} {
				\draw[hedge] (0,\j*\vstep) -- (\hwidth*\hstep,\j*\vstep);
				\foreach \i in {0,...,\hwidth} {
					\node[tinyvx] (v\i\j) at (\i*\hstep,\j*\vstep){};
				}
			}
			
			\foreach \j in {1,...,\hheight}{
				\node[hvertex] (z\j) at (0.5*\hwidth*\hstep,\j*\vstep-0.5*\vstep) {};
			}
			\pgfmathtruncatemacro{\plusvone}{\hheight+1}
			
			\node[hvertex,fill=hellgrau,label=above:$c$] (z\plusvone) at (0.5*\totalwidth,\totalheight+0.5*\vstep) {};
			\node[hvertex,fill=hellgrau,label=below:$d$] (z0) at (0.5*\totalwidth,-0.5*\vstep) {};

			\foreach \j in {1,...,\plusvone}{
				\pgfmathtruncatemacro{\subone}{\j-1}
				\draw[line width=1.3pt,double distance=1.2pt,draw=white,double=black] (z\j) to (z\subone);
				\foreach \i in {1,3,...,\hwidth}{
					\draw[hedge] (z\j) to (v\i\subone);
				}
			}
			
			\foreach \j in {0,...,\hheight}{
				\foreach \i in {0,2,...,\hwidth}{
					\draw[hedge] (z\j) to (v\i\j);
				}
			}

			\pgfmathtruncatemacro{\minusvone}{\hheight-1}
			\node[hvertex,fill=hellgrau,label=left:$a$] (a) at (-\hstep,0.5*\totalheight) {};
			\foreach \j in {0,...,\hheight} {
				\draw[hedge] (a) -- (v0\j);
			}
			
			\node[hvertex,fill=hellgrau,label=right:$b$] (b) at (\totalwidth+\hstep,0.5*\totalheight) {};
			\foreach \j in {0,...,\hheight} {
				\draw[hedge] (v\hwidth\j) to (b);
			}
			\end{tikzpicture}
			\caption{A condensed wall of size 5.}
			\label{fig:condWall}
		\end{figure}

Condensed walls have their origin in the construction of Figure~\ref{fig:grid}.
If we replace the grid by a wall, 
contract each of $A,B,C,D$ to a single vertex, and then contract every 
second row of the wall we arrive at a graph that is basically equivalent 
to a condensed wall. 

We continue with a few observations about condensed walls.
Let $W$ be a condensed wall of size $r$.
Then $W-a-b$ is easily seen to have pathwidth (and hence treewidth) at most~$3$.
Therefore, $W$ has pathwidth at most $5$.

\begin{observation}\label{wallTreewidth}
A condensed wall (of any size) has pathwidth at most~$5$.
\end{observation}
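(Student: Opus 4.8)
The plan is to exhibit an explicit path decomposition of $W$ of width at most $5$, or equivalently, since the paper has already observed that $W-a-b$ has pathwidth at most $3$, to first establish that bound and then add $a$ and $b$ to every bag, which raises the width by $2$. So the real task is to show $\pw(W-a-b)\le 3$, i.e. to produce a sequence of bags of size at most $4$ covering all vertices and edges of $W-a-b$ and satisfying the interpolation property. I would proceed layer by layer, exploiting the fact that in $W-a-b$ the only vertices shared between the $j$-th layer $W_j$ and the rest of the graph are $z^{j-1}$ and $z^j$ (the $z$-path threads through all layers, while each path $P^j$ together with its private edges to $z^{j-1},z^j$ lives entirely inside $W_j$).

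Concretely, I would scan $W_j$ from $u^j_1$ to $u^j_{2r}$, keeping $z^{j-1}$ and $z^j$ permanently in the bag, and sliding a window of two consecutive path-vertices $u^j_{t},u^j_{t+1}$ along $P^j$. A bag of the form $\{z^{j-1},z^j,u^j_t,u^j_{t+1}\}$ has size $4$ and, as $t$ runs from $1$ to $2r-1$, these bags cover every path-edge $u^j_tu^j_{t+1}$, every rung-edge $z^{j-1}u^j_{2i}$ and $z^ju^j_{2i-1}$, and (at the transition between layers, via a bag $\{z^{j-1},z^j,z^{j+1}\}$ or by keeping $z^j$ in the bag as we move from layer $j$ to layer $j+1$) the edge $z^{j-1}z^j$. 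Concatenating these windows across $j=1,\dots,r$ gives a path decomposition of $W-a-b$: each path-vertex $u^j_t$ appears in the (contiguous) block of bags for layer $j$ only, and each $z^j$ appears contiguously across the end of block $j$ and the start of block $j+1$, so the interpolation property holds. This yields $\pw(W-a-b)\le 3$.

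Finally, inserting $a$ and $b$ into every bag of this decomposition produces bags of size at most $6$, hence a path decomposition of $W$ of width at most $5$; it is still a valid decomposition because $a$ and $b$ (with their edges to the various $u^j_1$ and $u^j_{2r}$) now appear in every bag, and the interpolation property for all other vertices is unaffected. This holds regardless of $r$, giving the claimed bound for condensed walls of any size.

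I do not expect a genuine obstacle here: the argument is a routine, if slightly fiddly, bookkeeping exercise. The only point requiring a little care is the handling of the layer transitions — making sure the edge $z^{i-1}z^i$ is covered and that each $z^j$ occupies a contiguous run of bags — which is cleanly resolved by never dropping $z^j$ from the bag until we have finished with layer $j+1$. If one preferred to avoid writing out the decomposition explicitly, an alternative would be to note that $W-a-b$ is obtained from a subdivided wall of height-2-ish structure by contractions and that such graphs are known to have bounded pathwidth, but giving the bags directly is both shorter and self-contained.
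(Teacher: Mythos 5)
Your proposal is correct and follows the same route as the paper: bound the pathwidth of $W-a-b$ by $3$ and then add $a,b$ to every bag, costing $2$. The paper simply asserts that $\pw(W-a-b)\le 3$ is ``easily seen''; your sliding-window bags $\{z^{j-1},z^j,u^j_t,u^j_{t+1}\}$ are a valid explicit witness for that assertion (and the edge $z^{j-1}z^j$ is in fact already covered by every layer-$j$ bag, so no special transition bag is needed).
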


Next, we  consider an ($a$--$b$, $c$--$d$)-linkage in $W$.
Observe that in any ($a$--$b$, $c$--$d$)-linkage the $a$--$b$-path is of the form $aP^jb$
as $\{a,b,z^j\}$ separates $c$ and $d$ and thus $\{z^0,\ldots,z^{r}\}$ is disjoint from the $a$--$b$-path\footnote{For a tree $T$ and $a,b\in V(T)$, there is a unique $a$--$b$-path in $T$ and this path is denoted by~$aTb$.}.
Moreover,
for any $j$,
every $c$--$d$-path in $W-E(aP^jb)$ that avoids $a,b$ contains the edge $z^{j-1}z^{j}$,
which shows that there do not exist two edge-disjoint linkages.

\begin{observation}\label{trees:wallNoTwo}
A condensed wall $W$ does not contain two edge-disjoint $(a$--$b,c$--$d)$-linkages.
\end{observation}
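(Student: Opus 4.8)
The plan is to reason directly about the structure of $(a$--$b,c$--$d)$-linkages in $W$, exactly along the lines already sketched in the paragraph preceding the statement. First I would observe that in \emph{any} $(a$--$b,c$--$d)$-linkage $L$, the $a$--$b$-path must be one of the paths $aP^jb$ for some $j\in[r]$. Indeed, all neighbours of $a$ lie in $\{u^1_1,\dots,u^r_1\}$ and all neighbours of $b$ lie in $\{u^1_{2r},\dots,u^r_{2r}\}$, and the only edges not incident to some $z^i$ are the edges of the paths $P^1,\dots,P^r$ together with the $a$- and $b$-edges; hence an $a$--$b$-path that avoids the set $Z:=\{z^0,\dots,z^r\}$ can only run along a single $P^j$. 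To see that it must avoid $Z$: the $c$--$d$-path of $L$ joins $c=z^0$ to $d=z^r$, and for any fixed $j$ the set $\{a,b,z^j\}$ separates $c$ from $d$ in $W$ (removing $a,b$ leaves only the edges among $Z$ and the edges from $Z$ to the $P^i$; every path from $z^0$ to $z^r$ in that remaining graph must pass through $z^j$). Since the $c$--$d$-path is vertex-disjoint from the $a$--$b$-path, the $a$--$b$-path cannot meet any $z^j$, so it is contained in some $P^j$ and therefore equals $aP^jb=au^j_1\cdots u^j_{2r}b$.

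Next, fixing this index $j$, I would show that every $c$--$d$-path in $W$ that avoids $a,b$ and is edge-disjoint from $E(aP^jb)$ must use the edge $z^{j-1}z^j$. Consider the graph $W':=W-a-b-E(P^j)$. In $W'$, the layer-$j$ path $P^j$ has lost all its internal edges, so its vertices $u^j_1,\dots,u^j_{2r}$ survive only as pendant-type attachments to $z^{j-1}$ and $z^j$; they cannot help route a path between two \emph{other} $z$'s. Concretely, deleting the edge $z^{j-1}z^j$ from $W'$ disconnects $\{z^0,\dots,z^{j-1}\}\cup(\text{their }P^i\text{-appendages, }i\le j-1)$ from $\{z^j,\dots,z^r\}\cup(\text{their }P^i\text{-appendages, }i\ge j+1)$: the only edges crossing between ``low index'' and ``high index'' vertices are the $z$-path edges $z^{i-1}z^i$, and for $i\neq j$ a crossing via $z^{i-1}z^i$ still leaves one on the low or high side, while the $P^i$ for $i\neq j$ attach entirely within one side. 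Hence $z^{j-1}z^j$ is a bridge between $c=z^0$ and $d=z^r$ in $W'$, so every $c$--$d$-path avoiding $a,b$ and $E(P^j)$ contains it.

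Finally I would combine the two facts. Suppose for contradiction that $W$ contains two edge-disjoint linkages $L_1,L_2$. By the first step, the $a$--$b$-path of $L_i$ is $aP^{j_i}b$ for some $j_i\in[r]$. The $c$--$d$-path $Q_i$ of $L_i$ is vertex-disjoint from $aP^{j_i}b$, hence in particular avoids $a,b$ and is edge-disjoint from $E(P^{j_i})$; by the second step $Q_i$ contains the edge $z^{j_i-1}z^{j_i}$. If $j_1=j_2$ then $L_1$ and $L_2$ share the whole path $aP^{j_1}b$ — not edge-disjoint. If $j_1\neq j_2$, then $Q_1$ contains $z^{j_1-1}z^{j_1}$, which is an edge of $P^{j_2}$ only if... no: $z^{j_1-1}z^{j_1}$ is a $z$-path edge, not a $P$-edge. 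The genuine conflict instead comes from noting that $Q_1$, being vertex-disjoint from $aP^{j_1}b$, must avoid every vertex $u^{j_1}_\ell$; in particular $Q_1$ cannot use the edges $z^{j_1-1}u^{j_1}_{2i}$ or $z^{j_1}u^{j_1}_{2i-1}$, so the only way for $Q_1$ to pass ``through layer $j_1$'' is via the single edge $z^{j_1-1}z^{j_1}$, and likewise $Q_2$ must pass through layer $j_2$ via $z^{j_2-1}z^{j_2}$ but \emph{also} through every other layer $j\ne j_2$, and the only layer-$j_1$ crossing available to $Q_2$ that is vertex-disjoint from nothing problematic is again $z^{j_1-1}z^{j_1}$ — forcing $z^{j_1-1}z^{j_1}\in E(Q_1)\cap E(Q_2)$, contradicting edge-disjointness. (Here one uses that $Q_2$, to get from $z^0$ to $z^r$, must cross the ``cut after layer $j_1$''; the available crossing edges are $z^{j_1-1}z^{j_1}$ and the $z$–$P^{j_1}$ edges, and once we know $Q_2$'s $a$--$b$-partner is $P^{j_2}$ with $j_2\neq j_1$ this still allows $Q_2$ to touch $u^{j_1}_\ell$; the clean statement to prove is simply the second step applied with index $j_1$ but without the ``edge-disjoint from $E(P^{j_1})$'' hypothesis being needed — a $c$--$d$-path vertex-disjoint from $au^{j_1}_1\cdots u^{j_1}_{2r}b$ lies in $W-\{u^{j_1}_1,\dots,u^{j_1}_{2r}\}$, in which $z^{j_1-1}z^{j_1}$ is a bona fide bridge between $c$ and $d$.) This yields the contradiction and proves that no two edge-disjoint linkages exist.

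The main obstacle I anticipate is purely bookkeeping: making the bridge/separator claims of the second step fully rigorous, i.e.\ verifying carefully that after removing the layer-$j$ interior vertices (or the path $E(P^j)$ plus $a,b$) the edge $z^{j-1}z^j$ genuinely separates $c$ from $d$. This is a routine connectivity check on the explicit edge set of $W$ — the crossing edges between $\{z^0,\dots,z^{j-1}\}$-side and $\{z^j,\dots,z^r\}$-side are enumerable directly from the definition — but it must be done without hand-waving, since the whole argument rests on it. Everything else (the shape of the $a$--$b$-path, the final combinatorial contradiction) then follows quickly.
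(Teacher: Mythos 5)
Your proposal is correct and follows essentially the same route as the paper's own argument: the $a$--$b$-path of any linkage must be $aP^jb$ because $\{a,b,z^j\}$ separates $c$ from $d$, and every $c$--$d$-path avoiding $a,b$ and edge-disjoint from $E(aP^jb)$ must use the edge $z^{j-1}z^j$, which rules out two edge-disjoint linkages. The only wobble is the final combination: instead of the vertex-disjoint variant in your parenthetical (which covers only $Q_1$), just apply your second step with the single index $j_1$ to \emph{both} $c$--$d$-paths --- $Q_2$ qualifies since it avoids $a,b$ and is edge-disjoint from $L_1\supseteq aP^{j_1}b$ --- so both contain $z^{j_1-1}z^{j_1}$, the desired contradiction.
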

		
Suppose $X$ is a set of at most $r-1$ edges in $W$.
Then there exists some $j\in [r]$ such that $aP^jb$ and the $j$-th layer $W_j$ are edge-disjoint from $X$.
Moreover, for all $i\in [r]$, the vertices $z^{i-1}$ and $z^{i}$ belong to the same component in $W-X-aP^jb$.
Therefore, $W-X$ contains an $(a$--$b,c$--$d)$-linkage.

\begin{observation}\label{wallNoHit}
Suppose $W$ is a condensed wall of size $r$ and let $X$ be a set of at most $r-1$ edges.
Then, $W-X$ contains an $(a$--$b,c$--$d)$-linkage.
\end{observation}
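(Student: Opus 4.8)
The plan is to follow the outline sketched in the paragraph preceding the statement, supplying the bookkeeping. First I would pick a ``clean'' layer. Observe that the $r$ edge sets $E(aP^jb)\cup E(W_j)$, $j\in[r]$, are pairwise disjoint: for distinct $j$ the path edges of the $P^j$ lie on disjoint vertex sets, the edges $z^{j-1}u^j_{2i}$ and $z^ju^j_{2i-1}$ involve $u^j$-vertices and are hence particular to layer $j$, the edge $z^{j-1}z^j\in E(W_j)$ differs from $z^{j'-1}z^{j'}\in E(W_{j'})$ whenever $j\neq j'$, and the pendant edges $au^j_1,bu^j_{2r}$ are also particular to $j$. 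Since $|X|\le r-1<r$, there is some $j\in[r]$ with $\big(E(aP^jb)\cup E(W_j)\big)\cap X=\emptyset$; fix this $j$. Then $aP^jb$ is a genuine $a$--$b$-path of $W-X$, and in particular the edge $z^{j-1}z^j$ survives in $W-X$.

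Next I would produce a $c$--$d$-path of $W-X$ that is vertex-disjoint from $aP^jb$. Set $H:=(W-X)-V(aP^jb)=(W-X)-\{a,b,u^j_1,\dots,u^j_{2r}\}$; it suffices to show that $c=z^0$ and $d=z^r$ lie in the same component of $H$, and for this I would show that $z^{i-1}$ and $z^i$ are joined in $H$ for every $i\in[r]$, distinguishing two cases. If $i=j$, the edge $z^{j-1}z^j$ lies in $E(W_j)$, hence avoids $X$, and both its endpoints survive in $H$, so $z^{i-1}$ and $z^i$ are joined. If $i\neq j$, then none of $u^i_1,\dots,u^i_{2r}$ is deleted, and for each $k\in[r]$ the three-edge path $z^{i-1},u^i_{2k},u^i_{2k-1},z^i$ — using the edges $z^{i-1}u^i_{2k}$, $u^i_{2k}u^i_{2k-1}$ and $u^i_{2k-1}z^i$, all of which exist by the construction — lies inside $H$ except possibly where it meets $X$; these $r$ paths are pairwise edge-disjoint, so since $|X|\le r-1$ at least one survives intact in $H$, joining $z^{i-1}$ to $z^i$. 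Chaining these connections along $i=1,\dots,r$ shows $z^0$ and $z^r$ lie in one component of $H$, so $H$, and hence $W-X$, contains a $c$--$d$-path $R$. By construction $R$ avoids $a,b$ and all internal vertices of $P^j$, so $R$ and $aP^jb$ are vertex-disjoint, and their union is an $(a$--$b,c$--$d)$-linkage of $W-X$.

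The only genuinely delicate points are the two pieces of bookkeeping: that the ``layer plus pendant edges'' sets are pairwise edge-disjoint, so that the pigeonhole really yields a layer $j$ with $aP^jb$ intact; and that in the case $i=j$ — where deleting $V(aP^jb)$ destroys all the parallel three-edge detours inside layer $j$ — the direct edge $z^{j-1}z^j$ is precisely the one guaranteed to survive by the choice of $j$. Everything else is routine verification against the definition of the condensed wall, so I expect no real obstacle beyond keeping the index arithmetic straight.
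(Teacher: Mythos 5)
Your proposal is correct and follows essentially the same route as the paper: a pigeonhole choice of a layer $j$ with $aP^jb$ and $W_j$ edge-disjoint from $X$, followed by the observation that $z^{i-1}$ and $z^i$ remain connected in $W-X-aP^jb$ for every $i$ (via the surviving edge $z^{j-1}z^j$ when $i=j$ and via one of the $r$ edge-disjoint detours through $P^i$ when $i\neq j$). The extra bookkeeping you supply (disjointness of the sets $E(aP^jb)\cup E(W_j)$ and the case split on $i$) is exactly the detail the paper leaves implicit.
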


\section{Long ladders}\label{sec:ladder}

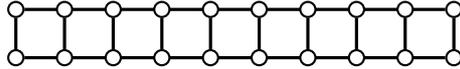
\begin{figure}[bht]
\centering
\begin{tikzpicture}[scale=0.8]

\def\runglen{0.8cm}
\def\ladderlen{10}

\foreach \i in {1,...,\ladderlen}{
  \node[hvertex] (a\i) at (\i*\runglen,0){};
  \node[hvertex] (b\i) at (\i*\runglen,\runglen){};
  \draw[hedge] (a\i) -- (b\i);
}
\begin{scope}[on background layer]
\draw[hedge] (a1) -- (a\ladderlen);
\draw[hedge] (b1) -- (b\ladderlen);
\end{scope}

\end{tikzpicture}
\caption{A ladder of length~$10$}\label{ladderfig}
\end{figure}
	A \emph{ladder} 
 is a graph $L$ with vertex set $V(L)=\{u_i,v_i: i\in [\ell]\}$ and
	edge set  $E(L)=\{u_iu_{i+1},v_iv_{i+1}: i\in [\ell-1]\}\cup \{u_iv_i:i\in [\ell]\}$ for some $\ell\in \N$.
	The edges $u_iv_i$ are called the \emph{rungs} of the ladder.
	We define the \emph{length} of a ladder as the number of its rungs.
	We prove the following, which implies Theorem~\ref{mainthm}~(i):

	\begin{theorem}\label{ladders:thm}
	Let $H$ be a ladder of length at least 71. 
	Then the subdivisions of~$H$ do not have the edge-Erd\H{o}s-P\'osa property.
	\end{theorem}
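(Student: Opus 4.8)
The plan is to establish, for every positive integer $r$, the existence of an \emph{obstruction graph} $G_r$ that certifies the failure of the edge-\EP\ for subdivisions of $H$: concretely, $G_r$ should satisfy (a) $G_r$ contains no two edge-disjoint subdivisions of $H$, and (b) for every edge set $X\subseteq E(G_r)$ with $|X|\le r-1$ the graph $G_r-X$ still contains a subdivision of $H$. Letting $r\to\infty$, these two properties immediately give Theorem~\ref{ladders:thm}. (Working with subdivisions rather than with $H$-expansions is natural here and also slightly stronger for the applications: as $H$ is subcubic, an $H$-minor-model can always be thinned to an $H$-subdivision using only its own edges, so property~(a) also prevents two edge-disjoint $H$-expansions.) We will moreover keep the pathwidth of $G_r$ bounded, as for the condensed walls of Section~\ref{sec:fromK4toLinkages}.

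For the construction I would start from a condensed wall $W$ of size~$r$, but --- in contrast to the linkage setting --- a plain condensed wall is not enough: it does not even contain long ladders (there is no room for many pairwise-disjoint rungs between two long paths), whereas $G_r$ must contain subdivisions of $H$. I would therefore \emph{inflate} $W$ into a ladder-carrying, wall-like graph: thicken the relevant paths $P^j$ into short ladder segments so that an $(a$--$b,c$--$d)$-linkage of $W$ is replaced by a routing that threads a long ladder through the layers, and attach at the four terminals some bounded-size \emph{cap gadgets} --- ladder fragments that are individually too short to contain $H$ but that are completed to a subdivision of $H$ precisely by such a routing. To make property~(b) robust against deletions that hit the caps, one uses several parallel copies of each cap. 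Property~(b) then follows the pattern of Observation~\ref{wallNoHit}: for $|X|\le r-1$ some layer and some copy of each cap survive, and assembling the surviving pieces with a routing through the surviving layer yields a subdivision of $H$ in $G_r-X$; the pathwidth bound follows the pattern of Observation~\ref{wallTreewidth}, since inflating paths and adding bounded-size caps raises pathwidth only by an additive constant.

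The heart of the proof, and the step I expect to be the main obstacle, is property~(a). The plan is to prove a structural lemma stating that every subdivision $S$ of $H$ in $G_r$ "induces" an $(a$--$b,c$--$d)$-linkage of the underlying condensed wall, in such a way that two edge-disjoint subdivisions $S_1,S_2$ would induce two edge-disjoint linkages, contradicting Observation~\ref{trees:wallNoTwo}. The two key sub-claims are: (i) the inflated wall \emph{without} the caps contains no ladder subdivision of length exceeding some absolute constant --- because within any bounded number of layers there is not enough width to host many pairwise-disjoint rungs joining two long paths --- so that, since $\ell\ge 71$, the subdivision $S$ must use a cap and hence cross the wall; and (ii) the only crossings available to a long ladder subdivision are of linkage type, as a degree-$3$ branch vertex of $H$ cannot be placed in the "thin" parts of the wall and the rails are forced through the layers two at a time. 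Making (i) and (ii) rigorous against all the exotic routings permitted by the wall's spine $z^0z^1\cdots z^r$, its many intra-layer edges, and the parallel cap copies is exactly the technically involved part flagged in the introduction. Once this is done, the constant~$71$ is obtained simply by adding up the lengths of the caps and the constant from~(i), and we make no attempt to optimise it.
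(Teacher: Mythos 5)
Your high-level shell matches the paper's: an obstruction graph built from a condensed wall of size $r$ with gadgets at the four terminals, robustness against $\le r-O(1)$ edge deletions via the $r$ layers and parallel copies (as in Observation~\ref{wallNoHit}), and a reduction of ``no two edge-disjoint $H$-subdivisions'' to Observation~\ref{trees:wallNoTwo}. But the central step of your plan has a genuine gap, and it starts from a mistaken premise. You argue that the plain condensed wall must be inflated because $G_r$ has to contain $H$-subdivisions; in fact the wall can (and in the paper does) stay completely untouched. The trick you are missing is the attachment scheme: delete the two rungs $u_{24}v_{24}$ and $u_{48}v_{48}$ from the ladder, attach inflations of the three resulting subladders (each of length about $23$, so none contains $H$) to the terminals via $\epsilon(u_{24})=a$, $\epsilon(v_{24})=b$, $\epsilon(v_{48})=c$, $\epsilon(u_{48})=d$. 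Then the body of the ladder lives entirely outside $W$, and the wall only has to supply the two deleted rungs, i.e.\ exactly an $(a$--$b,c$--$d)$-linkage --- no ``threading of a long ladder through the layers'' is needed for property (b).

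Your thickening of the paths $P^j$ into ladder segments is not only unnecessary, it undermines the very step you rely on. Observation~\ref{trees:wallNoTwo} works because each layer is thin: the $a$--$b$-path of a linkage must be a whole $aP^jb$, and every edge-disjoint $c$--$d$-path avoiding $a,b$ is then forced through the single spine edge $z^{j-1}z^j$. Once you duplicate edges inside a layer, two edge-disjoint $H$-subdivisions of $G_r$ can route through the \emph{same} layer on disjoint edge sets of the thickening, so they do not ``induce'' two edge-disjoint linkages of the underlying wall, and the contradiction with Observation~\ref{trees:wallNoTwo} evaporates; you would have to prove a new no-two-routings statement for the modified wall from scratch. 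Moreover your sub-claims (i) and (ii) --- no long ladder in the wall-part, and all crossings are ``of linkage type'' --- are precisely the content of the theorem and are left as declarations. The paper's substitute for them is concrete: Lemma~\ref{ladders:seventeen} (each layer has no three disjoint cycles, so $W-\{a,b\}$ contains no ladder subdivision of length $6$), the counts that $W-T$ carries at most $20$ rungs and each attached piece at most $23$, and the component analysis of $\gamma(L)-T$ forcing $\gamma(U)=\epsilon(U)$, $\{\gamma(u_{24}),\gamma(v_{24})\}=\{a,b\}$, $\{\gamma(u_{48}),\gamma(v_{48})\}=\{c,d\}$ and $\gamma(u_{24}v_{24}),\gamma(u_{48}v_{48})\subseteq W$. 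Without an argument of this kind for your modified wall, property (a) is unproven.
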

	
	Observe that whenever $H$ has maximum degree at most $3$, then a graph contains $H$ as a minor if and only if it contains a subdivision of $H$.
	Before we prove Theorem \ref{ladders:thm}, we note a small observation about ladders that are contained in a condensed wall.
	\begin{lemma}\label{ladders:seventeen}
		Let $W$ be a condensed wall of any size and with terminals $a,b,c,d$.
Then, $W-\{a,b\}$ does not contain any  subdivision of a ladder of length 6.
	\end{lemma}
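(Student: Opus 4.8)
Suppose, for contradiction, that $W-\{a,b\}$ contains a subgraph $S$ that is a subdivision of the ladder $H$ of length~$6$; write $Z=\{z^0,\ldots,z^r\}$ for the spine of $W$. I will prove the lemma in two steps: first that $|V(S)\cap Z|\le 2$, and then that this leads to a contradiction. Two structural facts about $W-\{a,b\}$ drive the first step. Since every edge of $W-\{a,b\}$ incident with a vertex of $P^j$ has its other endpoint in $V(P^j)\cup\{z^{j-1},z^j\}$, each vertex $z^b$ with $1\le b\le r-1$ is a cut vertex of $W-\{a,b\}$, and deleting it separates $\{z^0,\ldots,z^{b-1}\}\cup\bigcup_{i\le b}V(P^i)$ from $\{z^{b+1},\ldots,z^r\}\cup\bigcup_{i>b}V(P^i)$. (Equivalently: the blocks of $W-\{a,b\}$ are exactly the graphs $W[\{z^{j-1},z^j\}\cup V(P^j)]$, and they are arranged along a path.) Secondly, $(W-\{a,b\})-Z$ is the disjoint union of the paths $P^1,\ldots,P^r$, hence a forest.

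For the first step proper, recall that $H$ is $2$-connected, hence so is $S$. If $S$ contained three spine vertices $z^{a_1},z^{a_2},z^{a_3}$ with $a_1<a_2<a_3$, then $1\le a_2\le r-1$, so $z^{a_2}$ separates $z^{a_1}$ from $z^{a_3}$ in $W-\{a,b\}$; as $S$ is a subgraph of $W-\{a,b\}$ that contains all three of these vertices, $z^{a_2}$ would then also separate $z^{a_1}$ from $z^{a_3}$ in $S$, contradicting $2$-connectivity. Hence $|V(S)\cap Z|\le 2$. For the second step, note that $H$ contains the five quadrilaterals $Q_i=u_iu_{i+1}v_{i+1}v_i$ for $i\in[5]$, and that each $Q_i$ corresponds to a cycle $C_i$ of $S$, namely the concatenation of the four paths of $S$ that subdivide the four edges of $Q_i$. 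A routine check shows that each vertex of $H$ lies in at most two of the $Q_i$, and each edge of $H$ lies in at most two of the $Q_i$; since every vertex of $S$ is either a branch vertex (corresponding to a vertex of $H$) or an internal vertex of one of the subdividing paths (corresponding to an edge of $H$), it follows that every vertex of $S$ lies on at most two of the cycles $C_1,\ldots,C_5$. Consequently the at most two vertices of $V(S)\cap Z$ meet at most four of these cycles, so some $C_i$ avoids $Z$ altogether; but then $C_i$ is a cycle contained in $S-Z\subseteq (W-\{a,b\})-Z$, which is a forest --- a contradiction.

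The only step that genuinely needs care is the localisation $|V(S)\cap Z|\le 2$: one has to observe that the spine of $W$ is a chain of cut vertices of $W-\{a,b\}$, so that the $2$-connected graph $S$ is trapped inside a single ``layer block'' $W[\{z^{j-1},z^j\}\cup V(P^j)]$, and such a block becomes a path once its two spine vertices are removed. Everything after that is just the elementary observation that a subdivision of a length-$6$ ladder cannot be made acyclic by deleting only two vertices, because its five quadrilaterals cannot all be met by a set of two vertices.
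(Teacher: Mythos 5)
Your proof is correct, and it follows essentially the same route as the paper's: $2$-connectivity of the subdivision is used to confine its interaction with the spine (you bound $|V(S)\cap Z|\le 2$ via cut vertices, while the paper localises $S$ into a single layer, i.e.\ a block of $W-\{a,b\}$), after which the abundance of cycles in a length-$6$ ladder — all of which must meet the spine, since deleting it leaves a forest — gives the contradiction. The only cosmetic difference is the counting at the end: the paper picks three disjoint quadrilaterals that two spine vertices cannot all meet, whereas you count that each vertex lies on at most two of the five quadrilaterals; both work.
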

	\begin{proof}
		Assume for a contradiction that $L$ is a subdivision of a ladder of length at least 6 in $W-\{a,b\}$.
		As $L$ is $2$-connected, $L$ is contained in a block of $W-\{a,b\}$, that is, in one layer $W_i$ of $H$. 
		As every cycle in $W_i$ contains $z^{i-1}$ or $z^{i}$,
the layer  $W_i$ does not contain three disjoint cycles.
A ladder of length~$6$, however, contains three disjoint cycles. 
This is a contradiction.
	\end{proof}
	
	\medskip
	
	\begin{proof}[Proof of Theorem \ref{ladders:thm}]
		We present the proof for the case 
		when the length of the ladder is exactly $71$
		as the proof is almost the same when the length is larger.
		Before we start let us give a short outline of the proof.
For a ladder $L$ of length~$71$, we remove
a rung at one third of the length and one at two thirds of the length
		and split the rest of $L$ into three parts of equal size.
		We glue inflations of these three parts to a large condensed wall to form a graph $G$.
Then, we prove that in every subdivision of $L$ in $G$
the removed rungs form an $(a$--$b,c$--$d)$-linkage in the condensed wall, 
which by construction and Observation~\ref{trees:wallNoTwo} proves the theorem.

\medskip

We start now with the proof.
Let $L$ be a ladder of length 71,
that is, we may write $V(L)=\{u_i,v_i: i\in [71]\}$ and $E(L)=\{u_iu_{i+1},v_iv_{i+1}: i\in [70]\}\cup \{u_iv_i:i\in [71]\}$.
Let $U$ denote all vertices $w$ of $L$ with $d_L(w)=3$,
that is, $U=\{u_i,v_i: 2\le i\le 70\}$.

Let $r\geq 2$ be an arbitrary positive integer. 
We construct a graph $G$ and a map $\epsilon:V(L)\to V(G)$ as follows:
		\begin{itemize}
			\item Start with a condensed wall $W$ of size $r$ with terminals $a,b,c,d$; 
			\item for every vertex $w\in U\setminus \{u_{24},v_{24},u_{48},v_{48}\}$, 
				add a new vertex $x$ to $G$ and set $\epsilon(w)=x$;
			\item set $\epsilon(u_{24})=a$, $\epsilon(v_{24})=b$, $\epsilon(v_{48})=c$, $\epsilon(u_{48})=d$;
			\item for every $U$-path $P$ between two vertices $s,t\in U$ such that $s\notin \{u_{24},v_{24},u_{48},v_{48}\}$,
				create $r$ internally disjoint $\epsilon(s)$--$\epsilon(t)$-paths of 
				length~$3$.
 		\end{itemize}
		We set $T=\{a,b,c,d\}$,
and we observe that $G-T$ has four components, $W-T$ and three others.
We denote by $A$ the component of $G-T$ that contains $\epsilon(u_2)$, 
by $B$ the one that contains $\epsilon(u_{25})$, 
and by $C$ the one that contains $\epsilon(u_{70})$.

\begin{figure}[bht]
\centering
\begin{tikzpicture}[scale=0.8]
\tikzstyle{ded}=[line width=0.8pt,double distance=1.2pt,draw=white,double=black]
\tikzstyle{bubble}=[color=hellgrau,line width=6pt,fill=hellgrau,rounded corners=4pt]
\def\step{1.2}
\def\runglen{0.5}
\def\ladderlen{6}
\def\offset{2*\step}

\clip (-\runglen*\ladderlen-2.2,-\runglen*\ladderlen-2.2)
rectangle (\runglen*\ladderlen+1.2,\runglen*\ladderlen+0.7);


\node[smallvx,label=left:$a$] (a) at (-\step,0){};
\node[smallvx,label=right:$b$] (b) at (\step,0){};
\node[smallvx,label=above:$c$] (c) at (0,\step){};
\node[smallvx,label=below:$d$] (d) at (0,-\step){};

\begin{scope}[on background layer]
\draw[hedge,color=dunkelgrau,fill=hellgrau] (a.center) -- (c.center) -- (b.center) -- (d.center) -- cycle;
\end{scope}

\node at (0,0) {$W$};

\begin{scope}[shift={(-\offset,0)}]
\draw[hedge] (0,0.5*\runglen) -- (-\ladderlen*\runglen+\runglen,0.5*\runglen);
\draw[hedge] (0,-0.5*\runglen) -- (-\ladderlen*\runglen+\runglen,-0.5*\runglen);
\foreach \i in {1,...,\ladderlen}{
  \node[smallvx] (C\i) at (-\i*\runglen+\runglen,0.5*\runglen){};
  \node[smallvx] (D\i) at (-\i*\runglen+\runglen,-0.5*\runglen){};
  \draw[hedge] (C\i) -- (D\i);
}
\node at (-0.5*\ladderlen*\runglen+0.5*\runglen,0.5*\runglen+0.5) {$C$};
\end{scope}

\begin{scope}[shift={(0,-\offset)}]
\draw[hedge] (-0.5*\runglen,0) -- (-0.5*\runglen,-\ladderlen*\runglen+\runglen);
\draw[hedge] (0.5*\runglen,0) -- (0.5*\runglen,-\ladderlen*\runglen+\runglen);
\foreach \i in {1,...,\ladderlen}{
  \node[smallvx] (A\i) at (-0.5*\runglen,-\i*\runglen+\runglen){};
  \node[smallvx] (B\i) at (0.5*\runglen,-\i*\runglen+\runglen){};
  \draw[hedge] (A\i) -- (B\i);
}
\node at (0.5*\runglen+0.5,-0.5*\ladderlen*\runglen+0.5*\runglen) {$A$};
\end{scope}

\coordinate (Z) at (45:\offset); 

\def\nstep{1}
\path (Z) ++ (135:\nstep) node[smallvx] (v){};
\path (Z) ++ (135:\nstep+\runglen) node[smallvx] (u){};
\path (Z) ++ (-45:\nstep) node[smallvx] (y){};
\path (Z) ++ (-45:\nstep+\runglen) node[smallvx] (x){};

\pgfmathtruncatemacro{\llen}{\ladderlen-2}
\pgfmathsetmacro{\lstep}{1/(\ladderlen-1)}

\draw[hedge] (v) .. controls +(45:2) and +(45:2) .. (y)
\foreach \p in {1,...,\llen} {node[smallvx,pos=\p*\lstep] (V\p) {}};
\draw[hedge] (u) .. controls +(45:2.5) and +(45:2.5) .. (x)
\foreach \p in {1,...,\llen} {node[smallvx,pos=\p*\lstep] (U\p){}};

\foreach \p in {1,...,\llen} {\draw[hedge] (U\p) -- (V\p);}

\begin{scope}[on background layer]
\draw[bubble] (v.center) .. controls +(45:2) and +(45:2) .. (y.center) -- (x.center) .. controls +(45:2.5) and +(45:2.5) .. (u.center);
\draw[bubble] (A1.center) -- (A\ladderlen.center) -- (B\ladderlen.center) -- (B1.center) -- cycle; 
\draw[bubble] (C1.center) -- (C\ladderlen.center) -- (D\ladderlen.center) -- (D1.center) -- cycle; 
\end{scope}

\path (U4) ++ (0.5,0) node {$B$};

\draw[hedge] (x) -- (y);
\draw[hedge] (u) -- (v);


\draw[ded,out=230,in=0] (x) to (d);
\draw[ded,out=170,in=0] (y) to (c);

\draw[ded,out=-140,in=90] (u) to (a);
\draw[ded,out=-80,in=90] (v) to (b);

\draw[ded,out=10,in=180] (C1) to (c);
\draw[ded,out=-10,in=180] (D1) to (d);

\draw[ded,out=100,in=-90] (A1) to (a);
\draw[ded,out=80,in=-90] (B1) to (b);
\end{tikzpicture}
\caption{Construction in Theorem~\ref{ladders:thm}, although for a shorter ladder}\label{}
\end{figure}
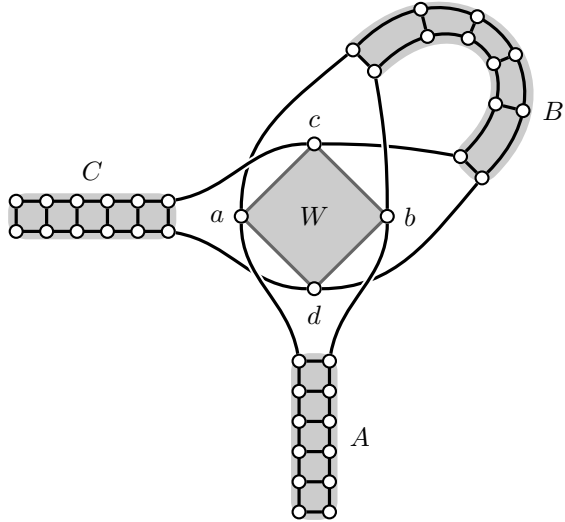

We observe that $\epsilon$ defines an embedding of $V(L)$ in $G$ 
that can easily be extended to an embedding of a subdivision of $L$, if
$u_{24}v_{24}$ and $u_{48}v_{48}$ are mapped to an $(a$--$b,c$--$d)$-linkage in $W$.
Moreover, even if up to $r-2$ edges are deleted in $G$, then 
it is still possible to find such a subdivision of $L$ in the
remaining graph; see here in particular Observation~\ref{wallNoHit}. 
We combine this observation with the following claim
that will take up the rest of the proof:
\begin{equation}\label{edlad}
\emtext{every subdivision of $L$ in $G$ contains an $(a$--$b,c$--$d)$-linkage in $W$.}
\end{equation}
With  Observation~\ref{trees:wallNoTwo}, we deduce from the claim that $G$ cannot contain
two edge-disjoint subdivisions of $L$, which implies, as $r$ can be arbitrarily large, that the subdivisions
of $L$ do not have the edge-Erd\H os-P\'osa property.

To prove~\eqref{edlad},		
 we fix a subdivision $\gamma(L)$ of $L$ in $G$,
		where we treat $\gamma$ as the function that 
		maps every vertex of $L$ to a vertex of $G$
		and every edge $st\in E(L)$ to a $\gamma(s)$--$\gamma(t)$-path in $G$.
		A \emph{rung} of $\gamma(L)$ is a path $\gamma(u_iv_i)$ for some $i\in [71]$.
		
		We first claim that 
		\begin{equation}\label{ladders:rungsInWall}
\emtext{
			$W-T$ contains at most $20$ rungs of $\gamma(L)$.
}
		\end{equation} 
		To prove \eqref{ladders:rungsInWall} consider $\gamma(L)-T$
		and let $L_1,\ldots, L_p$ be the distinct non-empty maximal subdivisions of subladders of $L$ in $\gamma(L)-T$.
		As $|T|=4$, we have $p\le 5$.
		Note that every rung of $\gamma(L)$ is either met by $T$ or is contained in exactly one $L_i$.
		Since $\gamma(L)$ contains $71$ rungs and $T$ meets at most four rungs,
		at least one subladder $L_i$ contains at least six rungs (with room to spare).
		Note that $L_i$ is entirely contained in one component of $G-T$.
		Assume now for a contradiction that $W-T$ contains $21$ rungs of $\gamma(L)$.
		These rungs are contained in certain $L_i$
		and if $W-T$ contains one rung of $L_i$, it contains $L_i$ entirely.
		Since, $W-T$ does not contain a subdivision of a ladder of length at least $6$, by Lemma~\ref{ladders:seventeen},
		every $L_i$ with $L_i\subseteq W-T$ contains at most five rungs.
		If $W-T$ contained $21$ rungs of $\gamma(L)$, it contains at least five subdivisions of subladders $L_i$,
		that is,  $L_1 \cup \ldots \cup L_p$.
		However, we observed that at least one of these subdivisions of ladders has length at least $6$,
		a contradiction to~Lemma~\ref{ladders:seventeen}.
		This proves~\eqref{ladders:rungsInWall}.
		
		Next, we claim that
		\begin{equation}\label{ladders:rungsInEps}
\emtext{
			each of $A,B,C$ contains at most $23$ rungs of $\gamma(L)$.
}
		\end{equation} 
We prove the claim for $A$, the proofs for $B$ and $C$ are almost
the same. 
All inner rungs of $\gamma(L)$ (these are the paths $\gamma(u_iv_i)$ for $2\le i\le 70$) 
		are paths between two vertices of degree $3$ in $\gamma(L)$ and hence degree at least $3$ in $G$.
		The only such vertices in $A$ are those in $R=\{\epsilon(u_i),\epsilon(v_i): 2\le i\le 23\}$.
		As rungs of $\gamma(L)$ are disjoint from each other,
		no two rungs share a vertex of $R$ and hence, $A$ can contain at most $\frac{|R|}{2}=22$ inner rungs of $\gamma(L)$.
As $A$ can contain, additionally, at most one of the rungs $\gamma(u_1v_1)$ 
and $\gamma(u_{71}v_{71})$, we see that $A$ contains at most~$23$ rungs of $\gamma(L)$,
which proves~\eqref{ladders:rungsInEps}.
	
		\medskip

		Set $M=\gamma(L)-T$.
		We claim that
		\begin{equation}\label{ladders:threeComps}
		\begin{minipage}[c]{0.8\textwidth}\em
			$M$ has exactly three components that contain a vertex of $\gamma(U)$,
			namely $\gamma(L)\cap A$, $\gamma(L)\cap B$ 
and $\gamma(L)\cap C$,
			and each of these components contains at least $21$ rungs of $\gamma(L)$.
		\end{minipage}\ignorespacesafterend 
		\end{equation} 
		Let us prove~\eqref{ladders:threeComps}.
		As $M\subseteq G-T$
		it follows from~\eqref{ladders:rungsInWall} and~\eqref{ladders:rungsInEps} 
that no component of $M$ contains $24$ rungs of $\gamma(L)$.
Observe that $M$ has at most four components that contain a vertex of $\gamma(U)$ because $|T|=4$.
				
Suppose first that $M$ contains four components that each contain a vertex 
of  $\gamma(U)$.
		It is not hard to see that this is only possible if there is an $i\in [67]\sm\{1\}$ such that 
\[
\gamma(L-\{u_j,v_j:i\le j \le i+3\})\emtext{ is disjoint from }T.
\]
		\comment{REASON: 
		With four components we have two components at the ends of the ladder, 
		and two middle components. Any middle component that isn't a singleton
		needs four vertices/edges to separate it from the rest. Then on one side
		of the non-singleton middle comp there must be two other components, 
		that are separated from each other AND from the middle component
		by just two vertices/edges. This is impossible. Thus, the two 
		middle components are singletons, which is only possible if $X$ has the desired form.
		}%
As then 
either $i\geq 25$ or $i\leq 71-3-25$ it follows that  there 
is  a subladder $L'$ of $L$ of length~$24$ such that $\gamma(L')$ is disjoint from $T$, 
which implies that $M$ has a component with at least~$24$ rungs --- this is impossible by~\eqref{ladders:rungsInWall} and~\eqref{ladders:rungsInEps}.

Thus, $M$ has at most three components that contain a vertex of $\gamma(U)$.
		Suppose there is such a component $K$ of $M$ that does not contain $21$ rungs of $\gamma(L)$.
		Then, the other two of these components together contain at least $71-20-4=47$ rungs
		and hence, one of them contains at least $24$ rungs.
Again, this is impossible.

Therefore, 
$M$ has three components that contain vertices from $\gamma(U)$, and 
each of these contains at least $21$ rungs of $\gamma(L)$.
By~\eqref{ladders:rungsInWall}, 
$W-T$ does not contain any of these three components,
and thus does not contain any vertex from $\gamma(U)$.
		Thus, the only components of $M$ that can contain a vertex in $\gamma(U)$,
		are $\gamma(L)\cap A$, $\gamma(L)\cap B$ and $\gamma(L)\cap C$.
		This proves~\eqref{ladders:threeComps}.
		
		\medskip
		
		As $\gamma(U)\cap V(W-T)=\emptyset$ by~\eqref{ladders:threeComps}, 
		the only vertices in $G$ that could serve as a vertex in  $\gamma(U)$,
		are the vertices $\epsilon(s)$ for $s\in U$. As $\gamma$ and $\epsilon$ are injective maps,
		we have $|\gamma(U)|=|\epsilon(U)|$. Thus, $\gamma(U)\subseteq \epsilon(U)$ implies that
		\begin{equation}\label{ladders:branchVertices}
			\gamma(U)=\epsilon(U) = (\epsilon(U)\cap (V(A\cup B\cup C)))
\cup\{a,b,c,d\}.
		\end{equation} 
As $A$ is separated from the rest of $G$ by $\{a,b\}$, we deduce
that $A$ must either contain the first~$21$ rungs or the last $21$ rungs
of $\gamma(L)$. By symmetry, we may assume that $A$ contains the rung $\gamma(u_1v_1)$.
As otherwise there would be vertices in $\epsilon(U)\cap V(A)$
that do not lie in $\gamma(U)$, 
contradicting~\eqref{ladders:branchVertices}, it follows that 
all of $L[\{u_j,v_j:j\in[23]\}]$ is mapped to $A$ via $\gamma$,
which in turn implies that $\{\gamma(u_{24},v_{24}\}=\{a,b\}$. 

Where now lies $\gamma(u_{24}v_{24})$? The path is either contained in $W$, 
or it intersects $B$. In the latter case, however, some vertex in $\epsilon(U)\cap V(B)$
lies in the interior of $\gamma(u_{24}v_{24})$, which 
contradicts~\eqref{ladders:branchVertices}. Thus $\gamma(u_{24}v_{24})\subseteq W$.

Arguing in the same way with $C$ we deduce that also that 
$\{\gamma(u_{48}),\gamma(v_{48})\} = \{c,d\}$ and that
$\gamma(u_{48}v_{48})\subseteq W$.
As the two paths $\gamma(u_{24}v_{24})$ and $\gamma(u_{48}v_{48})$ are disjoint
we have found an $(a$--$b,c$--$d)$-linkage in $W$, which proves~\eqref{edlad}.
\end{proof}

\section{Trees of large pathwidth}\label{sec:trees}
	
\newcommand{\SMALL}{12}
\newcommand{\SMALLPLUS}{13}
\newcommand{\SMALLPLUSPLUS}{14}
\newcommand{\SMALLMINUS}{11}

\newcommand{\BORDER}{15}
\newcommand{\BORDERPLUS}{16}
\newcommand{\BORDERMINUS}{14}
\newcommand{\BORDERMINUSMINUS}{13}

\newcommand{\LEVEL}{17}
\newcommand{\LEVELPLUS}{18}
\newcommand{\LEVELPLUSPLUS}{19}

	We expected the expansions of a fixed tree $T$ to have the edge-\EP.
	When the tree is complex enough, however, they do not:

	\begin{theorem}\label{trees:thm}
		If $T$ is a subcubic tree of pathwidth at least \LEVELPLUSPLUS{}, 
		then the family of subdivisions of $T$ does not have the edge-\EP.
	\end{theorem}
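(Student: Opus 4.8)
The plan is to reduce the tree case to the ladder construction already established. The key structural fact I would use is the following: a subcubic tree $T$ of large pathwidth contains, as a topological minor, a subcubic tree of a very regular shape — concretely, a large ``binary-caterpillar'' or, more usefully, a long path with many pendant branches, and in fact any subcubic tree of pathwidth at least some absolute constant contains a subdivision of a fixed ``complete ternary tree'' of bounded height, or alternatively contains two long disjoint paths joined in a ladder-like pattern. The cleanest route, matching the constant $\LEVELPLUSPLUS$ and the hypothesis ``pathwidth at least $\LEVELPLUSPLUS$'', is to invoke a result (from the literature on pathwidth of trees, or proved by a short self-contained argument) that a subcubic tree of pathwidth $\ge p$ contains a subdivision of the complete binary tree of height $h(p)$, where $h$ grows with $p$; and then observe that the complete binary tree of height roughly $\log(\text{ladder length})$ is \emph{not} the right target — rather I want the tree to \emph{contain a long path with sufficiently many branch vertices spaced along it} so that I can imitate the three-part ladder argument of Theorem~\ref{ladders:thm}.

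So here is the concrete strategy. First I would fix, inside $T$, a sufficiently long subpath $Q = w_0 w_1 \dots w_N$ all of whose internal vertices $w_1,\dots,w_{N-1}$ have degree $3$ in $T$ (such a path exists once the pathwidth is large: a tree of large pathwidth cannot be a ``spider'' or a short caterpillar, and a greedy/extremal argument extracts a long internal-degree-$3$ segment — this is exactly where the constant $\LEVELPLUSPLUS$ enters). At each $w_i$ there hangs a nontrivial branch $B_i$ of $T$ away from $Q$; each $B_i$ contains at least one edge, hence at least one leaf of $T$ distinct from the others. Now I build the obstruction $G$ exactly as in Section~\ref{sec:ladder}: take a condensed wall $W$ of size $r$ with terminals $a,b,c,d$, designate three branch vertices $w_{i_1}, w_{i_2}, w_{i_3}$ of $Q$ at roughly the $\tfrac14,\tfrac12,\tfrac34$ points, map the four ``corners'' near $w_{i_1},\dots$ onto $a,b,c,d$ in the pattern used there, inflate each of the three $Q$-segments (together with their hanging branches $B_i$) into gadgets of multiplicity $r$ attached to $W$ at the terminals, and delete the two connecting rungs/edges of $Q$ at the boundaries between segments so that restoring them forces an $(a$--$b,c$--$d)$-linkage in $W$.

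The heart of the proof is then the analogue of claim~\eqref{edlad}: \emph{every subdivision $\gamma(T)$ of $T$ in $G$ routes the two deleted $Q$-edges through $W$ as an $(a$--$b,c$--$d)$-linkage}. To prove this I would run the same three-step counting as in Theorem~\ref{ladders:thm}, with ``rungs'' replaced by ``the branch vertices $w_i$ together with their branches $B_i$'': (1) $W - T$ can absorb only a bounded number of the $B_i$'s, because $W-\{a,b\}$ has bounded block structure and hence cannot contain a long path of degree-$3$ branch points with disjoint pendant subtrees — this is the role played by Lemma~\ref{ladders:seventeen}, and I would prove a matching lemma saying $W-\{a,b\}$ contains no subdivision of the relevant small tree gadget; (2) each of the three inflated gadgets $A,B,C$ can host only a bounded number of the $w_i$'s as branch vertices, because the only vertices of degree $\ge 3$ available there are the designated ones $\epsilon(w_i)$, and injectivity of $\gamma$ limits how many branch vertices fit; (3) combine (1) and (2) with $|T|=4$ to conclude that $M = \gamma(T)-T$ has exactly three ``large'' components, forced to be $\gamma(T)\cap A$, $\gamma(T)\cap B$, $\gamma(T)\cap C$, whence $\gamma$ maps the designated corner vertices onto $\{a,b,c,d\}$ and the two removed $Q$-edges into $W$; their disjointness gives the linkage. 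Then Observation~\ref{trees:wallNoTwo} kills two edge-disjoint copies, Observation~\ref{wallNoHit} defeats any small edge hitting set, and letting $r\to\infty$ finishes the proof.

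The main obstacle, I expect, is the first step — pinning down the right ``canonical skeleton'' inside an arbitrary subcubic tree of pathwidth $\ge \LEVELPLUSPLUS$ and making the branch structure regular enough that the counting in steps (1)--(3) goes through with clean constants. In particular one must ensure both that the extracted path $Q$ is long enough (so that the three segments each have many branch vertices, beating the bounds from (1) and (2)) \emph{and} that the hanging branches $B_i$ are simultaneously simple enough to embed into the bounded-multiplicity gadgets yet rich enough (each carrying a leaf) to block $W-\{a,b\}$ from absorbing too many of them. Managing this trade-off is what forces the precise pathwidth threshold, and it is the one place where the proof genuinely differs from the ladder case rather than merely transcribing it.
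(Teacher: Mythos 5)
There is a genuine gap, and it sits exactly where you locate the ``main obstacle'': the reduction to ladder-style counting does not work for general subcubic trees. First, the structural claim you start from is already false as stated: a subcubic tree of pathwidth at least \LEVELPLUSPLUS{} need not contain any long path all of whose internal vertices have degree~$3$ (subdivide every edge of a binary tree of height $37$ heavily; the pathwidth is unchanged but no two degree-$3$ vertices are adjacent). What large pathwidth gives you, via Lemma~\ref{trees:coreDepthPathwidth}, is a subdivided binary tree, i.e.\ a spine whose pendant branches $B_i$ are themselves large subdivided binary trees of wildly varying sizes --- not small uniform gadgets. This breaks both of your counting steps. For step (1), the analogue of Lemma~\ref{ladders:seventeen} fails: the ladder argument used $2$-connectivity to trap the ladder in a single block of $W-\{a,b\}$, but a tree is not $2$-connected, and a condensed wall, despite having pathwidth at most~$5$, can contain arbitrarily long caterpillar-like stretches of spine with small pendant branches; so the wall can absorb unboundedly many of your spine vertices $w_i$ whenever their branches happen to be path-like. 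For step (2), the gadgets are inflations of whole thirds of $T$ and therefore contain \emph{many} vertices of degree at least~$3$ (all the $\epsilon$-images of branch vertices inside the big $B_i$'s), so a subdivision $\gamma(T)$ can map unboundedly many spine vertices into a single gadget --- e.g.\ into the inflation of one large branch --- and your ``only the designated $\epsilon(w_i)$ have degree $3$'' bound, which was the engine of the ladder count~\eqref{ladders:rungsInEps}, is simply unavailable. In short, a subdivision of $T$ has enormous freedom to permute heavy subtrees among unintended locations, and counting branch vertices per part cannot exclude this.

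This is precisely why the paper does not transcribe the ladder proof but instead builds a quantitative ``capacity versus utilization'' machinery: vertices are graded by their level $\lambda_T$ (existence of three disjoint $v$-linked $B_h$-trees), subtrees are measured by their weight $\omega$ (number of $L_{10}$-vertices), the tree is cut at a weight-\emph{minimal} $\leq_T$-minimal $L_{15}$-vertex $u_{15}$ (property~\ref{wmin}), and the wall is controlled not by block structure but by the fact that it can contain at most two vertices of $\gamma(L_{10})$ (Lemma~\ref{HDlem}~(i) plus Lemma~\ref{trees:atMostTwo}). The location of the heavy subtrees is then forced by weight comparisons and the signature-monotonicity argument (Lemmas~\ref{trees:subtree2}, \ref{trees:rootMapping}, \ref{trees:mapSignature}, \ref{trees:treeOutsideA}), not by counting designated branch vertices. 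If you want to rescue your plan, you would have to replace ``number of spine branch vertices absorbed'' by some weight-type invariant that is simultaneously small on the wall and tightly matched between each gadget and the subtree it is meant to host --- at which point you are essentially reconstructing the paper's argument rather than reducing to the ladder case.
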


Clearly, the theorem implies Theorem~\ref{mainthm}~(ii). 
	We believe that the theorem still holds true for trees of larger maximum degree,
	if instead of subdivisions we consider expansions of $T$ and if
	the pathwidth of $T$ is sufficiently large
	and that this can be shown by using exactly the same construction.
	However, subdivisions and subcubic trees are easier to handle.
	
	To prove Theorem~\ref{trees:thm}
	we construct for any number $r$ (the size of a possible edge hitting set)
	a graph $G$ such that $G$ contains no two edge-disjoint $T$-subdivisions
	but every edge hitting set for $T$-subdivisions contains at least $r$ edges.
	The graph $G$ consists of a condensed wall to which inflations of some parts of the tree $T$ are attached.
	The crucial step lies in proving 
that every subdivision of $T$ in $G$ contains a linkage in the condensed wall.
As, by Observation~\ref{trees:wallNoTwo}, there cannot be two edge-disjoint of these 
we will then have proved the theorem.

We prove Theorem~\ref{trees:thm} in the course of this section.	
	
	\subsection{Binary trees and pathwidth}

	We define a binary tree of height $h\geq 0$ inductively as follows.
	A binary tree of height $0$ is simply the tree with only one vertex, which is also its root.
	A binary tree of height $h>0$ arises from the disjoint union of two binary trees $T_1,T_2$ of height ${h-1}$ and a vertex $r$ (its root) that is joint to the roots of $T_1,T_2$.
	A tree~$T$ is called a \emph{$B_h$-tree} if it is a subdivision of a binary tree $T'$ of height $h$.
	The \emph{root} of $T$ is the branch vertex that corresponds to the root of $T'$.
	We call a tree~$T$ a \emph{$v$-linked $B_h$-tree}
	if $T=T_h\cup P$ for a $B_h$-tree $T_h$ with root $r$ 
	and a $v$--$r$-path~$P$ such that $V(P\cap T_h)=\{r\}$ (we allow that $V(P)=\{r\}$).
	
	Robertson and Seymour \cite{RS83}  were the first 
	to prove that a graph of large pathwidth contains
	a subdivision of a binary tree with large height.
	Marshall and Wood~\cite{MW14} prove an explicit formula if the graph is a tree:
	
	\begin{lemma}[Marshall and Wood \cite{MW14}, restated]\label{trees:coreDepthPathwidth}
		Let $T$ be a tree with at least two vertices.
		Then, $T$ contains a $B_{\pw(T)-1}$-tree.
	\end{lemma}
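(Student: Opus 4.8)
This is, up to reindexing, the main theorem of Marshall and Wood~\cite{MW14}, and the cleanest route is simply to invoke it; for completeness I sketch the argument. The plan is to prove by induction on $p\ge 1$ the formally stronger statement: \emph{if $\pw(T)\ge p$ and $v_0\in V(T)$, then $T$ contains a $v_0$-linked $B_{p-1}$-tree}. The lemma follows by taking $v_0$ arbitrary (note $\pw(T)\ge 1$ since $T$ has at least two vertices, so $\pw(T)-1\ge 0$).

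The engine of the induction is the classical recursive description of the pathwidth of a tree (this is essentially what Marshall and Wood establish, and is long known in the graph-searching literature): for $p\ge 2$ one has $\pw(T)\ge p$ if and only if there is a vertex $v$ of $T$ such that at least three components of $T-v$ have pathwidth at least $p-1$. Only the ``only if'' direction is needed here; it is important that the threshold condition is phrased in terms of the pathwidth of the components $C$ of $T-v$ themselves, not of $T[C\cup\{v\}]$. I would take this characterisation as given.

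The base case $p=1$ is trivial, since a $B_0$-tree is a single vertex, so $v_0$ with the trivial path is a $v_0$-linked $B_0$-tree. For the step, let $\pw(T)\ge p\ge 2$ and $v_0\in V(T)$, and choose $v$ with components $C_1,C_2,C_3$ of $T-v$ of pathwidth at least $p-1$; as $v_0$ lies in at most one of them, we may assume $v_0\notin C_1\cup C_2$. For $i=1,2$ let $c_i$ be the neighbour of $v$ in $C_i$ and apply the induction hypothesis to $(C_i,c_i)$: this yields a $B_{p-2}$-tree $X_i\subseteq C_i$ with root $\rho_i$, together with a $c_i$--$\rho_i$-path meeting $X_i$ only in $\rho_i$; prepending the edge $vc_i$ turns this into a non-trivial $v$--$\rho_i$-path $Q_i$ that still meets $X_i$ only in $\rho_i$ and otherwise lies in $C_i$. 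Since $C_1,C_2$ are distinct components of $T-v$, the trees $Q_1\cup X_1$ and $Q_2\cup X_2$ meet exactly in $v$, which therefore has degree two in $Y:=Q_1\cup X_1\cup Q_2\cup X_2$; regarding each $Q_i$ as a subdivided edge from $v$ to the root of $X_i$, one sees that $Y$ is a $B_{p-1}$-tree rooted at $v$. Finally, the $v_0$--$v$-path in $T$ avoids $C_1\cup C_2$ (as $v_0\notin C_1\cup C_2$) and hence meets $Y$ only in $v$, so adjoining it to $Y$ produces the desired $v_0$-linked $B_{p-1}$-tree.

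The only points requiring care are the degenerate ends of the recursion: when $p=2$ the trees $X_i$ are single vertices and $Y$ is just a path through $v$, which is exactly a $B_1$-tree rooted at $v$; and it may happen that $c_i=\rho_i$, which is harmless because the edge $vc_i$ already makes $Q_i$ non-trivial. The real content — and the part I would expect to be the main obstacle in a fully self-contained write-up — is proving the recursive pathwidth characterisation with the correct ``$p-1$'' threshold; the naive variant phrased via $T[C\cup\{v\}]$ is false already for caterpillars, and once the correct version is available everything above is routine bookkeeping.
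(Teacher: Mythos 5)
The paper does not actually prove this statement: it is imported wholesale from Marshall and Wood~\cite{MW14}, so there is no internal proof to compare against. Your argument is a correct standalone derivation, and it even establishes the slightly stronger rooted form (a $v_0$-linked $B_{\pw(T)-1}$-tree for any prescribed $v_0$), which matches how the paper actually uses $B_h$-trees via the sets $\lev_h$. The induction step itself is sound: the three components guaranteed by the characterisation let you avoid the one containing $v_0$, the two recursively obtained $c_i$-linked $B_{p-2}$-trees live in distinct components of $T-v$ and hence meet only in $v$ after prepending the edges $vc_i$, and the $v_0$--$v$-path cannot re-enter $C_1\cup C_2$, so it meets the constructed $B_{p-1}$-tree only in its root; the degenerate cases ($p=1,2$, trivial linking path, $c_i=\rho_i$) are handled. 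The one genuine load-bearing ingredient you do not prove is the recursive characterisation ``$\pw(T)\ge p$ (for $p\ge 2$) implies some vertex $v$ has at least three components of $T-v$ of pathwidth $\ge p-1$''; this is indeed a classical fact (it goes back to the vertex-separation/graph-searching literature, e.g.\ Ellis--Sudborough--Turner and Scheffler), and you correctly flag both that you are assuming it and that the naive variant phrased via $T[C\cup\{v\}]$ would be false. So in effect you have traded the paper's citation of \cite{MW14} for a citation of an older, more elementary characterisation plus a short extraction argument -- a legitimate and arguably more transparent route, but be aware that the cited characterisation is where essentially all the difficulty resides, so a fully self-contained write-up would still owe a proof of it.
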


	For the other direction of Lemma~\ref{trees:coreDepthPathwidth},
	Robertson and Seymour have a tight bound:
	
	\begin{lemma}[Robertson and Seymour~\cite{RS83}] \label{trees:pwBinaryTree}
		A binary tree of height $h$ has pathwidth $\lceil \frac 1 2(h+1) \rceil$.
		A $B_h$-tree has pathwidth $\lceil \frac 1 2(h+1) \rceil$.
	\end{lemma}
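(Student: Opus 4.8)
The plan is to treat the lemma's two sentences separately. The first, that an unsubdivided binary tree of height $h$ has pathwidth $\lceil\frac{1}{2}(h+1)\rceil$, is exactly the statement proved by Robertson and Seymour in \cite{RS83}, so I would simply invoke it. For completeness one can reprove it by the standard two-sided argument. For the upper bound, construct a path decomposition recursively: a binary tree of height $h$ is a root with two children, beneath which hang four binary trees of height $h-2$; concatenating optimal path decompositions of these four subtrees, and adding to every bag of each a bounded set of ancestor vertices, yields a decomposition whose width increases by one per two levels. Making the increment exactly one requires carrying along a slightly stronger ``rooted'' pathwidth (in which the root must lie in an end bag) and running a joint induction on the two parameters; this is routine bookkeeping. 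For the lower bound, pass to the vertex-separation number, which equals pathwidth, and show by induction that when a linear layout of the tree first separates the root from the opposite side there are still two height-$(h-2)$ subtrees on one side of the root that independently demand a growing separator, so no layout avoids a boundary of the claimed size. This lower bound is the only genuinely delicate point.

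The second sentence, that every $B_h$-tree --- that is, every subdivision of a binary tree of height $h$ --- has the same pathwidth, I would deduce from the first together with the invariance of pathwidth under edge subdivision. For one inequality, contracting the subdivided paths exhibits the binary tree of height $h$ as a minor of the $B_h$-tree, and pathwidth does not increase under taking minors, so $\pw(B)\ge\pw(T)$ for every $B_h$-tree $B$ and the corresponding binary tree $T$. For the other, take an optimal path decomposition of $T$ of width $k=\pw(T)$; to subdivide an edge $uv$ by a new vertex $w$, pick a bag containing both $u$ and $v$ and splice the single new bag $\{u,v,w\}$ in next to it. This bag has size $3\le k+1$, and since it contains $u$ and $v$ it keeps every running vertex-interval connected, so the width does not grow. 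Iterating over all subdivided edges gives $\pw(B)\le k$, and hence equality, provided $\pw(T)\ge 2$.

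This proviso $\pw(T)\ge 2$ is the one thing to be slightly careful about: for the very smallest heights a subdivision can strictly increase pathwidth --- subdividing two of the three legs of a star turns pathwidth $1$ into pathwidth $2$ --- so the equality is not completely unconditional. It is harmless here, since $\pw(T)\ge 2$ for every $h$ in the (large) range in which Lemma~\ref{trees:pwBinaryTree} is used towards Theorem~\ref{trees:thm}, and the finitely many tiny heights can be excluded or checked directly. I therefore expect the whole lemma to present no real difficulty beyond citing \cite{RS83}; the main obstacle, if one insists on a self-contained argument, is the vertex-separation lower bound for the uniform binary tree.
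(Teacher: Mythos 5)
Your proposal is correct and takes essentially the paper's route: the paper simply cites Robertson--Seymour \cite{RS83} for the pathwidth of binary trees and treats the extension to $B_h$-trees (subdivisions) as immediate, which is exactly what you supply via minor-monotonicity in one direction and the width-preserving splicing of size-$3$ bags (valid once the pathwidth is at least $2$) in the other. One tiny slip in your aside: subdividing only two legs of $K_{1,3}$ still yields a caterpillar of pathwidth $1$ --- you need all three legs subdivided to push the pathwidth to $2$ --- but nothing in your argument depends on that example.
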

	\comment{	\begin{proof}
			Let $T_h$ denote a binary tree of height $h$.
			We prove first that 
			\begin{equation}\label{trees:lowerBound}
			\begin{minipage}[c]{0.8\textwidth}\em
			$\pw(T_h)\ge \pw(T_{h-2})+1$.
			\end{minipage}\ignorespacesafterend 
			\end{equation} 
			We use an argument of Diestel \mymargin{Zitat klaeren}:
			Every connected graph $G$ contains a path $Q$ such that $G-Q$ has smaller pathwidth than $G$.
			The graph $T_h$ contains four disjoint copies of $T_{h-2}$.
			Any path $Q$ in $T_h$ is disjoint from two such copies of $T_{h-2}$
			and therefore, $T_h-Q$ contains a tree isomorphic to $T_{h-2}$.
			Taking $Q$ as a path such that $T_h-Q$ has smaller pathwidth than $T_h$, we obtain \eqref{trees:lowerBound}
			
			Next we prove an upper bound:
			\begin{equation}\label{trees:upperBound}
			\begin{minipage}[c]{0.8\textwidth}\em
			$\pw(T_h)\le \pw(T_{h-2})+1$.
			\end{minipage}\ignorespacesafterend 
			\end{equation} 
			Consider a longest leaf-to-leaf path $Q$ in $T_h$ and let $v_0,\ldots, v_{2h}$
			be its vertices in the order of their appearance on $Q$.
			Then, $T-Q$ has components $F_1,\ldots, F_{k-1}, F_{k+1},\ldots, F_{2k-1}$
			such that for $i\in [k-1]$, we have $F_i \cong F_{2h-i} \cong T_{i-1}$ with root $u_i$ resp. $u_{2h-i}$
			and such that $u_i$ is adjacent to $v_i$.
			Let $\cP_i$ be a minimal path-decomposition of $T_i$.
			
			Let $\cP_i'$ be obtained from $\cP_i$
			by adding the vertex $u_i$ to every bag of $\cP_i$.
			Now we can define a path decomposition of $T_h$. 
			Step by step,
			we take a bag consisting of $u_{i-1},u_i$ and then all bags of $\cP_i'$ 
			in the order as in $\cP_i'$.
			
			The largest pathwidth of a $F_i$ is $\pw(F_{{h-1}})=\pw(T_{h-2})$.
			Hence, the new path decomposition has width at most $\pw(T_{h-2})+1$.
			This proves \eqref{trees:upperBound}.

			Claims \eqref{trees:lowerBound} and \eqref{trees:upperBound} show $\pw(T_{h})=\pw(T_{h-2})+1$.
			It is trivial to see $\pw(T_0)=0$, $\pw(T_1)=1$ which completes the proof.
			The second part of the statement is trivial to deduce from the first one.
		\end{proof}
	}	

For any graph $G$,  
we define $\lev_h(G)$ as 
the set of all vertices $v$ such that  
there are three $v$-linked $B_h$-trees in $G$ that only meet in $v$ and whose root is not $v$.
Note that clearly $\lev_{h+1}(G)\subseteq \lev_{h}(G)$.	
	For every $v\in V(G)$ with $d_G(v)\ge 3$, we define the \emph{level} of $v$ as 
	\[
	\lambda_G(v)=\max\{h: v\in \lev_h(G)\}.
	\]

	For a trees $T'\subseteq T$, 
we define the \emph{weight} $\omega_T(T')$ of $T'$ as $|\lev_{10}(T)\cap V(T')|$.
	For later use we note the following.
	
	\begin{lemma}\label{trees:weightOfBinaryTree}
		The weight of a $B_{k}$-subtree $T'$
of any tree $T$ is at least $2^{k-10}-2$.
	\end{lemma}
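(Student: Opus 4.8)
The plan is to prove the statement by induction on $k$, peeling off the root together with the path joining it to (in the $v$-linked variant) the rest of the tree, so that what remains are two $B_{k-1}$-subtrees, and summing their weights. The base of the induction should be $k\le 11$, where $2^{k-10}-2\le 0$ and the bound is vacuous (weights are nonnegative), so nothing is to be checked there.

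For the inductive step, let $k\ge 12$ and let $T'$ be a $B_k$-subtree of $T$. I would first argue that the root $r'$ of $T'$ lies in $\lev_{10}(T)$: by definition of a $B_k$-tree, $r'$ has two neighbours in $T'$ that are roots of disjoint $B_{k-1}$-subtrees $T_1',T_2'$ of $T'$; since $k-1\ge 11\ge 10$ and a $B_{k-1}$-tree contains a $B_{10}$-tree, $T_1'$ and $T_2'$ each contain an $r'$-linked $B_{10}$-tree whose root is not $r'$. However this only gives \emph{two} such $r'$-linked $B_{10}$-trees inside $T'$, and $\lev_{10}$ requires \emph{three}. This is where the surrounding tree $T$ must be used: because $T$ is subcubic and $r'$ already has two neighbours used by $T_1'$ and $T_2'$, the branching structure of $T$ at $r'$ is constrained, and one must locate a third $r'$-linked $B_{10}$-tree in $T$ meeting the other two only in $r'$. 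This third tree need not live inside $T'$; it may use the ambient part of $T$, via the third edge at $r'$ (or, if $r'$ is the global root of $T$, via a path inside $T'$ that is not used). This verification — that $r'\in\lev_{10}(T)$ — is the main obstacle, and the place where subcubicity and the distinction between "$B_h$-tree" and "$v$-linked $B_h$-tree" really matter; I would expect to need a short case analysis on how many of the (at most three) edges at $r'$ in $T$ point "up" toward the global root versus "down".

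Granting $r'\in\lev_{10}(T)$, the rest is bookkeeping. The two subtrees $T_1',T_2'$ are disjoint $B_{k-1}$-subtrees of $T$, and their vertex sets are disjoint from $\{r'\}$, so
\[
\omega_T(T') \;=\; |\lev_{10}(T)\cap V(T')| \;\ge\; 1 + \omega_T(T_1') + \omega_T(T_2')
\;\ge\; 1 + 2\bigl(2^{(k-1)-10}-2\bigr) \;=\; 2^{k-10}-3.
\]
This is off by one from the claimed $2^{k-10}-2$, so to close the gap I would strengthen the induction hypothesis slightly — e.g. prove $\omega_T(T')\ge 2^{k-10}-1$ for the $B_k$-subtree and deduce the stated (weaker) bound, or more carefully observe that at least one of the leaves/low vertices of $T_1',T_2'$ also lies in $\lev_{10}(T)$ because $k-1\ge 11$ forces each $T_i'$ to itself contain a root in $\lev_{10}(T)$ \emph{distinct} from its own root, contributing an extra $+1$ on each side. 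Either adjustment is routine; the only genuinely delicate point remains the membership $r'\in\lev_{10}(T)$, and I would budget most of the write-up for that.
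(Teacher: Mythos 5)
There is a genuine gap, and it sits exactly where you budgeted most of your effort: the claim that the root $r'$ of the $B_k$-subtree lies in $L_{10}(T)$ is false in general. Take $T=T'$ to be an (unsubdivided) binary tree of height $k$: its root has degree two, so any family of $r'$-linked $B_{10}$-trees that pairwise meet only in $r'$ and whose roots are distinct from $r'$ has size at most two (one through each child edge), never three. Since the lemma is stated for an arbitrary ambient tree $T$, you cannot count on any structure outside $T'$ to supply a third tree at $r'$. Your fallback repairs do not close the resulting deficit of one either. Strengthening the induction hypothesis to $2^{k-10}-1$ is impossible, because for $T=T'$ a binary tree of height $k$ the set $L_{10}(T)\cap V(T')$ is exactly the set of non-root vertices at depth at most $k-11$, so the stated bound $2^{k-10}-2$ is tight. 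The ``extra $+1$ on each side'' idea gestures at the right mechanism — the roots $r_1,r_2$ of the two $B_{k-1}$-subtrees \emph{do} lie in $L_{10}(T)$, the third $r_i$-linked $B_{10}$-tree being obtained by going up through $r'$ and down into the sibling subtree — but as written it is not a valid step: the inductive bound $\omega_T(T_i')\ge 2^{k-11}-2$ may already count $r_i$, so you may not add $1$ for it unless you prove the stronger statement that the bound holds with the root excluded, i.e.\ $|L_{10}(T)\cap(V(T_i')\setminus\{r_i\})|\ge 2^{k-11}-2$; that strengthened induction would work, but it is not the one you set up.

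For comparison, the paper's proof needs no induction. After reducing to an unsubdivided binary tree of height $k$ (suppressing subdivision vertices can only discard $L_{10}$-vertices), it observes that every vertex $w\neq r'$ at distance at least $11$ from all leaves of $T'$ lies in $L_{10}(T)$: two $w$-linked $B_{10}$-trees go down through the two children, and the third goes up through the parent and into the sibling's subtree, all inside $T'$. Counting these vertices by depth gives $\sum_{i=1}^{k-11}2^i=2^{k-10}-2$. So it is precisely the non-root vertices that carry the weight; your accounting, which insists on the root contributing and arrives at $2^{k-10}-3$, locates the contribution in the wrong place.
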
 
	\begin{proof}
		We may assume that $k\ge 12$, and we may assume $T'$ is a binary tree of height $k$ as by suppressing 
		vertices of degree~$2$ we may only lose vertices of $L_{10}(T)$.
		
		Let $r$ be the root of $T'$, and consider a vertex $w\neq r$
		that has distance at least~$11$ from every leaf of $T'$.
		Then, $w\in L_{10}(T)$.
		The number of such vertices $w$ is 
		\[
		\sum_{i=1}^{k-11} 2^i = 2^{k-10}-2.
		\]
		Therefore, the weight of $T'$ is at least $2^{k-10}-2$.
	\end{proof} 

\subsection{Decomposing the tree}\label{trees:structure}

	For the rest of the section let $T$ be a fixed subcubic tree of pathwidth at least~\LEVELPLUSPLUS{}.
	By Lemma~\ref{trees:coreDepthPathwidth}, $T$ contains a subdivision of a binary tree of height $\LEVELPLUS$
	and hence $\lev_{\LEVEL}(T)\neq \emptyset$.
	To simplify notation,
	we write  $L_i$
	instead of $L_i(T)$, and we also write $\omega(T')$ instead of $\omega_T(T')$
for any subtree $T'$ of $T$.
	Let $U$ denote the set of all vertices of $T$ of degree~$3$.

	Pick a vertex $r$ in $L_{17}$, and then let $\leq_T$ be the 
	usual tree order with root $r$. That is, 
	$u\le_T v$ if and only if the unique $u$--$r$-path in $T$ contains $v$,
	for any two vertices $u,v$ in $T$.
	While the partial order depends on the choice of $r$, we will never use 
	it to compare vertices of $T$ that are contained on the path between two vertices of $L_{17}$.
	Then, however, the 
	actual choice of the root makes no difference. 
	For every vertex $u$ of $T$, 
	let $T_u=T[\{v\in V(T): v\le_T u\}]$ and note that $T_u$ is a tree.

We first state a number of  observations about the structure of $T$ that
we will use throughout this section: 	
\begin{enumerate}[label=(T\arabic{*})]
\item\label{upishuge} The root of $T$ lies in $L_{16}$.
In particular, for every $v\in V(T)$ that is not the root there is a $v$-linked $B_{16}$-tree that 
		meets $T_v$ only in $v$.

\end{enumerate}

We will decompose $T$ into three parts (and two paths connecting these parts), 
a ``large'' part $A$ that contains $L_{\LEVEL}$, 
an intermediate part $C$ and
a ``small'' part~$D$ that does not contain any large binary tree as a subdivision;
	compare Figure~\ref{fig:treeStructure}.
	\begin{figure}[bht]
		\centering
		\begin{tikzpicture}[scale=0.8]
		
		
		\coordinate (v16) at (0,0);
		\coordinate (u16) at (1,0);
		\coordinate (v13) at (7,0);
		\coordinate (u13) at (8,0);

		\fill[rounded corners=10,fill=nochhellergrau] (u16) -- ++(0,-1.5) --  ($(v13)+(0,-1.5)$) -- (v13) -- ++(1,2.5) -- ($(u16)+(-1,2.5)$) -- cycle;

		\draw[hedge] (u16) --
		coordinate[pos=0.12](m1)								
		node[hvertex,fill=hellgrau,pos=0.26,label=below:$u_{\SMALLPLUSPLUS{}}$](u15){} 
		coordinate[pos=0.41](m2)
		coordinate[pos=0.55](m3)
		node[hvertex,fill=hellgrau,pos=0.7, label=below:$u_{\SMALLPLUS{}}$](u14){}
		coordinate[pos=0.85](m4)
		(v13);
		
		\stengelblase{(u16)}{0.7}{90}{1.5}{1}{$B_{15}$}
		\stengelblase{(u15)}{0.5}{90}{1.5}{1}{$B_{14}$}
		\stengelblase{(u14)}{0.5}{90}{1.5}{1}{$B_{13}$}
		\stengelblase{(m1)}{0.3}{90}{0.7}{0.4}{}
		\stengelblase{(m2)}{0.3}{90}{0.7}{0.4}{}
		\stengelblase{(m3)}{0.3}{90}{0.7}{0.4}{}
		\stengelblase{(m4)}{0.3}{90}{0.7}{0.4}{}
		\stengelblase{(v13)}{0.3}{90}{0.7}{0.4}{}
		
		\blase{(v16)}{180}{2.8}{2.4}{}
		\blase{(u13)}{0}{1.7}{1.4}{$D$}
		
		\draw[hedge] (v16)--(u16); 
		\draw[hedge] (v13)--(u13);	
		
		\node[hvertex,minimum size=30] (c18) at (-1.8,0){};
		
		\node at (c18){$L_{\LEVEL}$};
		\node at (-1.1,0.8){$A$};
		
		\node[hvertex,fill=hellgrau,label=above:$v_{\BORDER}$] at (v16){};
		\node[hvertex,fill=hellgrau,label=below:$u_{\BORDER}$] at (u16){};
		\node[hvertex,,fill=hellgrau,label=below:$v_{\SMALL}$] at (v13){};
		\node[hvertex,,fill=hellgrau,label=below:$u_{\SMALL}$] at (u13){};
		
		\node (c) at ($0.5*(u16)+0.5*(v13)+(0,2)$){$C$};
		
		\node (pm) at ($0.5*(u16)+0.5*(v13)+(0,-0.8)$){$P_M$};
		\draw[style=->|] (pm) -- ($(u16)+(0,-0.8)$);
		\draw[style=->|] (pm) -- ($(v13)+(0,-0.8)$);
		\end{tikzpicture}
		\caption{The structure of $T$}
		\label{fig:treeStructure}
	\end{figure}
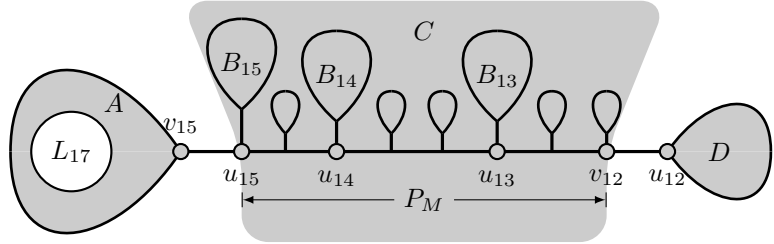

Let $\Smin$ be the set of $\leT$-minimal $\lev_{\BORDER{}}$-vertices.
Among all vertices in $\Smin$, let $u_{\BORDER{}}$ be one such that $\omega(T_{ u_{\BORDER{}}})$ is minimal. 
	Throughout the entire section,
	the weight of $T_{u_{\BORDER{}}}$ is denoted as $\wmin$.
\begin{enumerate}[label=(T\arabic{*}),resume]
\item\label{wmin} $u_{\BORDER}\in \Smin$ and $\min_{s\in \Smin}\omega(T_s)=\omega(T_{u_{15}})=:\wmin$,
where $\Smin$ is the set of vertices that are $\leq_T$-minimal within $L_{15}$.
\end{enumerate}

	As $T$ is subcubic and $T_{u_{\BORDER}}$ contains a $u_{\BORDER}$-linked $B_{\BORDERPLUS}$-tree, 
	the graph $T_{u_{\BORDER{}}}-u_{\BORDER{}}$ consists of two components $T_1,T_2$
each of which contains a $B_{\BORDER{}}$-tree. We pick $T_1,T_2$ such that $\omega(T_1)\geq\omega(T_2)$. 
Choose $u_{14}$ as a vertex in $T_2$ that is $\leq_T$-minimal among the vertices in $L_{14}$. 
Then, for $i=13,12$, choose $u_i$ as a vertex in $T_{u_{i+1}}$ that is $\leq_T$-minimal 
among the vertices in $L_i$. This shows:
	\begin{enumerate}[label=(T\arabic{*}),resume]
		\item\label{ukleaves} For $k\in\{12,\ldots,15\}$,
		the vertex $u_k$ lies in $L_{k}$ and is $\leq_T$-minimal within $L_{k}$.
		In particular, $T_{u_k}$ contains a $B_{k+1}$-tree with root $u_k$.
	\end{enumerate}		
	
For $i\in \{\SMALL{},\BORDER{}\}$, let $v_i$ be the $\leT$-smallest vertex in $U$ with $v_i>_T u_i$.
We define $P_M$, the \emph{main path}, 
as  the path between $u_{15}$ and $v_{12}$ in $T$. Then:
	\begin{enumerate}[label=(T\arabic{*}),resume]
\item\label{ukmain} $P_M$ is the $u_{15}$--$v_{12}$-path in $T$, contains $u_{14}$
and $u_{13}$, and $v_{12}>_T u_{12}$.
\item\label{linorder} $P_M$ is linearly ordered by $\leq_T$.
\item\label{mpath} If $T_1,T_2$ are the two components of $T_{u_{15}}-u_{15}$
such that $T_2$ contains $P_M$, then $\omega(T_1)\geq\omega(T_2)$. 
	\end{enumerate}		

Finally, we define the parts of $T$:
	\begin{enumerate}[label=(T\arabic{*}),resume]
\item\label{ACD} 	The interior vertices of $v_{15}Tu_{15}$ and of $v_{12}Tu_{12}$
have degree~$2$, $D=T_{u_{12}}$, $C=T_{u_{15}}-D-(v_{12}Tu_{12}-v_{12})$
and $A=T-T_{u_{15}}-(v_{15}Tu_{15}-v_{15})$.
	\end{enumerate}		

\subsection{The construction}

Let $r\geq 5$ be an arbitrary positive integer. We construct now a graph $G$ 
that does not admit an edge hitting set of size smaller than $r-2$ and which does not contain 
two edge-disjoint subdivisions of $T$. Roughly speaking, 
$G$ consists of a condensed wall $W$ with terminals $a,b,c,d$
and inflations of $A,C,D$, where we attach the inflation of $A$ to $a$, 
of $D$ to $d$ and of $C$ to $b$ and $c$, respectively;
see Figure~\ref{fig:counterGraph} for illustration.

During this process we define a function $\epsilon$
which in particular maps $U$ to $V(G)$ 
and $U$-paths of $T$ to certain subgraphs of $G$.
It will follow directly by our construction that there is a subdivision $\gamma(T)$ of $T$ in $G$ such that $\gamma(U)=\epsilon(U)$.
Let $\ell=|V(T)|$ and let $G$ be first the empty graph.
\begin{enumerate}[(C1)]
	\item Let $W$ be a condensed wall of size $r$ with terminals $a,b,c,d$; add it to $G$;
	\item for every $u\in U$, add a new vertex $x$ to $G$ and set $\epsilon(u)=x$;
	\item\label{fatedges} for every $U$-path $P$ with endvertices $u,v$ that is
		distinct from $v_{\BORDER{}}Tu_{\BORDER{}}$ and $v_{\SMALL{}}Tu_{\SMALL{}}$,
		add $r$ internally disjoint $\epsilon(u)$--$\epsilon(v)$-paths $P_1,\ldots,P_r$
		of length $\ell$ to $G$ and set $\epsilon(P)=P_1\cup \ldots \cup P_r$;
		\item\label{fatedges2} let $Z_a$ be a set of $r$ internally disjoint $\epsilon(v_{\BORDER{}})$--$a$-paths of length $\ell$,
		$Z_b$ a set of $r$ internally disjoint $b$--$\epsilon(u_{\BORDER{}})$-paths of length $\ell$, 
		$Z_c$ a set of $r$ internally disjoint $c$--$\epsilon(v_{\SMALL{}})$-paths of length $\ell$ and
		$Z_d$ as set of $r$ internally disjoint $d$--$\epsilon(u_{\SMALL{}})$-paths of length $\ell$. 
		Add $Z_a\cup Z_b\cup Z_c\cup Z_d$ to $G$.
	\end{enumerate}
	
	Figure \ref{fig:counterGraph} illustrates the structure of $G$.
	If $R\subseteq V(T)$ is a vertex set, we write $\epsilon(R)$ for $\{\epsilon(v): v\in R \cap U\}$
	and if $T'\subseteq T$ is a subgraph of $T$,
	then let $\epsilon(T')$ be the union of all graphs $\epsilon(P)$ for all $U$-paths in $T'$.
	We did not define $\epsilon(v_{\BORDER{}}Tu_{\BORDER{}})$ and $\epsilon(v_{\SMALL{}}Tu_{\SMALL{}})$.
	Therefore, $\epsilon(T_u)$ is only defined when $T_u$ is edge-disjoint from $v_{\BORDER{}}Tu_{\BORDER{}}$ and $v_{\SMALL{}}Tu_{\SMALL{}}$.
	
	\begin{figure}[bht]
		\centering
		\begin{tikzpicture}[scale=0.8]
		\tikzstyle{tinyvx}=[thick,circle,inner sep=0.cm, minimum size=1.5mm, fill=white, draw=black]

		\def\vstep{0.8}
		\def\hstep{0.4}
		\def\hwidth{11}
		\def\hheight{5}
		
		\def\totalheight{\hheight*\vstep}
		\def\totalwidth{\hwidth*\hstep}
		\pgfmathtruncatemacro{\minustwo}{\hwidth-2}
		\pgfmathtruncatemacro{\minusone}{\hwidth-1}
		
		\foreach \j in {0,...,\hheight} {
			\draw[medge] (0,\j*\vstep) -- (\hwidth*\hstep,\j*\vstep);
			\foreach \i in {0,...,\hwidth} {
				\node[tinyvx] (v\i\j) at (\i*\hstep,\j*\vstep){};
			}
		}
		
		\foreach \j in {1,...,\hheight}{
			\node[hvertex] (z\j) at (0.5*\hwidth*\hstep,\j*\vstep-0.5*\vstep) {};
		}
		\pgfmathtruncatemacro{\plusvone}{\hheight+1}
		
		\node[hvertex,fill=hellgrau,label=10:$c$] (z\plusvone) at (0.5*\totalwidth,\totalheight+0.5*\vstep) {};
		\node[hvertex,fill=hellgrau,label=-10:$d$] (z0) at (0.5*\totalwidth,-0.5*\vstep) {};
		\node (c) at (z\plusvone){};
		\node (d) at (z0){};

		\foreach \j in {1,...,\plusvone}{
			\pgfmathtruncatemacro{\subone}{\j-1}
			\draw[line width=1.3pt,double distance=1.2pt,draw=white,double=black] (z\j) to (z\subone);
			\foreach \i in {1,3,...,\hwidth}{
				\draw[medge] (z\j) to (v\i\subone);
			}
		}
		
		\foreach \j in {0,...,\hheight}{
			\foreach \i in {0,2,...,\hwidth}{
				\draw[medge] (z\j) to (v\i\j);
			}
		}

		\pgfmathtruncatemacro{\minusvone}{\hheight-1}
		\node[hvertex,fill=hellgrau,label=260:$a$] (a) at (-\hstep,0.5*\totalheight) {};
		\foreach \j in {0,...,\hheight} {
			\draw[medge] (a) -- (v0\j);
		}
		
		\node[hvertex,fill=hellgrau,label=280:$b$] (b) at (\totalwidth+\hstep,0.5*\totalheight) {};
		\foreach \j in {0,...,\hheight} {
			\draw[medge] (v\hwidth\j) to (b);
		}
		
		\coordinate (aa) at ($(a)+(-1,0)$){};
		\coordinate (bb) at ($(b)+(1,0)$){};
		\coordinate (cc) at ($(c)+(0,1)$){};
		\coordinate (dd) at ($(d)+(0,-1)$){};
		
		\buendel{(a)}{(aa)}
		\buendel{(b)}{(bb)}
		\buendel{(c)}{(cc)}
		\buendel{(d)}{(dd)}
		
		\node (la) at ($(aa)+(0,-1)$){$Z_a$};
		\draw[->] (la)--($0.5*(aa)+0.5*(a)+(0,-0.2)$);
		
		\blase{(aa)}{180}{2.5}{2}{$\epsilon(A)$}
		\blase{(dd)}{270}{1.5}{1.3}{$\epsilon(D)$}
		
		\draw[hedge] (bb) .. controls ++(3,1) and ++(1,3) ..    
		coordinate[pos=0.18](m1)								
		node[hvertex,fill=hellgrau,pos=0.35,label=180:$\epsilon(u_{\SMALLPLUSPLUS})$](u15){} 
		coordinate[pos=0.45](m2)
		coordinate[pos=0.55](m3)
		node[hvertex,fill=hellgrau,pos=0.65, label=270:$\epsilon(u_{\SMALLPLUS})$](u14){}
		coordinate[pos=0.8](m4)
		(cc);
		
		\stengelblase{(u15)}{0.5}{30}{1.5}{1}{$B_{14}$}
		\stengelblase{(u14)}{0.5}{60}{1.5}{1}{$B_{13}$}
		\stengelblase{(m1)}{0.3}{0}{1}{0.7}{}
		\stengelblase{(m2)}{0.3}{40}{1}{0.7}{}
		\stengelblase{(m3)}{0.3}{50}{1}{0.7}{}
		\stengelblase{(m4)}{0.3}{90}{1}{0.7}{}
		\stengelblase{(bb)}{0.5}{290}{1.5}{1}{$B_{15}$}
		\stengelblase{(cc)}{0.3}{160}{1}{0.7}{}
		
		\node at ($0.85*(cc)+0.85*(bb)$){$\epsilon(C)$};
		
		\node[hvertex,fill=hellgrau] at (aa) {};
		\node[hvertex,fill=hellgrau,label=right:$\epsilon(u_{\BORDER})$] at (bb) {};
		\node[hvertex,fill=hellgrau,label=right:$\epsilon(v_{\SMALL})$] at (cc) {};
		\node[hvertex,fill=hellgrau,label=right:$\epsilon(u_{\SMALL})$] at (dd) {};		
		
		\node (aaa) at ($(aa)+(0,1)$){$\epsilon(v_{\BORDER})$};
		\draw[->] (aaa) -- ($(aa)+(0,0.2)$);
		
		\node at ($0.5*(b)+0.5*(bb)+(0,0.5)$){$Z_b$};
		\node at ($0.5*(c)+0.5*(cc)+(-0.5,0)$){$Z_c$};
		\node at ($0.5*(d)+0.5*(dd)+(-0.5,0)$){$Z_d$};			
		
		\end{tikzpicture}	
		\caption{The counterexample graph $G$}\label{fig:counterGraph}
	\end{figure}
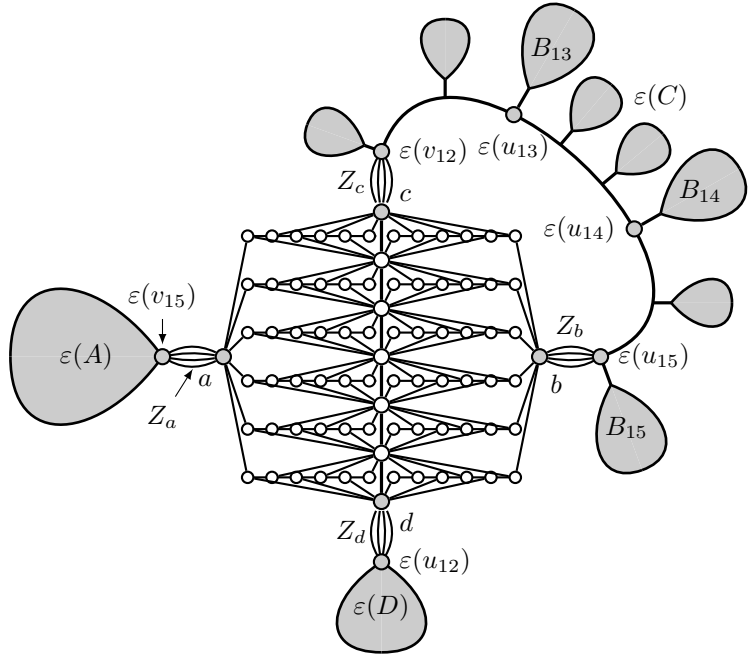
	We often use the subgraph $G'\subseteq G$ which is defined as
	\begin{equation}\label{trees:defGprime}
G' = W \cup \epsilon(C) \cup \epsilon(D) \cup Z_b\cup Z_c\cup Z_d
	\end{equation} 
	and we note that also $G'=G-(\epsilon(A)\cup Z_a - a)$ holds.

We list a number of basic properties of $G$ to which we will appeal later.

	\begin{enumerate}[label=(P\arabic{*})]
		\item \label{treeprops:u16}$\epsilon(\Smin)\cap G'=\{\epsilon(u_{\BORDER})\}$.
		\item \label{treeprops:weight} $|\epsilon(\lev_{10})\cap V(G')| = \wmin$.
		\item \label{treeprops:degree3} If $d_G(x)\ge 3$, then $x\in \epsilon(U)\cup V(W)$.
\item \label{epssep}
Let $v\in U$, and let $T'$ be a component of $T_v-v$ that is disjoint from $P_M\cup\{u_{12}\}$. 
Then $\epsilon(v)$ separates $\epsilon(T')$
from every vertex of degree at least~$3$ in $G-\epsilon(T')$.
\item\label{Usep} Let $v\in U$, and let $U'\subseteq U$ such that $U'$ separates $v$ from $U\setminus \{v\}$ in $T$. 
Then there is a set of $|U'|$ vertices in $\epsilon(U')\cup\{a,b,c,d\}$ 
that separates $\epsilon(v)$ from every other vertex of degree at least~$3$ in $G$.  
	\end{enumerate}

We also have a more complicated property that we formulate as a lemma. 
It, nevertheless, follows immediately from the construction.
\begin{lemma}\label{trees:localStructure}
Let $v\in U\sm V(P_M)$
and let $k$ denote the number of components $T_1,\ldots,T_{k}$ in $T-v$ that intersect $U$.
Then 
\begin{enumerate}[\rm (i)]
\item $G-\epsilon(v)$ has precisely $k$ components; 
\item each of $\epsilon(V(T_1)),\ldots,\epsilon(V(T_{k}))$
 lies in a distinct component of $G-\epsilon(v)$; and
\item for each component $K$ of $G-\epsilon(v)$, there is an $x\in V(K)$ that separates every vertex of degree at least~$3$ of $K$ from $G-K$. 
\end{enumerate}
\end{lemma}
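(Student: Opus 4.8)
The plan is to argue directly from the construction (C1)--(C4) of $G$ and the shape of $T$. The key observation is that $v\in U\sm V(P_M)$ means that $\epsilon(v)$ is a vertex of $G$ of degree~$3$ (as $d_T(v)=3$) each of whose three incident ``bundles'' of length-$\ell$ paths leads into exactly one of the $\epsilon$-images of the three $U$-paths of $T$ emanating from $v$; none of these bundles goes to the condensed wall or to a terminal, precisely because $v$ is not on $P_M$ and hence is not equal to any of $u_{12},\dots,u_{15},v_{12},v_{15}$ and is not an interior vertex of $v_{15}Tu_{15}$ or $v_{12}Tu_{12}$, so each $U$-path at $v$ was genuinely inflated into a bundle of $r$ disjoint paths in step (C\ref{fatedges}) (or, if $v$ is an endvertex of one of the paths feeding $Z_a,Z_b,Z_c,Z_d$, into such a $Z$-bundle, which still behaves the same way). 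First I would record this local picture formally: $\epsilon(v)$ has exactly $k=d_T(v)\le 3$ ``directions'', one per component $T_i$ of $T-v$ meeting $U$, and deleting $\epsilon(v)$ severs these directions from one another.

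For part~(i) and~(ii): each component $T_i$ of $T-v$ that meets $U$ is a subtree of $T$, and $\epsilon(T_i)$ is, by definition, connected and attached to the rest of $G$ only through the bundle of paths corresponding to the $U$-path of $T$ leaving $v$ toward $T_i$; since all those bundles meet only in $\epsilon(v)$, removing $\epsilon(v)$ puts each $\epsilon(V(T_i))$ into a different component. Conversely every vertex and every bundle of $G$ lies in (the $\epsilon$-image of) some $T_i$ once the ``core'' direction at $v$ is cut, so there are exactly $k$ components --- here one uses that $G-T = \epsilon(A)\cup\epsilon(C)\cup\epsilon(D)\cup Z$'s together with $W-T$, and that $v\notin V(P_M)$ forces $T_v$ (the subtree hanging off $v$ away from the root/main path) to be mapped entirely inside one of $\epsilon(A),\epsilon(C),\epsilon(D)$, so the condensed wall $W$ and the other inflated parts all sit on the ``toward-root'' side of $v$, i.e.\ in the single component containing that direction. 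This is essentially property~\ref{epssep} applied to the (at most two) components $T_i$ of $T_v-v$, plus the trivial observation for the remaining component $T-T_v$.

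For part~(iii): fix a component $K$ of $G-\epsilon(v)$; it corresponds to one $T_i$. If $T_i$ is one of the at most two components of $T_v-v$ (the ``downward'' ones), then $K=\epsilon(T_i)$ and I claim $x=\epsilon(v_i)$ works, where $v_i$ is the unique neighbour of $v$ in $T_i$ (or, if that vertex has degree~$2$ in $T$, the first degree-$3$ vertex of $T$ below it); indeed every degree-$\ge 3$ vertex of $K$ lies in $\epsilon(U\cap T_i)$ by~\ref{treeprops:degree3}, and $\epsilon(v_i)$ separates all of $\epsilon(T_i\sm v_i)$ from $\epsilon(v)$ hence from $G-K$, by~\ref{epssep} again (or a direct bundle argument: every path leaving $\epsilon(T_i)$ toward $\epsilon(v)$ passes through the bundle incident with $\epsilon(v_i)$, which meets $\epsilon(T_i)$ only in $\epsilon(v_i)$). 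If instead $K$ is the ``upward'' component (the one containing the root side, $W$, etc.), then $x=\epsilon(v)$'s unique neighbour on that side in $T$ again works for the same reason: all degree-$\ge 3$ vertices of $G$ other than those inside $\epsilon(T_v)$ and the wall vertices lie in $\epsilon(U)$, and cutting at that one $\epsilon$-vertex (the branch vertex of $T$ just above $v$) disconnects the ``$v$-stub'' from the bulk; the wall vertices of degree $\ge 3$ cause no trouble because in $G-\epsilon(v)$ they are all on that same upward side, on the far side of $x$ from $\epsilon(v)$.

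The only point requiring care --- and what I expect to be the main obstacle --- is verifying part~(iii) cleanly in the ``upward'' case, where $K$ contains the condensed wall and all three inflated parts, so one must make sure that a \emph{single} vertex $x$ separates the little $\epsilon(v)$-stub from everything of degree $\ge 3$ in this large component, and that $x$ can indeed be taken inside $\epsilon(U)\cap V(K)$ rather than being $\epsilon(v)$ itself. The resolution is that $v\notin V(P_M)$ guarantees a genuine degree-$3$ branch vertex $w$ of $T$ strictly between $v$ and the main path (namely the first degree-$3$ ancestor of $v$), so the $U$-path from $v$ to $w$ got inflated to a bundle meeting the rest of $G$ only in $\epsilon(w)$; taking $x=\epsilon(w)$ then isolates the stub. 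I would phrase this as: "Since $v\notin V(P_M)$, the $\leq_T$-smallest vertex $w\in U$ with $w>_T v$ satisfies $\epsilon(v w\text{-path})$ is one of the bundles of (C\ref{fatedges}), and $\epsilon(w)$ is the desired separator." Everything else is the routine unwinding of the definitions of $\epsilon$ and $G'$, together with \ref{treeprops:degree3} to control where degree-$\ge 3$ vertices can live.
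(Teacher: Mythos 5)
The paper itself gives no written proof of this lemma (it is asserted to ``follow immediately from the construction''), so the only question is whether your verification is sound; your overall plan --- each direction at $\epsilon(v)$ is a bundle of internally disjoint long paths meeting the rest of $G$ only in its two endvertices, so deleting $\epsilon(v)$ splits $G$ according to the components of $T-v$ that meet $U$, and the bundle endvertex on the $K$-side serves as the separator in (iii) --- is exactly the intended routine unwinding. However, there is one concrete error. You infer that $v\notin V(P_M)$ forces $v$ to be distinct from $u_{12},\dots,u_{15},v_{12},v_{15}$, and later, in what you yourself identify as the main obstacle, you claim that the $\leq_T$-smallest $w\in U$ with $w>_T v$ always satisfies that the $v$--$w$ $U$-path was inflated in step (C3), so that $x=\epsilon(w)$ separates. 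This is false: $P_M$ runs only from $u_{15}$ to $v_{12}$, so the vertices $u_{12}$ and $v_{15}$ (the endvertices of the two \emph{non-inflated} $U$-paths that do not lie on $P_M$) are perfectly admissible choices of $v\in U\setminus V(P_M)$, and the lemma is in fact applied to such vertices (e.g.\ $T_{u_{12}}=D$ is disjoint from $P_M$).

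For $v=u_{12}$ the path $u_{12}Tv_{12}$ is not a (C3) bundle and $\epsilon(u_{12}Tv_{12})$ is not even defined; the wall-containing component $K$ of $G-\epsilon(u_{12})$ is attached to $\epsilon(u_{12})$ through $Z_d$, so a wall vertex such as $c$ reaches $\epsilon(u_{12})$ via $d$ and the $Z_d$-paths, avoiding $\epsilon(v_{12})$ entirely --- your proposed separator $\epsilon(v_{12})$ does not separate. The correct choice is the terminal $x=d$, the $K$-side endvertex of the $Z_d$-bundle; symmetrically, for $v=v_{15}$ the wall-containing component is attached through $Z_a$ and one must take $x=a$ rather than $\epsilon(u_{15})$. (This is precisely why property (P5) allows separators in $\epsilon(U')\cup\{a,b,c,d\}$ rather than in $\epsilon(U)$ alone.) Your parenthetical remark that a $Z$-bundle ``behaves the same way'' contains the right idea, but your explicit resolution contradicts it by insisting $x\in\epsilon(U)\cap V(K)$; the fix is simply to take $x$ to be the $K$-side endvertex of whichever bundle ($Z$-bundle or (C3)-bundle) joins $K$ to $\epsilon(v)$, terminal or not. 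A smaller inaccuracy, which does not break the argument, is the identification of the ``upward'' (root-side) component with the one containing $W$: for $v\in A$ strictly above $v_{15}$, the wall lies in a component of $T_v-v$, but the same bundle-endvertex argument applies there verbatim.
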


The main lemma we prove in this section:

\begin{lemma}\label{trees:mainLem}
Every subdivision of $T$ in $G$ contains an $(a$--$b,c$--$d)$-linkage
that itself is contained in $W$.
\end{lemma}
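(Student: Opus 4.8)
\textbf{Proof plan for Lemma~\ref{trees:mainLem}.}
The plan is to mirror the structure of the proof of Theorem~\ref{ladders:thm}, but replacing the counting of rungs by the weight function $\omega$. Fix a subdivision $\gamma(T)$ of $T$ in $G$, viewing $\gamma$ as a map from vertices of $T$ to vertices of $G$ and from edges of $T$ to paths in $G$. Write $M=\gamma(T)-T$. The key point will be to prove that $\gamma(U)=\epsilon(U)$, i.e.\ that the branch vertices of $\gamma(T)$ must sit exactly on the vertices designated by $\epsilon$, and in particular that the high-level vertices $\gamma(\lev_{10}(T))$ avoid $W-T$. Once this is established, $\{\gamma(u_{15}),\gamma(v_{15})\}$ and $\{\gamma(u_{12}),\gamma(v_{12})\}$ must be $\{a,b\}$ and $\{c,d\}$ in some order, the connecting paths $\gamma(v_{15}Tu_{15})$ and $\gamma(v_{12}Tu_{12})$ must run through $W$ (any excursion through $\epsilon(A)$, $\epsilon(C)$, $\epsilon(D)$ would plant a spurious high-degree vertex in their interior, contradicting injectivity of $\gamma$ combined with $\gamma(U)=\epsilon(U)$), and these two paths are disjoint, so they form the desired $(a$--$b,c$--$d)$-linkage in $W$.

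The heart of the argument is thus the claim $\gamma(U)=\epsilon(U)$, and this is where the weight function enters. First I would argue, exactly as in the ladder proof, that $T$ minus at most four vertices splits into at most five maximal subtrees $T_1,\dots,T_p$, each mapped into a single component of $G-T$, and that each $T_i$ is either met by $T$ or lies wholly inside one piece. The structural facts \ref{upishuge}--\ref{ACD} guarantee that the ``large'' branch count is concentrated: by Lemma~\ref{trees:weightOfBinaryTree} a $B_k$-subtree has weight at least $2^{k-10}-2$, so a subtree containing a $B_{16}$-tree has weight at least $2^6-2=62$, far more than any of $A$, $C$, $D$, or $W-T$ can individually absorb. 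I would quantify this: using \ref{treeprops:weight}, $|\epsilon(\lev_{10})\cap V(G')|=\wmin$, where by choice $\wmin=\omega(T_{u_{15}})$ is the minimum weight of a subtree rooted at a $\leq_T$-minimal $\lev_{15}$-vertex, hence small relative to $\omega(T)-\wmin$; and $W-T$ contains essentially no $\lev_{10}$-image because any $2$-connected-ish binary tree structure forces three disjoint cycles, contradicting the one-layer containment as in Lemma~\ref{ladders:seventeen}. Putting these together with \ref{epssep} and \ref{Usep}, the only way to fit all of $\gamma(\lev_{10}(T))$ into $G$ is to have $\gamma(U)\subseteq\epsilon(U)$; since $\gamma$ and $\epsilon$ are both injective and $|U|$ is finite, this forces $\gamma(U)=\epsilon(U)$ together with the matching $\{\gamma(u_{15}),\gamma(v_{15})\}=\{a,b\}$ etc.

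Concretely, I would phrase the counting as a sequence of displayed inequalities analogous to \eqref{ladders:rungsInWall}--\eqref{ladders:threeComps}: (a) $W-T$ contains at most a small bounded amount of weight of $\gamma(T)$ (via Lemma~\ref{ladders:seventeen}-type cycle-packing obstruction adapted to binary trees and levels), (b) each of $\epsilon(A)$, $\epsilon(C)$, $\epsilon(D)$ can only host weight bounded by the corresponding $\omega(A)$, $\omega(C)$, $\omega(D)$ — using \ref{treeprops:degree3} to know that all level-$10$ images land on $\epsilon(U)$ or on $W$, and \ref{treeprops:weight}, \ref{treeprops:u16} to pin the available $\epsilon$-vertices — and (c) $\omega(A)+\omega(C)+\omega(D)+\text{(wall slack)} < \omega(T)$ unless the split is the ``intended'' one with $\gamma(U)=\epsilon(U)$. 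The choice of constants ($19$ in the pathwidth hypothesis, levels $15,16,17$, height threshold $10$ for weight) is exactly calibrated so that $2^{16-10}-2$ dominates the sum of the ``wrong'' capacities; Lemma~\ref{trees:pwBinaryTree} and \ref{upishuge} supply the relevant $B_{16}$-tree inside $T_r$. Once $\gamma(U)=\epsilon(U)$ is in hand, the conclusion is the short topological argument described above: the two removed $U$-paths $v_{15}Tu_{15}$ and $v_{12}Tu_{12}$, whose images connect $\{a,b\}$ to one another and $\{c,d\}$ to one another, must lie inside $W$ because $G'$ and $\epsilon(A)\cup Z_a$ are glued only along $\{a,b,c,d\}$ and an interior detour would create an extra degree-$\ge 3$ vertex conflicting with injectivity, and these two $W$-paths are vertex-disjoint, giving the $(a$--$b,c$--$d)$-linkage.

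\textbf{Main obstacle.} The delicate part is the bookkeeping in step (b)--(c): showing rigorously that a subtree of $\gamma(T)$ landing in one of $\epsilon(A),\epsilon(C),\epsilon(D)$ cannot carry more $\lev_{10}$-weight than the ``natural'' copy does. This requires Lemma~\ref{trees:localStructure} and properties \ref{epssep}, \ref{Usep} to control how $\gamma(T)$ can wind through the inflated parts — each $\epsilon(P)$ being a bundle of $r$ length-$\ell$ paths means $\gamma(T)$ could in principle route edges idiosyncratically, and one must verify that the separator structure (each component of $G-\epsilon(v)$ has a cut vertex isolating its high-degree part) forces the branch vertices of $\gamma(T)$ onto the $\epsilon(U)$-skeleton. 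I expect this to need a careful induction on the tree order $\leq_T$, peeling off $\leq_T$-minimal $\lev_{k}$-vertices and matching them to $\epsilon$-images level by level, with the weight inequality providing the global contradiction that rules out every alternative placement.
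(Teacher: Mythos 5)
There is a genuine gap, and it lies at the very centre of your plan: the claim $\gamma(U)=\epsilon(U)$ is false in general for this construction, so the ladder argument (where the analogous statement \eqref{ladders:branchVertices} does hold) does not transfer. Two phenomena break it. First, branch vertices of $T$ of \emph{low} level can perfectly well be embedded at wall vertices: the wall has arbitrarily many vertices of high degree, and the only exclusion available is Lemma~\ref{HDlem}~(i) (no $B_{10}$-tree fits inside $W$); a degree-$3$ vertex of $T$ with small subtrees hanging off it can sit anywhere in $W$. Even for the high-level vertices, the paper's Lemma~\ref{trees:atMostTwo} does \emph{not} give $\gamma(\lev_{10})\subseteq\epsilon(\lev_{10})$ — it only shows that at most two vertices of $\gamma(\lev_{10})$ are displaced, and that these lie in $W$ (via Menger through the four terminals), which is the best one can say. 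Second, $\gamma$ may permute structurally comparable subtrees of $T$ among the inflated parts: nothing prevents $\gamma$ from mapping $T_{u_{15}}$ into $\epsilon(T_t)$ for a different $t\in \Smin$ inside $\epsilon(A)$, with some other subtree taking its place in $G'$. This is exactly why the paper needs the level-invariance Lemma~\ref{trees:lambdaInvariance}, the signature comparison (Lemma~\ref{trees:mapSignature}) and the iterated ``chasing'' argument of Lemma~\ref{trees:treeOutsideA}, whose whole point is that one can only locate \emph{some} $s^*\in \Smin$ with $\gamma(T_{s^*})\subseteq G'$ and $\sigma(T_{s^*})\ge\sigma(T_{u_{15}})$, not the intended one. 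Consequently your further conclusions — $\{\gamma(u_{15}),\gamma(v_{15})\}=\{a,b\}$, $\{\gamma(u_{12}),\gamma(v_{12})\}=\{c,d\}$, and that the linkage is the image of the two removed $U$-paths — are also unavailable; the paper explicitly remarks that both cases $\gamma(s^*)=\epsilon(u_{15})$ and $\gamma(s^*)\in V(W)$ genuinely occur, and in the second case a branch vertex of $\gamma(T)$ lies inside the wall and the linkage is extracted from quite different pieces of $\gamma(T)$ (the $a$--$b$-path $P'$ and a $c$--$d$-path inside $\gamma(R_2)$).

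A secondary problem is the global counting scheme in your steps (a)--(c). The total $\epsilon$-capacity of $G$ equals the total weight of $T$ exactly (every $\lev_{10}$-vertex has precisely one $\epsilon$-image), so an inequality of the form $\omega(A)+\omega(C)+\omega(D)+\text{(wall slack)}<\omega(T)$ ``unless the split is the intended one'' cannot be made to work: every placement respects the capacities up to the slack of $2$ from Lemma~\ref{trees:atMostTwo}, including many unintended ones. The paper's mechanism is local rather than global: the $\pm 2$ slack of Lemma~\ref{trees:atMostTwo} is fed into the separator criterion of Lemma~\ref{trees:subtree2} to pin down where whole subtrees $\gamma(T_s)$, $s\in\Smin$, can go, and the weight/signature bookkeeping (together with the minimal choice of $u_{15}$ in \ref{wmin}) is what rules out the wrong placements one step at a time; the wall's contribution is bounded by pathwidth plus Menger, not by a cycle-packing argument in the style of Lemma~\ref{ladders:seventeen}. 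So the proposal as written would not close, and the missing ideas are precisely the level function, the signatures, and the subtree-chasing argument.
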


Assuming the lemma to be true, we can finish the proof of Theorem~\ref{trees:thm}.

\begin{proof}[Proof of Theorem~\ref{trees:thm}]
We first note that $\epsilon$ is made in such a way that 
there is a subdivision $\tau$ of $T$ in $G$ such that $\tau|_{U}=\epsilon|_{U}$.
Moreover, because of Observation~\ref{wallNoHit} and
because of~\ref{fatedges},\ref{fatedges2},
this 
remains true even
if we delete up to $r-3$ edges from $G$. In particular, any edge hitting set for subdivisions of $T$
will need to have size at least~$r-2$. 

Lemma~\ref{trees:mainLem}, on the other hand, combined with Observation~\ref{trees:wallNoTwo}
shows that $G$ does not contain two edge-disjoint subdivisions of $T$.
\end{proof}

\subsection{Some preparation}

	In the rest of this subsection we prove some lemmas
	that follow from the decomposition of $T$ and the construction of $G$
	and are independent from the $T$-subdivision we will choose later.

	\begin{lemma}\label{trees:epsTrees}
		Let $v\in U$ be a vertex such that $T_v$ is disjoint 
		from $P_M$.
		If $\epsilon(T_v)$ contains an $\epsilon(v)$-linked $B_{\mu}$-tree,
		then also $T_v$ contains a $v$-linked $B_{\mu}$-tree.
	\end{lemma}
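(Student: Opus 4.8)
The plan is to project a $B_\mu$-tree from $\epsilon(T_v)$ back down into $T_v$. First I would record the shape of $\epsilon(T_v)$: since $T_v$ misses $P_M$, the graph $\epsilon(T_v)$ is disjoint from the wall $W$, and it is precisely the graph obtained from the minimal subtree $T'$ of $T_v$ spanning $U\cap V(T_v)$ by replacing each $U$-path with $r$ internally disjoint paths of length $\ell$ (call these paths a \emph{bundle}); by~\ref{treeprops:degree3}, every vertex of $\epsilon(T_v)$ of degree at least~$3$ is $\epsilon(u)$ for some $u\in U\cap V(T_v)$. Let $\pi$ be the map collapsing each bundle back onto the $U$-path it inflated; then $\pi$ maps $\epsilon(T_v)$ onto $T'\subseteq T_v$, and $\pi^{-1}(u)=\{\epsilon(u)\}$ for every $u\in U\cap V(T_v)$.

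Let $S=S_\mu\cup Q$ be an $\epsilon(v)$-linked $B_\mu$-tree, with $S_\mu$ a subdivision of the binary tree $B$ of height $\mu$ and root $\rho$. The cases $\mu\le 1$ I would settle by hand: if $\epsilon(T_v)$ contains any bundle at all, then $T_v$ has two vertices of $U$, the lower of which has degree~$3$ in $T_v$, and this already forces a $v$-linked $B_1$-tree in $T_v$ (and $\mu=0$ is trivial). For $\mu\ge 2$ the crucial point is a dichotomy: an edge of $S_\mu$ joining two branch vertices, if it meets the interior of one of the $r$ parallel paths of a bundle, must traverse that whole parallel path — its endpoints are $\epsilon$-images of vertices of $U$, so it cannot stop inside a bundle. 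From this I obtain (i) $\rho=\epsilon(z)$ for some $z\in U\cap V(T_v)$ ($\rho$ has degree~$3$ in $S$ if $Q$ is non-trivial, and $\rho=\epsilon(v)$ if $Q$ is trivial); and (ii) writing $S^-$ for the subtree of $S_\mu$ obtained by deleting the $2^\mu$ leaf-edges — a subdivision of the binary tree of height $\mu-1$ — the restriction $\pi|_{S^-}$ is injective, because two edges of $S^-$ running through two parallel paths of a common bundle would make $S_\mu$ contain a cycle. Hence $\pi(S^-)$ is a $B_{\mu-1}$-tree inside $T_v$ with root $z$.

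It remains to upgrade this $B_{\mu-1}$-tree to a $B_\mu$-tree. Each of its leaves equals $\epsilon^{-1}$ of a branch vertex of $S_\mu$, hence is a vertex of $U$ distinct from $v$, and therefore has degree~$3$ in $T_v$; exactly one of its three directions is used by $\pi(S^-)$, so I attach a fresh pendant edge in each of the other two directions. A short argument in the tree $T_v$ (if the free neighbourhoods of two distinct leaves of $\pi(S^-)$ overlapped, the unique path in $T_v$ between those leaves would avoid $\pi(S^-)$ internally, yet it must pass through branch vertices of $\pi(S^-)$) shows the augmented graph is a $B_\mu$-tree rooted at $z$ contained in $T_v$. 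Finally, the two edges of $S_\mu$ incident with $\rho$ enter the two \emph{downward} bundles at $\epsilon(z)$: the only other bundle at $\epsilon(z)$ is the one towards $v$, which is occupied by $Q$, and an $S_\mu$-edge sharing that bundle with $Q$ down to $\epsilon(z)$ would again close a cycle. Therefore the constructed $B_\mu$-tree sits inside $T_z$, while the $v$--$z$ path of $T_v$ leaves $z$ in the upward direction; glued along $z$, they form the required $v$-linked $B_\mu$-tree inside $T_v$.

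The part I expect to be fussiest is observation~(ii) — and, with it, the placement of the root's two edges — both of which rest on the ``miss a parallel path entirely or swallow it whole, and $S_\mu$ is acyclic'' dichotomy; once that is in place, the remaining direction-chasing in $T_v$ (that $\pi(S^-)$ is a faithful subdivision, that the fresh edges are pairwise disjoint, and that everything stays inside $T_z$) is routine.
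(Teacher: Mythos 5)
Your proposal is correct, but it takes a genuinely different route from the paper's proof. The paper argues by induction on $\mu$: via Lemma~\ref{trees:localStructure} it shows that an $\epsilon(v)$-linked $B_\nu$-tree living in the component of $G-\epsilon(v)$ that contains $\epsilon(T_w)$ (with $w$ the $\leq_T$-maximal $U$-vertex of that branch) forces an $\epsilon(w)$-linked $B_\nu$-tree inside $\epsilon(T_w)$, and then either walks down to the root of the given tree (if it is not $\epsilon(v)$) or splits at $\epsilon(v)$ into two branches and applies the induction hypothesis with $\nu=\mu-1$. You instead argue directly: you make the inflation structure of $\epsilon(T_v)$ explicit, note that since interior vertices of the parallel paths have degree~$2$ and all branch vertices lie in $\epsilon(U)$ by~\ref{treeprops:degree3}, every branch-to-branch path of the subdivision either misses a parallel path or traverses it completely, and acyclicity of $S$ lets each bundle be used at most once; you then project $S_\mu$ minus its bottom level into $T_v$ and re-grow that level using that the projected leaves are genuine degree-$3$ vertices of $T_v$, with the root-placement argument (the link path $Q$ occupies the upward bundle at $\epsilon(z)$, so the two root edges enter the downward bundles) placing everything inside $T_z$ so that the $v$--$z$ path can serve as the link. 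The discard-and-regrow step is precisely what is needed because leaf edges of the subdivision may terminate in the interior of a bundle — a wrinkle the paper's induction never has to face since it never projects. Your route is more self-contained (it does not use Lemma~\ref{trees:localStructure}) and makes the reason for the statement transparent, at the cost of the routine verifications you flag (injectivity of the projection, disjointness of the attached pendant edges, containment of the pendant edges in $T_z$); these do all go through, each by the uniqueness of paths in the tree $T_v$ together with the degree observation above, so I see no gap.
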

\begin{proof}
Clearly, the statement holds for $\mu=0$.
Hence we may assume that $\mu\geq 1$.

Observe that by our construction of $G$, if $T_v-v$ does not intersect $U$, then $\epsilon(T_v)=\epsilon(v)$ and $\mu =0$. 
Hence we may assume that $T_v-v$ intersects $U$.
Suppose $T'$ is a component of $T_v-v$ that intersects $U$ and let $w$ be the $\leq_T$-maximal vertex in $U\cap V(T')$.
Let $K$ be the component of $G-\epsilon(v)$ that contains $\epsilon(T_w)$ given by Lemma~\ref{trees:localStructure}.
Let $x\in V(K)$ be the vertex that separates every vertex of degree at least $3$ of $K$ from $G-K$ also given by Lemma~\ref{trees:localStructure}.
We claim the following:
\begin{equation}\label{downcontain}
\begin{minipage}[c]{0.8\textwidth}\em
If $G[K\cup \{\epsilon(v)\}]$ contains for some $\nu\geq 2$ an $\epsilon(v)$-linked $B_{\nu}$-tree~$F'$,
then $\epsilon(T_{w})$ contains an $\epsilon(w)$-linked $B_\nu$-tree.
\end{minipage}\ignorespacesafterend 
\end{equation} 
As $x=\epsilon(w)$, by \ref{fatedges}, the root of $F'$, which has degree at least~$3$, lies 
in $\epsilon(T_{w})$. 
In addition, $F'\cap\epsilon(T_{w})$ is an $\epsilon(w)$-linked $B_\nu$-tree.
This proves~\eqref{downcontain}.

\medskip

Now let $F$ be an $\epsilon(v)$-linked $B_\mu$-tree in $\epsilon(T_v)$.
We may assume that $\epsilon(v)$ contains no $B_{\mu+1}$-tree.
We already observed above that $V(T_v-v)\cap U \neq \es$.
Hence \ref{fatedges} implies the statement for $\mu=1$.
Thus, we assume $\mu\geq 2$. 
Suppose the root of $F$ is different from $\epsilon(v)$. 
Then it lies in $\epsilon(T_{w})$ for some $\leq_T$-maximal vertex in $V(T_v-v)\cap U$.
Then, by~\eqref{downcontain},
the graph $\epsilon(T_{w})$ contains 
an $\epsilon(w)$-linked $B_\mu$-tree. 
In this case, we replace $v$ by $w$
and proceed with the proof. 

If, on the other hand, the root of $F$ is equal to $\epsilon(v)$, then
$T_v-v$ contains two components that contain a vertex in $U$ (here we exploit Lemma~\ref{trees:localStructure} and $\mu\geq 2$).
This implies the statement for $\mu=2$; so assume from now on that $\mu\geq 3$.
Let $w_1,w_2$ the unique $\leq_T$-maximal vertices in the two components of $T_v-v$ that belong to $U$.
Observe that Lemma~\ref{trees:localStructure} yields two distinct components $K_1,K_2$ of $G-\epsilon(v)$ and vertices $x_1,x_2$ such that 
$\epsilon(T_{w_i})$ lies in $K_i$  and $x_i$ separates every vertex of degree at least 3 of $K_i$ from $G - K_i$.
Now, for each of $i\in [2]$, we can apply~\eqref{downcontain} with $\nu=\mu-1\geq 2$
in order to find an $\epsilon(w_i)$-linked $B_{\mu-1}$-tree in $\epsilon(T_{w_i})$.
With induction on $\mu$ we then find for each $i\in [2]$ a $w_i$-linked $B_{\mu-1}$-tree in $T_{w_i}$,
which finishes the proof. 
\end{proof}
	
\begin{lemma}\label{HDlem}$\,$
\begin{enumerate}[\rm (i)]
\item $W$ does not contain any $B_{10}$-tree.
\item
Let $w\in V(W)$. Every $w$-linked $B_{15}$-tree
that is contained in $G'$ contains a vertex in $\epsilon(C)$.
\end{enumerate}
\end{lemma}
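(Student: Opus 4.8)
For part~(i), the plan is to recall from Observation~\ref{wallTreewidth} that a condensed wall has pathwidth at most~$5$, hence treewidth at most~$5$. On the other hand, by Lemma~\ref{trees:pwBinaryTree} a $B_{10}$-tree has pathwidth $\lceil \tfrac12(10+1)\rceil = 6$, and since a subgraph cannot have larger pathwidth than its host, a graph of pathwidth at most $5$ cannot contain any $B_{10}$-tree as a subgraph. (If one wants to argue without invoking pathwidth of $W$ directly, one can instead remove $a,b$ — after which $W$ has pathwidth at most~$3$ — and note that $a$ and $b$ together can destroy at most two of the three disjoint $v$-linked $B_9$-subtrees guaranteed inside a $B_{10}$-tree; but the pathwidth comparison is the clean route.) This part is essentially immediate.

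For part~(ii), let $w\in V(W)$ and let $F$ be a $w$-linked $B_{15}$-tree contained in $G'=W\cup\epsilon(C)\cup\epsilon(D)\cup Z_b\cup Z_c\cup Z_d$. Suppose for contradiction that $F$ avoids $\epsilon(C)$. The idea is to analyse how $F$ can be distributed over the pieces of $G'$. The pieces $\epsilon(D)$ and $Z_d$ are attached to the rest of $G'$ only through $d$ (and the interface vertices $\epsilon(u_{12})$ inside $\epsilon(D)$, $\epsilon(u_{11})$, etc.), so $\{d\}$ — or a single vertex near $d$ — separates $\epsilon(D)\cup Z_d$ from $W\cup Z_b$; similarly $Z_b$ meets $W$ only in $b$ and meets $\epsilon(C)$ only in $\epsilon(u_{15})$, so with $\epsilon(C)$ removed, $\{b\}$ separates $Z_b$ from the rest. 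Consequently, if $F$ omits $\epsilon(C)$ then $F$ lives inside $W\cup Z_b\cup \epsilon(D)\cup Z_d$, a graph in which $W$ is separated from each of the appendages $Z_b$, $\epsilon(D)\cup Z_d$ by a single cut vertex ($b$, resp.\ $d$). A $B_{15}$-tree, being $2$-connected only in the trivial sense but having no cut vertex whose removal leaves a component with small pathwidth other than along its own structure, must then have all but a ``dangling path'' worth of its structure inside one of these pieces: more precisely, deleting the cut vertex $b$ (or $d$) from a $B_{15}$-tree leaves pieces of which at most one can be large, so $F$ minus a bounded-pathwidth appendage lies in $W$ or in $\epsilon(D)\cup Z_d$ or in $Z_b$. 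But $W$ contains no $B_{10}$-tree by part~(i), $Z_b$ is a union of internally disjoint paths (pathwidth~$\le 2$, no $B_2$-tree even after adding one vertex), and $\epsilon(D)\cup Z_d$ has small pathwidth since $D=T_{u_{12}}$ contains no $B_{13}$-tree (by the minimality in~\ref{ukleaves}, $u_{12}\in L_{12}$ is $\leq_T$-minimal in $L_{12}$, so below it there is no $\lev_{13}$-vertex, hence no $v$-linked $B_{13}$-tree with root properly below, and by Lemma~\ref{trees:epsTrees} this transfers to $\epsilon(D)$). Combining: the $B_{15}$-tree $F$ would have to have height at least~$15$ concentrated in a piece that supports height at most roughly $12$–$13$, a contradiction once one checks that a bounded cut cannot ``stitch'' two low-height pieces into a high binary tree. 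Hence $F$ must use a vertex of $\epsilon(C)$.

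The main obstacle I expect is the precise bookkeeping in part~(ii): translating ``$G'$ is a chain of pieces glued along single vertices (up to the interface vertices $\epsilon(u_{15}),\epsilon(v_{12}),\epsilon(u_{12})$ etc.)'' into a clean statement that a $B_{15}$-tree avoiding $\epsilon(C)$ cannot exist, because a binary tree is not $2$-connected and so one must carefully control how a cut vertex can split it — the correct quantitative claim is that removing one vertex from a $B_h$-tree leaves at most three components, at most one of which still contains a $B_{h-2}$-tree, and iterating this across the $O(1)$ cut vertices separating $W$, $Z_b$, $\epsilon(D)$, $Z_d$ forces the bulk of the tree into a single low-complexity piece. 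I would formalise this with an auxiliary observation of the form ``if $G_1,G_2$ share at most one vertex, then a $B_h$-tree in $G_1\cup G_2$ yields a $B_{h-1}$-tree in $G_1$ or in $G_2$'', apply it a constant number of times, and conclude using part~(i) together with the pathwidth bounds on $Z_b$, $\epsilon(D)\cup Z_d$, and $D$.
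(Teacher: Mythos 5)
Your part~(i) is correct and is exactly the paper's argument: pathwidth at most $5$ for $W$ (Observation~\ref{wallTreewidth}) versus pathwidth $6$ for a $B_{10}$-tree (Lemma~\ref{trees:pwBinaryTree}). For part~(ii), however, there is a genuine gap, and it is quantitative. First, your key factual claim that $D=T_{u_{12}}$ contains no $B_{13}$-tree is false: \ref{ukleaves} states explicitly that $T_{u_{12}}$ contains a $B_{13}$-tree with root $u_{12}$ (and hence $\epsilon(D)$ contains one as well). What the minimality of $u_{12}$ within $L_{12}$ rules out --- and only after combining Lemma~\ref{trees:epsTrees} with the external tree from~\ref{upishuge} --- is an ($\epsilon(u_{12})$-linked) $B_{14}$-tree in $\epsilon(D)$, one level higher than you assume. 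Second, with this corrected bound your proposed iteration of the (correct) gluing observation ``a $B_h$-tree in $G_1\cup G_2$ with $|V(G_1)\cap V(G_2)|\le 1$ yields a $B_{h-1}$-tree in $G_1$ or in $G_2$'' no longer produces a contradiction: starting from the $B_{15}$-tree and splitting at the cut vertex $d$ and then at $\epsilon(u_{12})$, you lose one level at each step and arrive only at ``$\epsilon(D)$ contains a $B_{13}$-tree'', which is perfectly possible. So the height bookkeeping fails by precisely the two levels your scheme gives away at the cut vertices. (A smaller omission: $Z_c$ is missing from your list of pieces; parts of $F$ in $Z_c$ avoiding $\epsilon(C)$ are dangling paths at $c$, so this is repairable.)

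The paper closes this gap by not losing any height at the cuts: it splits the $B_{15}$-tree at its root into two \emph{disjoint} $B_{14}$-subtrees $F_1,F_2$, each extendable to a $w$-linked $B_{14}$-tree. Disjointness means at most one of them can lie behind the cut vertex towards $\epsilon(D)$; if one did, it would be (together with its linking path) an $\epsilon(u_{12})$-linked $B_{14}$-tree inside $\epsilon(D)$, which Lemma~\ref{trees:epsTrees}, \ref{upishuge} and the $\leq_T$-minimality of $u_{12}$ in $L_{12}$ forbid. The other $B_{14}$-subtree therefore avoids $\epsilon(D)$ entirely, and since the interiors of the $Z$-paths contain no vertices of degree at least~$3$, all of its branch vertices and the paths between them would have to lie in $W$ if it also avoided $\epsilon(C)$ --- contradicting part~(i). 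If you want to keep your ``glue along single vertices'' framework, you would have to strengthen it so that the subtree surviving a cut keeps its full height on the side away from $\epsilon(D)$ (for instance by tracking both root branches and using their disjointness, which is in effect what the paper does), rather than discarding a level per cut vertex.
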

\begin{proof}
(i)
As $W$ has pathwidth at most~$5$, 
by Observation~\ref{wallTreewidth},
but $B_{10}$-trees have pathwidth at least~$6$, by Lemma~\ref{trees:pwBinaryTree}, 
it follows
that $W$ cannot contain any $B_{10}$-tree.

(ii)
Let $F\subseteq G'$ be the  $w$-linked $B_{15}$-tree, and denote 
by $F_1,F_2$ two disjoint  $B_{14}$-trees in $F$ 
that can each be extended to an $w$-linked $B_{14}$-tree in $F$.
If one of $F_1,F_2$
is contained in $\epsilon(D)$, then $\epsilon(D)$ contains 
a $\epsilon(u_{12})$-linked $B_{14}$-tree.
As $D=T_{u_{12}}$ by~\ref{ACD} and as $P_M$ is disjoint from $T_{u_{12}}$
by~\ref{ukmain} and~\ref{linorder},
Lemma~\ref{trees:epsTrees}  implies that $D$ contains a 
$u_{12}$-linked $B_{14}$-tree. Then there exists some vertex $v<_T u_{12}$ 
such that $T_v$ contains a $v$-linked $B_{13}$-tree, which together 
with~\ref{upishuge} implies that 
$v\in L_{12}$. This, however, contradicts
 the $\leq_T$-minimality of $u_{12}$ within $L_{12}$
(by~\ref{ukleaves}).

Thus, at most of one $F_1,F_2$ may meet $\epsilon(D)$; the other, $F_2$ say, 
is disjoint from $\epsilon(D)$. 
By~(i), no $B_{10}$-tree lies completely in $W$, which means that $F_2$, 
as a $B_{14}$-tree, meets $\epsilon(C)$.  
\end{proof}

We stick to the following convention for the rest of this section:
	We denote by $s,t,u,v,w$ always vertices in $T$
	and $x,y$ are vertices in $G$.
	Trees in $G$ are called $F$ (with some index) and subtrees of $T$ have name $T$ (with some index).

	\begin{lemma}\label{trees:lambdaInvariance}
		Suppose $v\in U$ and $\lambda_T(v)\le \BORDER{}$.
		Then $\lambda_G( \epsilon(v))=\lambda_T(v)$ if $\lambda_T(v)\geq 2$;
and  $\lambda_G( \epsilon(v))\in \{0,1\}$ if $\lambda_T(v)\leq 1$.
	\end{lemma}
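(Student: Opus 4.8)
The plan is to prove the two inequalities $\lambda_G(\epsilon(v))\le\lambda_T(v)$ and $\lambda_G(\epsilon(v))\ge\lambda_T(v)$ separately, using Lemma~\ref{trees:epsTrees} for the latter and a separator argument (via~\ref{Usep}) for the former. For the lower bound, suppose $\lambda_T(v)=h\ge 2$, so $T$ contains three $v$-linked $B_h$-trees meeting only in $v$ whose roots differ from $v$; since $T$ is subcubic and $v\in U$ has degree~$3$, these three trees live in the three components of $T-v$, one of them inside $T_v$. Applying $\epsilon$ edge by edge and using that the construction replaces each $U$-path by $r\ge 1$ internally disjoint paths, the images of these three $v$-linked $B_h$-trees are three $\epsilon(v)$-linked $B_h$-trees in $G$ meeting only in $\epsilon(v)$ with roots different from $\epsilon(v)$; hence $\lambda_G(\epsilon(v))\ge h$. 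When $\lambda_T(v)\le 1$ there is nothing to prove for the lower bound since $\lambda_G\ge 0$ always and $\lambda_G\ge 1$ follows from $d_G(\epsilon(v))=3$ together with~\ref{treeprops:degree3} and the fact that $\epsilon(v)$ still has three edge-disjoint paths to degree-$3$ vertices.

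For the upper bound, suppose for contradiction that $\lambda_G(\epsilon(v))\ge h+1$ where $h=\lambda_T(v)\ge 1$; so $G$ contains three $\epsilon(v)$-linked $B_{h+1}$-trees $F_1,F_2,F_3$ meeting only in $\epsilon(v)$, their roots $\ne\epsilon(v)$. The key point is to relate these back to subtrees of $T$. Here I would use Lemma~\ref{trees:epsTrees}: at least two of the $F_i$ avoid $a,b,c,d$ (or more carefully, one can route around the terminals since a $B_{h+1}$-tree has many leaves and only four terminals), and then the portion of $F_i$ inside a single $\epsilon(T_u)$-block — which exists because $G-\epsilon(v)$ has exactly $d_T(v)$ components by Lemma~\ref{trees:localStructure}, each eventually funneling through a separator vertex $x=\epsilon(w)$ — yields a $w$-linked $B_{h}$-tree inside $\epsilon(T_w)$ for the appropriate $\le_T$-maximal $w\in U$. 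Crucially we need $T_w$ disjoint from $P_M$ to apply Lemma~\ref{trees:epsTrees}; this is where the hypothesis $\lambda_T(v)\le 15$ enters, together with~\ref{ukmain},~\ref{linorder}: the main path $P_M$ runs through the region of high-level vertices, and any $v$ with $\lambda_T(v)\le 15$ that is relevant here has $T_v$ either genuinely below $u_{12}$ or in a branch hanging off, hence disjoint from $P_M$. Pulling back via Lemma~\ref{trees:epsTrees} gives three $v$-linked $B_{h+1}$-trees in $T$ meeting only in $v$, contradicting $\lambda_T(v)=h$.

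The main obstacle I expect is the careful case analysis around the terminals $a,b,c,d$ and around $P_M$: a $B_{h+1}$-tree in $G$ could in principle use the condensed wall $W$ or pass through terminals, so one must argue that the ``combinatorial skeleton'' of each $F_i$ — its branch vertices and the topology connecting them — is forced, by~\ref{treeprops:degree3} and Lemma~\ref{trees:localStructure}, to sit inside a single inflated subtree $\epsilon(T_w)$ (possibly after discarding a bounded part near the terminals, which is harmless since $h+1 - O(1)$ is still large enough because $B_h$-trees shrink only by a constant in height when a bounded set is removed). One also needs to handle the degenerate cases $h\in\{0,1,2\}$ by hand, as Lemma~\ref{trees:epsTrees} and the $\nu\ge 2$ hypothesis in~\eqref{downcontain} do not apply there; but for small $h$ the statement is either vacuous or follows directly from~\ref{fatedges} and~\ref{treeprops:degree3}. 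Once the reduction to a single $\epsilon(T_w)$-block is in place, Lemma~\ref{trees:epsTrees} does the real work and the contradiction with $\le_T$-minimality / with the definition of $\lambda_T$ is immediate.
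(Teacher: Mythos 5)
Your overall strategy---lower bound by embedding $T$ into $G$ via $\epsilon$, upper bound by pulling a putative $\epsilon(v)$-linked $B_{\mu+1}$-tree back into $T$ through the separator structure of $G$ (Lemma~\ref{trees:localStructure}, \ref{epssep}, \ref{Usep}) and Lemma~\ref{trees:epsTrees}---is the same as the paper's. The genuine gap sits exactly at the point you describe as ``where the hypothesis $\lambda_T(v)\le 15$ enters''. Your claim that every relevant $v$ with $\lambda_T(v)\le 15$ has the relevant subtree disjoint from $P_M$ is false: the lemma must be applied to vertices $v\in U$ lying on the main path $P_M$ itself (for instance $v\in\{u_{13},u_{14},u_{15}\}$, whose levels are at most $15$, and likewise any branch vertex in the interior of $P_M$), and for such $v$ the child subtree $T_{w_1}$ below $v$ that lacks a $w_1$-linked $B_{\mu+1}$-tree meets $P_M$ and contains $u_{12}$. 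For that subtree $\epsilon(T_{w_1})$ is not even defined (the construction omits $\epsilon$ on $v_{15}Tu_{15}$ and $v_{12}Tu_{12}$), the corresponding region of $G$ is attached to the condensed wall through two terminals, and Lemma~\ref{trees:epsTrees} is inapplicable because its hypothesis requires $T_{w_1}$ to be disjoint from $P_M$. So the reduction ``to a single $\epsilon(T_w)$-block'' breaks precisely in this case; it is not a degenerate case that can be waved away, and your suggestion of ``discarding a bounded part near the terminals'' does not repair it, since a linked $B_{\mu+1}$-tree may run arbitrarily deep through $W\cup\epsilon(C)\cup\epsilon(D)$ and is not controlled by a bounded-height truncation.

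The paper devotes the second half of its proof to exactly this situation: it shows that if $T_{w_1}$ meets $P_M$ or contains $u_{12}$, then in fact $u_{12}\in V(T_{w_1})$, hence $\mu\ge 13$, and by the $\le_T$-minimal choices in \ref{ukleaves} and \ref{ukmain} one gets $v=u_\mu$; minimality of $u_\mu$ within $L_\mu$ then forces the other child subtree $T_{w_2}$ to also lack a $w_2$-linked $B_{\mu+1}$-tree, and since $u_{12}$ lies in at most one of $T_{w_1},T_{w_2}$, the ``clean'' argument (via \ref{Usep}, \ref{epssep} and Lemma~\ref{trees:epsTrees}) can be rerun with $w_2$ in place of $w_1$. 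Some argument of this kind is indispensable for the upper bound; without it your proof establishes $\lambda_G(\epsilon(v))\le\lambda_T(v)$ only for vertices $v$ whose lower subtrees avoid $P_M$ and $u_{12}$, which is not the full statement of the lemma.
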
	
	\begin{proof}
The statement for $\lambda_T(v)=1$ follows easily by our construction.
Hence we may assume from now that $\lambda_T(v)\geq 2$.
First, we always have $\lambda_G(\epsilon(v))\ge \lambda_T(v)$,
as we can find $T$ as a subdivision in $G$ such that $\epsilon(v)$ is the corresponding branch vertex of $v$.
	
Next, we set $\mu:=\lambda_T(v)$ and prove that 
		$\lambda_G(\epsilon(v))\leq \mu$.
If $\mu\leq 1$, then $v$ can be separated from $U\setminus \{v\}$ by at most two vertices in $T$.
With~\ref{Usep} it follows that also $\lambda_G(\epsilon(v))\leq 1$. 
Thus, we assume that $\mu\geq 2$. 
		Denote by $w_1,w_2$ the $\leq_T$-largest two vertices in $U$ with $v>w_1$ and $v>w_2$.
Note that $v$ is not the root of $T$ as $\lambda_T(v)\leq 15$ but the root 
lies in $L_{16}$ by~\ref{upishuge}. Thus, again by~\ref{upishuge} and by~$\mu\leq 15$, 
it follows that for one $i\in\{1,2\}$
		the tree $T_{w_i}$ does not contain any $w_i$-linked $B_{\mu+1}$-tree.
		We assume that this is the case for $i=1$. 
		
Assume first that $T_{w_1}$ is disjoint from $P_M$ and $u_{12}$,
and suppose that $\lambda_G(\epsilon(v))\geq\mu+1$.
Then,~\ref{Usep} and a look at Figure~\ref{fig:counterGraph} shows that 
there is a $\epsilon(v)$-linked $B_{\mu+1}$-tree $F$ in $G$ such that the path 
between $\epsilon(v)$ and the root of $F$ passes through~$\epsilon(w_1)$.
It follows from~\ref{epssep} that $\epsilon(T_{w_1})$ contains an $\epsilon(w_1)$-linked
$B_{\mu+1}$-tree. Then, however, an application of 
Lemma~\ref{trees:epsTrees} yields a $w_1$-linked $B_{\mu+1}$-tree in $T_{w_1}$, which we had excluded.

Therefore, we assume from now on that
		$T_{w_1}$ intersects $P_M$ or contains $u_{12}$. 
		By~\ref{ukleaves}, there is a $u_{\BORDER}$-linked $B_{16}$-tree in $T_{u_{\BORDER}}$,
		which means because of $\mu<16$ that $u_{\BORDER}$ cannot lie in $T_{w_1}$.
Then, as $T_{w_1}$ intersects $P_M$ or contains $u_{12}$, we 
see that $v\in V(P_M)$, and as a consequence of~\ref{ukmain}
 that
		\begin{equation}\label{rosencrantz}
		u_{12}\in V(T_{w_1}).
		\end{equation}
We deduce that $T_{u_{12}}\subseteq T_{w_1}$, 
which by~\ref{ukleaves} implies that $T_{w_1}$ contains a $w_1$-linked $B_{13}$-tree,
i.e.\ that
$\mu\geq 13$. 
		
		As there is no $w_1$-linked
		$B_{\mu+1}$-tree in $T_{w_1}$ it follows that no vertex in $T_{w_1}$ 
lies in $L_{\mu}$. Since $v\in L_\mu\cap V(P_M)$ we see 
with~\ref{linorder} that $v$ is $\leq_T$-minimal
		within $L_\mu\cap V(P_M)$. As $\mu\in\{13,14,15\}$ it follows  
		from~\ref{ukleaves} and~\ref{ukmain} that $v=u_{\mu}$.
		Then, by~\ref{ukleaves}, $v$ is $\leq_T$-minimal within $L_\mu$, which 
		in particular together with~\ref{upishuge}
		implies that $T_{w_2}$ cannot contain any $w_2$-linked $B_{\mu+1}$-tree.
		
		We can now exchange the roles of $w_1$ and $w_2$ and deduce in the same
		way that either $\lambda_G(\epsilon(v))\leq \mu$ or that
		$u_{12}\in V(T_{w_2})$. As $u_{12}$ can lie in only one of $T_{w_1}$
		and $T_{w_2}$, we must have $\lambda_G(\epsilon(v))\leq \mu$.
	\end{proof}

	\subsection{$L_{10}$-vertices of $\gamma(T)$}

	We split the proof of Lemma \ref{trees:mainLem}
	in several smaller pieces.
	In addition that we fixed $T$ already before,
	we now also fix some $T$-subdivision $\gamma(T)$ in $G$, 
	where we think of $\gamma$ as the map that maps every vertex $t\in V(T)$ to a vertex $\gamma(t)$ in $G$
	and every edge $st\in E(T)$ to a $\gamma(s)$--$\gamma(t)$-path in $G$
	such that $\gamma(st)\cap \gamma(uv)=\emptyset$ if $\{s,t\}\cap \{u,v\}=\emptyset$
	and  $\gamma(st)\cap \gamma(uv)=\{\gamma(s)\}$ if the edges $st$ and $uv$ share the vertex $s$.
	
	For $F\subseteq G$, we define $\omega_{\gamma}(F)=|\gamma(L_{10})\cap V(F)|$.
	If $F$ is a subgraph of $G$, the reader may think of the number $|\epsilon(L_{10})\cap V(F)|$
	as the \emph{capacity} of $F$
	while $\omega_\gamma(F)=|\gamma(L_{10})\cap V(F)|$ may be called the \emph{utilization} of $F$.
	A key step in the proof of Lemma \ref{trees:mainLem} is to
	show that the utilization and the capacity of any subgraph of $G$ differ at most by 2.
	
	\begin{lemma}\label{trees:atMostTwo}
		The set $\gamma(\lev_{10}) \setminus \epsilon(\lev_{10})$ is contained in $W$
		and $|\gamma(\lev_{10}) \setminus \epsilon(\lev_{10})|=
		|\epsilon(\lev_{10}) \setminus \gamma(\lev_{10}) |\le 2$.
		In particular, for any subgraph $G_1\subseteq G$,
		it holds  that
		\[
		|\epsilon(L_{10})\cap V(G_1)| +2 \ge |\gamma(L_{10})\cap V(G_1)| \ge |\epsilon(L_{10})\cap V(G_1)| -2.
		\]
	\end{lemma}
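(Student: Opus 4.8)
The plan is to analyse the vertices of $\gamma(L_{10})$ according to where they live in $G$, using the decomposition $G = W \cup \epsilon(A)\cup\epsilon(C)\cup\epsilon(D)\cup Z_a\cup Z_b\cup Z_c\cup Z_d$. First I would observe that any vertex $y\in\gamma(L_{10})$ has degree at least $3$ in $\gamma(T)$, hence at least $3$ in $G$; by property~\ref{treeprops:degree3} this forces $y\in\epsilon(U)\cup V(W)$. So the only way to be in $\gamma(L_{10})\setminus\epsilon(L_{10})$ is either to lie in $V(W)$, or to be some $\epsilon(u)$ with $u\in U$ but $u\notin L_{10}$. The first bullet of the lemma claims the second possibility does not occur, i.e.\ $\gamma(L_{10})\setminus\epsilon(L_{10})\subseteq V(W)$. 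To rule out $\epsilon(u)$ with $u\notin L_{10}(T)$ but $\epsilon(u)\in L_{10}(G)$, I would invoke Lemma~\ref{trees:lambdaInvariance}: if $\lambda_T(u)\le 15$ then $\lambda_G(\epsilon(u)) = \lambda_T(u)$ (or both are in $\{0,1\}$), so $\epsilon(u)\in L_{10}(G)$ would give $\lambda_G(\epsilon(u))\ge 10$, hence $\lambda_T(u)\ge 10$, i.e.\ $u\in L_{10}(T)$ — contradiction. The only gap is when $\lambda_T(u)\ge 16$, i.e.\ $u$ is high up in the tree; here $u$ is on or above the root region, and one checks directly from the construction (essentially~\ref{upishuge} and the placement of $\epsilon(A)$ at $a$) that such $\epsilon(u)$ also sits in $L_{10}(G)$ only if $\lambda_T(u)\ge 10$, which again holds. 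This establishes the inclusion $\gamma(L_{10})\setminus\epsilon(L_{10})\subseteq V(W)$.

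Next I would bound the size. Since $\gamma$ and $\epsilon$ are injective on $U$ and $L_{10}(T)\subseteq U$, the two sets $\gamma(L_{10})$ and $\epsilon(L_{10})$ have the same cardinality, so $|\gamma(L_{10})\setminus\epsilon(L_{10})| = |\epsilon(L_{10})\setminus\gamma(L_{10})|$ automatically. For the bound by $2$: the vertices of $\gamma(L_{10})$ that land in $W$ are branch vertices of $\gamma(T)$ lying in $V(W)$; I want to argue $W$ can host at most two such vertices. A vertex $y\in V(W)$ that is in $\gamma(L_{10})$ is the branch vertex $\gamma(v)$ of some $v\in L_{10}(T)$, and hence there are three $\gamma(v)$-linked $B_{10}$-subtrees in $\gamma(T)$ meeting only in $\gamma(v)$; in particular $y$ lies in three internally disjoint paths of $\gamma(T)$ inside $G$, and each of the three ``branches'' off $y$ must reach the rest of $G$ through the cut set $T_W = \{a,b,c,d\}$ of $W$. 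Since $W-\{a,b,c,d\}$ contains no $B_{10}$-tree at all — indeed no $B_{10}$-tree by Lemma~\ref{HDlem}(i) together with Observation~\ref{wallTreewidth} and Lemma~\ref{trees:pwBinaryTree} — every one of the three $B_{10}$-trees hanging off $y$ must use a vertex of $\{a,b,c,d\}$ to escape $W$, and different escape vertices for $y$-branches that are internally disjoint. A counting/separation argument then shows that the total number of branch vertices $\gamma(v)$ in $V(W)$ arising from $L_{10}(T)$-vertices $v$ is at most $2$: roughly, with four terminals available, having three escape-disjoint branches already costs three terminals per such $y$, and the $\gamma$-images of distinct $v\in L_{10}(T)$ force the usage patterns to overlap too much to allow three or more.

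The final ``in particular'' is then purely set-theoretic: for any $G_1\subseteq G$,
\[
|\gamma(L_{10})\cap V(G_1)| - |\epsilon(L_{10})\cap V(G_1)|
\;\le\; |\gamma(L_{10})\setminus\epsilon(L_{10})| \;\le\; 2,
\]
and symmetrically $|\epsilon(L_{10})\cap V(G_1)| - |\gamma(L_{10})\cap V(G_1)| \le |\epsilon(L_{10})\setminus\gamma(L_{10})| \le 2$, giving the displayed double inequality.

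The main obstacle I expect is the $\le 2$ bound on branch vertices inside $W$: the inclusion into $V(W)$ is a clean consequence of Lemma~\ref{trees:lambdaInvariance}, but extracting the exact constant $2$ requires a careful argument about how many $L_{10}(T)$-branch vertices can simultaneously be realised in $W$ given only the four-terminal cut $\{a,b,c,d\}$ and the fact that $W$ itself carries no $B_{10}$-tree — essentially a Menger-type bookkeeping on how the three disjoint $B_{10}$-branches of each such vertex must exit $W$, and why two such vertices can coexist but not three. This is where I would spend the bulk of the effort; the rest follows from injectivity of $\gamma,\epsilon$ on $U$ and from~\ref{treeprops:degree3}.
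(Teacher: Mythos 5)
Your handling of the containment and of the bookkeeping around it matches the paper. For the containment you argue exactly as the paper does: a vertex of $\gamma(\lev_{10})$ has degree at least $3$ in $G$, so by \ref{treeprops:degree3} it lies in $\epsilon(U)\cup V(W)$, and if it were $\epsilon(u)$ for some $u\in U\setminus \lev_{10}$, then $\lambda_T(u)\le 9\le 15$ and Lemma~\ref{trees:lambdaInvariance} gives $\lambda_G(\epsilon(u))\le 9$, a contradiction. (Your separate case ``$\lambda_T(u)\ge 16$'' is vacuous: such a $u$ already lies in $\lev_{10}$, so it is not a candidate for the difference set.) Likewise, the equality $|\gamma(\lev_{10})\setminus\epsilon(\lev_{10})|=|\epsilon(\lev_{10})\setminus\gamma(\lev_{10})|$ from injectivity and the final ``in particular'' are exactly as in the paper.

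The genuine gap is the bound $|\gamma(\lev_{10})\cap V(W)|\le 2$, which is the heart of the lemma and which you explicitly leave as ``a Menger-type bookkeeping'' to be done later. Moreover, the heuristic you offer --- that three escape-disjoint branches ``cost three terminals per such $y$'' --- cannot be the right accounting: if each such vertex consumed three of the four terminals, then already \emph{two} such vertices would be impossible, whereas the lemma (and the construction) permits two; so the per-vertex terminal count proves too much and must be replaced by something finer. The missing idea, which is the paper's actual argument, is a disjoint-paths count: suppose $\gamma(\lev_{10})\cap V(W)$ contained three vertices. Since $W$ contains no $B_{10}$-tree by Lemma~\ref{HDlem}~(i), each of the three $x$-linked $B_{10}$-trees attached at each of these vertices must reach $G-W$. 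Working inside the tree $\gamma(T)$, of these nine branches at most four can point towards the other two chosen vertices along the subtree they span, so the three vertices give rise to at least five paths from $\gamma(\lev_{10})\cap V(W)$ to $G-W$ that are disjoint outside their starting vertices; each of these must use a distinct vertex of $\{a,b,c,d\}$, contradicting Menger's theorem, because these four vertices separate $\gamma(\lev_{10})\cap V(W)$ from $G-W$. Without this (or an equivalent) count, your proposal establishes the containment in $W$ but not the constant~$2$, on which the later weight arguments (Lemmas~\ref{trees:subtree2}--\ref{trees:treeOutsideA}) depend.
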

	\begin{proof}
		Since $\epsilon$ as well as $\gamma$ map injectively a single vertex of $T$ to a single vertex of $G$,
		it holds that $|\gamma(\lev_{10})|=|\lev_{10}|=|\epsilon(\lev_{10})|$ and consequently
		$|\gamma(\lev_{10}) \setminus \epsilon(\lev_{10})|=|\epsilon(\lev_{10}) \setminus \gamma(\lev_{10}) |$.
		We will show now that $\gamma(\lev_{10}) \setminus \epsilon(\lev_{10})$
		is contained in $W$ and consists of at most two vertices.
		
		Assume first that there is an $x\in \gamma(\lev_{10}) \setminus (\epsilon(\lev_{10})\cup V(W))$.
		Since $x\in \gamma(\lev_{10})$,
		we observe that $\lambda_G(x)\ge 10$ and $d_G(x)\ge 3$.
		As it is not contained in $\epsilon(\lev_{10})$ but has degree at least 3, 
		there is a vertex $v\in \lev_i \setminus \lev_{10}$ for some $i\in \{0,\ldots, 9\}$
		with $x=\epsilon(v)$.
		As $\lambda_T(v)\le 9$, Lemma~\ref{trees:lambdaInvariance} shows $\lambda_G(x)=\lambda_G(\epsilon(v))=\lambda_T(v)\le 9$,
		which is a contradiction to $\lambda_G(x)\ge 10$. Hence $\gamma(\lev_{10}) \setminus \epsilon(\lev_{10})\subseteq V(W)$.
		
		Assume now that $|\gamma(L_{10})\cap V(W)|\ge 3$. 
		No $B_{10}$-tree is completely contained in $W$, by Lemma~\ref{HDlem}~(i).
		Hence, for every vertex $x\in \gamma(\lev_{10})\cap V(W)$,
		there are three $x$-linked $B_{10}$-trees that only meet in $x$
		such that each of these trees contains a vertex of $G-W$
		and therefore contains a distinct terminal of $W$.
		Now, three vertices in $\gamma(\lev_{10})\cap V(W)$
		give rise to at least five disjoint paths from $\gamma(\lev_{10})\cap V(W)$ to $G-W$,
		which contradicts Menger's theorem because the four vertices $\{a,b,c,d\}$
		separate $\gamma(\lev_{10})\cap V(W)$ from $G-W$.
		This proves the lemma.
	\end{proof}

\begin{lemma}\label{ADlem}$\,$
\begin{enumerate}[\rm (i)]
\item $\gamma(T)$ meets $\epsilon(A)$ and $\epsilon(D)$. In particular, 
$\gamma(T)$ contains $a$ and $d$.
\item $\omega(D)\geq 6$ and $\omega_\gamma(\epsilon(D))\geq 4$.
\end{enumerate}
\end{lemma}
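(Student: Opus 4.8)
I would base the proof on the capacity estimate of Lemma~\ref{trees:atMostTwo}, which gives for every subgraph $G_1\subseteq G$ that $\omega_\gamma(G_1)=|\gamma(L_{10})\cap V(G_1)|\ge |\epsilon(L_{10})\cap V(G_1)|-2$; as soon as the right-hand side is positive, $\gamma(T)$ (which contains all of $\gamma(L_{10})$) must meet $G_1$. So the plan is: first prove the tree-side weight bounds $\omega(D)\ge 6$ and $\omega(A)\ge 252$; then note that every $v\in L_{10}\cap V(A)$ has $\epsilon(v)\in V(\epsilon(A))$, and likewise for $D$ (this is immediate from the construction: the only non-inflated $U$-paths are $v_{15}Tu_{15}$ and $v_{12}Tu_{12}$ by~\ref{fatedges}, so each $L_{10}$-vertex of $A$, being an endpoint of an inflated $U$-path lying inside $A$, maps into $\epsilon(A)$), so that $|\epsilon(L_{10})\cap V(\epsilon(A))|\ge \omega(A)$ and $|\epsilon(L_{10})\cap V(\epsilon(D))|\ge \omega(D)$; and finally apply Lemma~\ref{trees:atMostTwo} to $G_1=\epsilon(A)$ and $G_1=\epsilon(D)$.

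The bound for $D$, and with it the assertion $\omega_\gamma(\epsilon(D))\ge 4$ of part~(ii), come almost for free: $D=T_{u_{12}}$ by~\ref{ACD}, the tree $T_{u_{12}}$ contains a $B_{13}$-tree by~\ref{ukleaves}, so $\omega(D)\ge 2^{13-10}-2=6$ by Lemma~\ref{trees:weightOfBinaryTree}, whence $\omega_\gamma(\epsilon(D))\ge 6-2=4$. The bound $\omega(A)\ge 252$ is the step I expect to be the real obstacle, since $A=T-T_{u_{15}}-(v_{15}Tu_{15}-v_{15})$ is defined by deletions and its carrying weight is not visible a priori. Here I would exploit the root $r\in L_{17}$: as $T$ is subcubic, $r$ has degree~$3$, so $T-r$ has three components $S_1,S_2,S_3$, and since $r\in\lev_{17}(T)$ there are three $r$-linked $B_{17}$-trees meeting pairwise only in $r$; each uses its unique edge at $r$, these three edges are distinct, so the three trees lie in distinct components $S_i$, and each $S_i$ therefore contains a $B_{17}$-tree, of weight $\ge 2^{17-10}-2=126$ by Lemma~\ref{trees:weightOfBinaryTree}. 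Since $u_{15}$ is $\le_T$-minimal in $L_{15}$ by~\ref{wmin} while $r$ is $\le_T$-maximal and $L_{15}$ has a vertex strictly below $r$ (for instance inside one of these $B_{17}$-trees, by the same reasoning as in Lemma~\ref{trees:weightOfBinaryTree}), we get $u_{15}\ne r$; say $u_{15}\in V(S_1)$. Then $T_{u_{15}}\subseteq S_1$ and, as $v_{15}\ge_T u_{15}$, the deleted set $T_{u_{15}}\cup(v_{15}Tu_{15}-v_{15})$ also lies in $S_1$, so by~\ref{ACD} the part $A$ contains all of $S_2$ and $S_3$; hence $\omega(A)\ge\omega(S_2)+\omega(S_3)\ge 252$, and Lemma~\ref{trees:atMostTwo} gives $\omega_\gamma(\epsilon(A))\ge 250$.

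It remains to package this into~(i). From $\omega_\gamma(\epsilon(A))>0$ and $\omega_\gamma(\epsilon(D))>0$ I conclude that $\gamma(T)$ meets both $\epsilon(A)$ and $\epsilon(D)$. For the ``in particular'' part I would use the decomposition recorded around~\eqref{trees:defGprime}: $G=G'\cup\epsilon(A)\cup Z_a$ with $(\epsilon(A)\cup Z_a)\cap G'=\{a\}$ and $a\notin V(\epsilon(A))$, so $a$ is a cut vertex of $G$ separating $V(\epsilon(A))\setminus\{a\}$ from $V(\epsilon(D))\subseteq V(G')$; as $\gamma(T)$ is connected and meets both sides, it contains $a$. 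Symmetrically, $\epsilon(D)\cup Z_d$ is attached to the rest of $G$ only at $d$, with $d\notin V(\epsilon(D))$, and since $\gamma(T)$ meets $V(\epsilon(D))\setminus\{d\}$ as well as $V(\epsilon(A))$ (which lies outside $\epsilon(D)\cup Z_d$), it contains $d$. The only routine verifications I would spell out are the ``each $L_{10}$-vertex of $A$, resp.\ $D$, maps into $\epsilon(A)$, resp.\ $\epsilon(D)$'' claim and the elementary incidence facts about the terminals $a,d$ and the subgraphs $\epsilon(A),\epsilon(D)$, all of which are immediate from~(C1)--(C4).
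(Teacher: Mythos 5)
Your proposal is correct and takes essentially the same route as the paper: lower-bound the weights of $A$ and $D$ by exhibiting large binary subtrees (Lemma~\ref{trees:weightOfBinaryTree}), transfer these bounds to $\omega_\gamma(\epsilon(A))$ and $\omega_\gamma(\epsilon(D))$ via Lemma~\ref{trees:atMostTwo}, and deduce $a,d\in\gamma(T)$ from connectivity through the cut vertices $a$ and $d$. The only (harmless) difference is your weight bound for $A$, obtained via the three branches at the root $r\in L_{17}$, where the paper simply invokes~\ref{upishuge} to place a $B_{13}$-tree inside $A$; both give the needed bound of~$6$.
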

\begin{proof}
By~\ref{upishuge},~\ref{ukleaves} and~\ref{ACD},
both trees $A$ and $D$ contain a $B_{13}$-tree, and thus have, 
by Lemma~\ref{trees:weightOfBinaryTree}, a weight of at least~$2^3-2\geq 6$.
Lemma~\ref{trees:atMostTwo} implies that $\omega_\gamma(\epsilon(A))\geq 6-2=4$
and $\omega_\gamma(\epsilon(D))\geq 4$. Therefore, $\gamma(T)$ meets
both of $\epsilon(A)$ and $\epsilon(D)$. 
\end{proof}

	\subsection{Embedding outside $A$}
	\label{trees:subtreeMapping}
	
	Recall that $\Smin\subseteq V(T)$ is the set
	of $\leT$-minimal vertices within $L_{\BORDER{}}$; see~\ref{wmin}.
	A crucial step in the proof of Lemma~\ref{trees:mainLem}
	is to locate $T_s$ for $s\in \Smin$.
	The following lemma provides a handy criterion
	to exclude parts of $G$.
	
\begin{lemma}\label{trees:subtree2}
Let $G_1\subseteq G$ be separated by a  vertex $x\in V(G_1)$
from $G-G_1$, and let there be $y\in V(G-G_1)$ that separates all 
vertices of degree at least~$3$ in $G-G_1$ from $G_1$. 
Let $v\in L_{13}$, and assume that $\gamma(v)\in V(G_1)$ and that
$|\epsilon(L_{10})\cap V(G-G_1)|\ge \omega(T_v)-2$.
		Then, $\gamma(T_v)\subseteq G_1$.
	\end{lemma}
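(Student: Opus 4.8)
The statement is essentially a "weight bookkeeping" argument: $G_1$ is a part of $G$ that hangs off the rest via a single cut vertex $x$, and on the other side everything of degree $\ge 3$ funnels through $y$. I want to show that the whole subtree $\gamma(T_v)$ stays inside $G_1$. The natural strategy is to suppose not, and count $L_{10}$-vertices. First I would observe that since $\gamma(v)\in V(G_1)$ and $x$ is the only vertex of $G_1$ adjacent to $G-G_1$, any part of $\gamma(T_v)$ that leaves $G_1$ must pass through $x$; moreover once it is in $G-G_1$, any branch vertex it uses there lies "behind" $y$, so in fact the portion of $\gamma(T_v)$ outside $G_1$ behaves like a subtree rooted at (or just past) $y$.

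\medskip

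The key step is to bound the weight of that escaping portion. If $\gamma(T_v)\not\subseteq G_1$, then $\gamma(T_v)$ contains a $\gamma(w)$-linked sub-binary-tree sitting essentially in $G-G_1$ for some descendant $w\le_T v$; by Lemma~\ref{trees:weightOfBinaryTree} and the height bounds coming from~\ref{upishuge}/\ref{ukleaves}, such a piece carries a definite amount of weight, and in particular the weight of $T_v$ that is \emph{realised} outside $G_1$ is bounded above by $|\gamma(L_{10})\cap V(G-G_1)|$. By Lemma~\ref{trees:atMostTwo} this is at most $|\epsilon(L_{10})\cap V(G-G_1)|+2$. Combining with the hypothesis $|\epsilon(L_{10})\cap V(G-G_1)|\ge \omega(T_v)-2$ would, if we are not careful, still leave slack; so the real content is to show that whenever a positive amount of $\gamma(T_v)$ escapes into $G-G_1$, it must escape with \emph{strictly more} weight than the leftover capacity permits. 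Concretely: the part of $\gamma(T_v)$ inside $G_1$ must still realise the bulk of $\omega(T_v)$, because the branch vertices of $T_v$ cannot all be pushed out through the single bottleneck $y$ — $T$ is subcubic, so the subtree that genuinely lives past $y$ has bounded branching structure, hence bounded weight, and that bound will conflict with $\omega(T_v)$ minus the available capacity on the $G_1$ side. This is where I expect the argument to be most delicate: translating "a subtree of $T_v$ is embedded behind a single cut vertex $y$ in $G$" into a quantitative ceiling on how many $L_{10}$-vertices of $T_v$ can be mapped there, using that $y$ separates all degree-$\ge 3$ vertices on that side.

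\medskip

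So the plan is: (1) assume for contradiction $\gamma(T_v)\not\subseteq G_1$; (2) since $\gamma(v)\in V(G_1)$, find the "first" descendant subtree $T_w$ of $T_v$ whose image is entirely in $G-G_1$, so that $\gamma(T_w)$ is squeezed behind $y$ and $\gamma(w)$ or its image into $G_1$ goes through $x$; (3) use Lemma~\ref{trees:epsTrees}-style reasoning together with the bottleneck at $y$ to argue that only a boundedly-branching sliver of $T_v$ can be hosted in $G-G_1$, i.e. $\omega_\gamma(\gamma(T_v)\cap(G-G_1))$ is small; (4) since $\gamma(T_v)$ realises all $\omega(T_v)$ many $L_{10}$-vertices of $T_v$ (injectivity of $\gamma$, and these vertices all have degree $\ge 3$), the complementary part inside $G_1$ realises at least $\omega(T_v)$ minus that small sliver; (5) but $\gamma(L_{10})\cap V(G-G_1)$ has size at most $|\epsilon(L_{10})\cap V(G-G_1)|+2$ by Lemma~\ref{trees:atMostTwo}, while the hypothesis forces $|\epsilon(L_{10})\cap V(G-G_1)|\ge\omega(T_v)-2$; chasing these inequalities against the genuine structural restriction at $y$ yields a contradiction, unless in fact nothing escaped. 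The main obstacle, as noted, is step (3)–(4): pinning down exactly how much weight a subtree of a subcubic tree can deposit on the far side of a single separating vertex $y$, and doing so tightly enough that the $\pm 2$ slack from Lemma~\ref{trees:atMostTwo} does not swallow the argument. I would handle this by invoking~\ref{epssep}/\ref{Usep} to see that past $y$ the image has no room to re-branch back toward $G_1$, so the escaped subtree is, up to subdivision, a single $\gamma(w)$-linked binary tree, whose weight is controlled by its height, which in turn is bounded by the level bounds $\lev_{15}$, $\lev_{16}$ we have for the relevant vertices.
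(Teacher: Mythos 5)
The core of your plan runs in the wrong direction, and as written it ends in no contradiction. In the paper's argument the hypothesis $|\epsilon(L_{10})\cap V(G-G_1)|\ge \omega(T_v)-2$ is exploited as a \emph{lower} bound on the utilization of $G-G_1$: by Lemma~\ref{trees:atMostTwo} at least $\omega(T_v)-4$ vertices of $\gamma(L_{10})$ lie in $G-G_1$, and once a piece of $\gamma(T_v)$ escapes through $x$, all of $\gamma(T)\cap(G-G_1)$ --- hence all of these heavy vertices --- lies inside $\gamma(T_v)$. The decisive step, which your plan never takes, is to use $v\in L_{13}$ to split $\gamma(T_v)$ at $\gamma(v)$ into two edge-disjoint $\gamma(v)$-linked $B_{13}$-trees $F_1,F_2$, each of utilization at least $6$ by Lemma~\ref{trees:weightOfBinaryTree}. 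The bottleneck $y$ is then used only to show that at most one of $F_1,F_2$ can reach a degree-$\ge 3$ vertex of $G-G_1$ (both contain $\gamma(v)\in V(G_1)$, so each would have to pass through $y$, but they are internally disjoint). Consequently all $\ge\omega(T_v)-4$ heavy $\gamma$-vertices of $G-G_1$ sit in one branch, say $F_1$, while $F_2$ contributes a further $6$, giving $\omega(T_v)\ge(\omega(T_v)-4)+6$, the desired contradiction.

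Your step (3) --- that only a ``boundedly-branching sliver'' of $T_v$ can be hosted in $G-G_1$, the escaped part being a single linked binary tree whose weight is ``controlled by its height, which in turn is bounded by the level bounds'' --- is neither available nor the right statement. Nothing bounds the height or weight of subtrees of $T_v$ from above: $v\in L_{13}$ is a \emph{lower} bound on its level, and the levels $15$ and $16$ in~\ref{upishuge} and~\ref{ukleaves} concern specific vertices of $T$, not $v$ or its descendants. Moreover $G-G_1$ can host an arbitrarily heavy subtree behind $y$; in the actual applications of this lemma (e.g.\ inside Lemma~\ref{trees:rootMapping}) $G-G_1$ is a component containing $\epsilon(T_i)$ of weight at least $\wmin$, so the separator $y$ restricts how branches of $\gamma(T_v)$ can reach back to $G_1$, not how much weight sits beyond it. Even granting such an upper bound, your steps (4)--(5) yield no contradiction: if only a light piece of $\gamma(T_v)$ escaped while the bulk of $\omega(T_v)$ were realised inside $G_1$, none of your inequalities would be violated, since the hypothesis makes the capacity of $G-G_1$ large rather than small. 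Finally, the ``first descendant subtree $T_w$ mapped entirely into $G-G_1$'' in your step (2) need not exist (possibly only subdivided edge segments escape); the paper avoids this by arguing directly with the two branches of $\gamma(T_v)$ at $\gamma(v)$.
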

	\begin{proof}
As $v\in L_{13}$, the tree $\gamma(T_v)$ splits into two edge-disjoint trees $F_1,F_2$
such that each is a $\gamma(v)$-linked $B_{13}$-tree.
By looking at pre-images of $F_1,F_2$ and by applying 
Lemma~\ref{trees:weightOfBinaryTree}, we see that 
\(
\omega(F_i)=|\gamma(L_{10})\cap V(F_i)|\geq 6
\)
for each $i\in [2]$.

Suppose that one of $F_1,F_2$ meets $G-G_1$. 
Then $\gamma(T)\cap (G-G_1)\subseteq \gamma(T_v)=F_1\cup F_2$
as $x$ separates $G_1$ from $G-G_1$ and as $\gamma(T_v)$ meets $G_1$, by assumption.
 
As $\omega(T_v)\geq \omega(F_1)\geq 6$ and using the assumptions of the lemma, it follows that
$|\epsilon(L_{10})\cap V(G-G_1)|\ge \omega(T_v)-2\geq 4$,
which means, by  Lemma~\ref{trees:atMostTwo}, that 
one of $F_1,F_2$, say $F_1$, must meet a vertex in $\epsilon(L_{10})\cap V(G-G_1)$.
Then, $F_1$ in particular contains a vertex of degree at least~$3$ 
in $G-G_1$, which implies that $y\in V(F_1)$, and then that $F_2$ is disjoint 
from $\gamma(L_{10})\cap V(G-G_1)$, and in turn that 
$\gamma(L_{10})\cap V(G-G_1)\subseteq V(F_1-x)$.
We apply Lemma~\ref{trees:atMostTwo} again and obtain
\begin{align*}
|\gamma(L_{10})\cap V(F_1-x)|& \geq
|\gamma(L_{10})\cap V(G-G_1)|\\
& \geq |\epsilon(L_{10})\cap V(G-G_1)|-2\geq 
\omega(T_v)-4.
\end{align*}
We continue
\begin{align*}
\omega(T_v)& = |\gamma(L_{10})\cap V(\gamma(T_v))| 
= |\gamma(L_{10})\cap V(F_1-x)| + |\gamma(L_{10})\cap V(F_2)|\\
& \geq \omega(T_v)-4 + 6 > \omega(T_v),
\end{align*}
		which is impossible.
		Therefore,  $F_1\cup F_2$ does not meet $G-G_1$ and hence, $\gamma(T_v)\subseteq G_1$.
	\end{proof}

	Lemma~\ref{trees:rootMapping}
	further elaborates on the location of $\gamma(T_s)$
	for $s\in \Smin$ and $\omega(T_s)\le \wmin +2$.

	\begin{lemma}\label{trees:rootMapping}
		Let $s\in \Smin$ and $\omega(T_s)\le \wmin+2$.
		Then, $\gamma(s)\in \epsilon(\Smin) \cup V(W)$.
		Moreover, if $\gamma(s)=\epsilon(t)$ for some $t\in \Smin\sm \{u_{\BORDER}\}$, 
		then $\gamma(T_s)\subseteq \epsilon(T_{t})$.
	\end{lemma}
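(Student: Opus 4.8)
The plan is to prove the two assertions of Lemma~\ref{trees:rootMapping} in turn, letting the preparatory lemmas do the heavy lifting and deducing the second assertion cleanly from Lemma~\ref{trees:subtree2}. For the \emph{reduction} in the first assertion: since $d_T(s)=3$ and $\gamma$ realises a subdivision of $T$, the vertex $\gamma(s)$ has degree at least~$3$ in $G$, so $\gamma(s)\in\epsilon(U)\cup V(W)$ by~\ref{treeprops:degree3}. If $\gamma(s)\in V(W)$ we are done, so suppose $\gamma(s)=\epsilon(v)$ with $v\in U$. Carrying through $\gamma$ the three $s$-linked $B_{15}$-trees of $T$ that exist because $s\in L_{15}$ produces three $\epsilon(v)$-linked $B_{15}$-trees in $G$ meeting only in $\epsilon(v)$, so $\lambda_G(\epsilon(v))\ge 15$; the contrapositive of Lemma~\ref{trees:lambdaInvariance} then forces $v\in L_{15}$. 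If $v\le_T u_{15}$, the $\le_T$-minimality of $u_{15}$ within $L_{15}$ gives $v=u_{15}\in\Smin$, so it remains only to rule out the case that $v\in L_{15}$ is not $\le_T$-minimal. In that case $v\neq u_{15}$, hence (again by minimality of $u_{15}$) $v\not\le_T u_{15}$, so $v\notin T_{u_{15}}$ and therefore $v\in A$ — and in particular $v\notin V(P_M)$ because $P_M\subseteq T_{u_{15}}$.

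The contradiction against this last possibility is the heart of the proof. By Lemma~\ref{trees:localStructure}, $G-\epsilon(v)$ has exactly three components, one per component of $T-v$, each with a separating ``gate'' vertex, and the two $B_{15}$-subtrees hanging off $\gamma(s)=\epsilon(v)$ in $\gamma(T_s)$ get routed into these components. Since $\omega(T_s)\le\wmin+2$ is small while, by the identity $|L_{10}|=\omega(A)+\wmin$, the complement $\gamma(T-T_s)$ carries weight at least $\omega(A)-2$, and since $W$ together with $\epsilon(C)$, $\epsilon(D)$ and a large part of $\epsilon(A)$ lie in a single one of the three components (using that $\gamma(T)$ meets $\epsilon(A)$ and contains $a,d$ by Lemma~\ref{ADlem}), one can determine where each piece goes. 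Splitting according to whether $v$ is $\le_T$-incomparable to $u_{15}$ or $v>_T u_{15}$, one then applies Lemma~\ref{trees:subtree2} with $G_1$ the union of $\epsilon(v)$ with the two ``downward'' components, checking $|\epsilon(L_{10})\cap V(G-G_1)|\ge\omega(T_s)-2$ from $\omega(T_s)\le\wmin+2$ and $T_v\subseteq A$ in the incomparable case (and directly inside $\epsilon(A)$ in the case $v>_T u_{15}$, where $\epsilon(T_v)$ has no meaning). This forces $\gamma(T_s)$ into that downward part, so both $B_{15}$-subtrees descend strictly below $\epsilon(v)$; Lemma~\ref{trees:epsTrees} pulls them back to two $B_{15}$-trees below $v$ in $T$, and a final weight count using Lemma~\ref{trees:atMostTwo}, Lemma~\ref{trees:weightOfBinaryTree}, properties~\ref{treeprops:u16}--\ref{treeprops:weight} and the fact that $u_{15}\in\Smin$ was chosen with $\omega(T_{u_{15}})=\wmin$ minimal delivers the contradiction. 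I expect this to be the main obstacle: it needs a careful, simultaneous accounting of $L_{10}$-capacities and utilisations over the three components of $G-\epsilon(v)$, and the sub-case $v>_T u_{15}$ is delicate because one must work directly with the attachment of $\epsilon(A)$ to $W$ through $Z_a$ rather than with $\epsilon(T_v)$.

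For the \emph{second assertion}, suppose $\gamma(s)=\epsilon(t)$ with $t\in\Smin\sm\{u_{15}\}$. As above $t\in A$, and since $t$ and $u_{15}$ are distinct $\le_T$-minimal vertices of $L_{15}$ they are $\le_T$-incomparable; hence $T_t$ is disjoint from $P_M\cup\{u_{12}\}$, we have $T_t\subseteq A$ (so $\omega(T_t)\le\omega(A)$), and the component of $T-t$ containing the root also contains $u_{15}$ and hence — via $\epsilon$ — all of $W$, $\epsilon(C)$ and $\epsilon(D)$. By Lemma~\ref{trees:localStructure}, $G-\epsilon(t)$ has three components; let $G_1$ be the union of $\epsilon(t)$ with the two ``downward'' ones, so that $G_1=\epsilon(T_t)$, the vertex $\epsilon(t)$ separates $G_1$ from $G-G_1$, and the gate of the third component separates $G-G_1$ from $G_1$. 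Then
\[
|\epsilon(L_{10})\cap V(G-G_1)| \;=\; |L_{10}|-\omega(T_t) \;=\; \omega(A)+\wmin-\omega(T_t)\;\ge\;\wmin\;\ge\;\omega(T_s)-2,
\]
so Lemma~\ref{trees:subtree2}, applied with the tree-vertex $s\in L_{13}$, yields $\gamma(T_s)\subseteq G_1=\epsilon(T_t)$, which is exactly the claim.
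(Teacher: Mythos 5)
Your reduction of the first assertion (degree at least~$3$, \ref{treeprops:degree3}, the level argument via Lemma~\ref{trees:lambdaInvariance} giving $v\in L_{15}$) and your treatment of the second assertion (applying Lemma~\ref{trees:subtree2} with $G_1=\epsilon(T_t)$, the gate vertex from Lemma~\ref{trees:localStructure} and the capacity bound $|L_{10}|-\omega(T_t)\ge\wmin$) coincide with the paper's proof and are fine. The problem is the heart of the matter: ruling out $\gamma(s)=\epsilon(v)$ for a non-$\leT$-minimal $v\in L_{15}$. There you apply Lemma~\ref{trees:subtree2} only once, with $G_1$ the union of $\epsilon(v)$ and the two downward components of $G-\epsilon(v)$. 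That conclusion — $\gamma(T_s)\subseteq G_1$ — is not contradictory by itself: the two internally disjoint $B_{15}$-limbs of $\gamma(T_s)$ can simply occupy one downward component each, so no local obstruction arises. The contradiction you promise ("Lemma~\ref{trees:epsTrees} pulls them back \ldots a final weight count delivers the contradiction") is not carried out, and as stated it cannot be: pulling back merely tells you that both child-subtrees of $v$ carry $B_{15}$-trees, which you already know from $v\in L_{15}$, and Lemma~\ref{trees:epsTrees} is not even applicable in your sub-case $v>_T u_{15}$ (where $\epsilon(T_v)$ is undefined). A working weight count along your lines would additionally have to (a) locate the remainder $\gamma(T)-\gamma(T_s)$ in a single component of $G-\epsilon(v)$ (using that $\epsilon(v)=\gamma(s)$ is a cut vertex whose three edges in $\gamma(T)$ are already spoken for), and (b) combine $\omega(T_v)\le\omega(T_s)+2\le\wmin+4$ with lower bounds on \emph{both} children of $v$ (one $\ge\wmin$ by non-minimality and \ref{wmin}, the other $\ge 2^5-2$ by Lemma~\ref{trees:weightOfBinaryTree}); none of this appears in your sketch, and you yourself flag it as the unresolved obstacle. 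Moreover, even the hypothesis check of Lemma~\ref{trees:subtree2} for your choice of $G_1$ in the case $v>_T u_{15}$ is not done: "directly inside $\epsilon(A)$" must be replaced by the \ref{upishuge}-argument that there is an $\Smin$-vertex incomparable with $v$ whose whole subtree lies in the upward component, giving that component capacity at least $\wmin$.

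For comparison, the paper avoids all of this global bookkeeping by applying Lemma~\ref{trees:subtree2} \emph{twice}: once to exclude the upward component and once to exclude the downward component that contains an $\Smin$-vertex (both have capacity at least $\wmin\ge\omega(T_s)-2$, using exactly the incomparability argument above plus non-minimality of $v$). This confines $\gamma(T_s)$ to the single remaining downward component plus $\epsilon(v)$, and the contradiction is then immediate and purely structural: the two $\epsilon(v)$-linked $B_{15}$-trees of $\gamma(T_s)$, which meet only in $\epsilon(v)$, would both have to pass through that component's unique gate vertex supplied by Lemma~\ref{trees:localStructure}(iii). You should either adopt this two-exclusion argument or supply in full the capacity/utilisation accounting sketched above; as it stands, the decisive step of the first assertion is missing.
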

	\begin{proof}
		If $\gamma(s)\in V(W)$, then there is nothing left to prove. 
		Thus, let $\gamma(s)\notin V(W)$, which implies that there is some $s'\in V(T)$
		with $\gamma(s)=\epsilon(s')$ (as $\gamma(s)$ has degree~$3$ in $\gamma(T)$, by \ref{treeprops:degree3}).
		
		Next, observe that we must have $\lambda_G(\epsilon(s'))=
		\lambda_G(\gamma(s))\geq 15$ because 
		$s\in L_{15}$
		and $\gamma(s)$ is the corresponding branch vertex of $s$ in $\gamma(T)$.
		By Lemma~\ref{trees:lambdaInvariance}, this implies that 
		\begin{equation}\label{guildenstern}
		s'\in L_{15}.
		\end{equation}
		
		Next, we prove that
		\begin{equation}\label{trees:notInside}
		s'\in \Smin.
		\end{equation} 
		Suppose not. Let $T_1,T_2,T_3$ be the three components of $T-s'$,
where we assume that $T_2,T_3\subseteq T_{s'}$. 
		
By~\ref{upishuge}, there is a vertex in $L_{15}$ contained
		in $T-T_{s'}=T_1$ that is incomparable with $s'$, and then also a vertex $s_1\in \Smin$
		such that $T_{s_1}\subseteq T_1$. 
		As $s'$ is not $\leq_T$-minimal within $L_{15}$, one of $T_{2}$ and $T_{3}$
		contains a vertex from $\Smin$, say that $s_2\in \Smin\cap V(T_{2})$.
		Then 
		\begin{equation}\label{polonius}
		\omega(T_1),\omega(T_2)\geq\wmin,
		\end{equation}
		by~\ref{wmin}.
		
		Because of~\ref{ukleaves},~\ref{ukmain} and~\ref{linorder} the vertex $u_{15}$
		is the only vertex in $P_M$ that lies in $L_{15}$. 
		Since $s'\neq u_{\BORDER}$ as $s'\notin \Smin$
		by~\ref{ukleaves}, we deduce from~\eqref{guildenstern} that $s'\notin V(P_M)$.
By Lemma~\ref{trees:localStructure}, the graph $G-\epsilon(s')$ has precisely three
components $K_1,K_2,K_3$, it holds that $K_i$ contains $\epsilon(V(T_i))$,
and there is $x_i\in V(K_i)$ that separates all vertices of degree~$3$
or more in $K_i$ from $G-K_i$ for $i\in [3]$.
Note that by~\eqref{polonius} 
\[
|\epsilon(L_{10})\cap V(K_i)|\geq \wmin \geq \omega(T_s)-2 \emtext{ for each }i\in[2].
\]		

Applying Lemma~\ref{trees:subtree2} with $G_1=G-K_1$, $x=\epsilon(s')$, $y=x_1$,
we first see that $\gamma(T_s)\subseteq G-K_1$,
and then, with a second application, that also $\gamma(T_s)\subseteq G-K_2$. It follows
that $\gamma(T_s)\subseteq G-K_1-K_2=G[K_3+\epsilon(s')]$. 

On the other hand, the tree $\gamma(T_s)$ contains two $\epsilon(s')$-linked $B_{\BORDER}$-trees that only meet in $\epsilon(s')=\gamma(s)$.
But as both of them are contained in $G[K_3+\epsilon(s')]$, 
they have to contain $x_3\neq \epsilon(s')$,
		which is impossible.
		This proves~\eqref{trees:notInside},
		which then implies that $\gamma(s)\in V(W) \cup\epsilon(\Smin)$.
		
	\medskip
		Finally, we claim that
		\begin{equation}
\emtext{
		if $t\in \Smin \sm \{u_{\BORDER}\}$ and $\gamma(s)=\epsilon(t)$, then $\gamma(T_s)\subseteq \epsilon(T_t)$.
}
		\end{equation} 
Observe that $\epsilon(T_t)$ is defined as $T_t$ is disjoint from $P_M$;
the latter follows from~\ref{ukleaves} and~\ref{linorder}.
Put $G_1=\epsilon(T_t)$, and note that $G-G_1$ contains $\epsilon(u_{15})$.
It follows from~\ref{ukleaves} that $G-G_1$ also contains a
subdivision of $T_{u_{\BORDER}}$. By~\ref{wmin}, this implies
$|\epsilon(L_{10})\cap V(G-G_1)|\ge \wmin$.
Moreover,  Lemma~\ref{trees:localStructure} yields a vertex~$y$ such that 
Lemma~\ref{trees:subtree2} becomes applicable to $G_1$ with $x=\epsilon(t)$.
We obtain $\gamma(T_s)\subseteq \epsilon(T_t)$.
	\end{proof}

	For a vertex $u\in U$, we define the \emph{signature} $\sigma(T_u)\in \N^2$ of the tree $T_u$ as follows:
	Let $v,w$ be the neighbours of $u$ with $v,w\leT u$
	and suppose $\omega(T_v)\ge \omega(T_w)$
	Then, $\sigma(T_u)=(\omega(T_v),\omega(T_w))$.
	We write $\sigma(T_1)\ge \sigma(T_2)$ if the inequality holds componentwise.
	
	The next lemma shows that if 
	for some $s,t\in \Smin$ the tree $T_s$ is mapped into $\epsilon(T_t)$, 
	the intended space for $T_t$, then the signature of $T_t$
	is at least as large as the signature of $T_s$.
	
	\begin{lemma}\label{trees:mapSignature}
		Let $s_1,s_2\in \Smin$, set $Q_1=T_{s_1}$ and $Q_2=T_{s_2}$, and assume  that
		$Q_2\subseteq A$.
		If $\gamma(Q_1)\subseteq \epsilon(Q_2)$,
		then $\sigma(Q_1)\leq \sigma(Q_2)$.
	\end{lemma}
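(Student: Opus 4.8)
The plan is to argue by contradiction. Suppose $\sigma(Q_1)\not\leq\sigma(Q_2)$, write $\sigma(Q_i)=(p_i,q_i)$ with $p_i\geq q_i$, so that either $p_1>p_2$ or $q_1>q_2$. Let $s_1$ have its two $\leq_T$-smaller neighbours $v_1,w_1$ with $\omega(T_{v_1})=p_1\geq\omega(T_{w_1})=q_1$, and similarly $v_2,w_2$ for $s_2$. Since $\gamma(Q_1)\subseteq\epsilon(Q_2)$ and $\gamma(s_1)$ has degree~$3$ in $\gamma(T)$, property~\ref{treeprops:degree3} forces $\gamma(s_1)\in\epsilon(U)\cup V(W)$; moreover $\gamma(s_1)$ is a branch vertex of $\gamma(T)$ corresponding to $s_1\in L_{15}$, so $\lambda_G(\gamma(s_1))\geq 15$, and combining Lemma~\ref{HDlem}~(i) (which tells us $W$ carries no $B_{10}$-tree, hence no vertex of $W$ inside a subdivision that is trapped in $\epsilon(Q_2)$ can have level~$15$ without leaving $\epsilon(Q_2)$) with Lemma~\ref{trees:lambdaInvariance} we identify $\gamma(s_1)=\epsilon(s')$ for some $s'\in L_{15}$ with $s'\leq_T s_2$, i.e.\ $s'\in V(Q_2)$. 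The key structural input is now that, since $Q_2\subseteq A$ and $A$ is disjoint from $P_M$ (by~\ref{ACD}), Lemma~\ref{trees:localStructure} applies at every vertex of $Q_2\cap U$, so $\epsilon$ faithfully reproduces the branching structure of $Q_2$ inside $\epsilon(Q_2)$.

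The heart of the argument is a capacity/utilisation count, exactly in the spirit of Lemma~\ref{trees:subtree2}. The two $\epsilon(s')$-linked $B_{15}$-trees inside $\gamma(Q_1)$ split off two $\gamma(v_1)$- and $\gamma(w_1)$-rooted $B_{14}$-subtrees whose $\gamma$-preimages are $v_1$- resp.\ $w_1$-linked $B_{14}$-trees in $T$; hence by Lemma~\ref{trees:weightOfBinaryTree} and Lemma~\ref{trees:atMostTwo} the $\gamma$-images of $T_{v_1}$ and $T_{w_1}$ use up at least $\omega(T_{v_1})-2=p_1-2$ and $\omega(T_{w_1})-2=q_1-2$ many $\gamma(L_{10})$-vertices inside $\epsilon(Q_2)$, landing (after passing through $\epsilon(s')$) in the two components of $G-\epsilon(s')$ corresponding, via Lemma~\ref{trees:localStructure}, to the two subtrees of $Q_2$ below~$s'$. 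The total capacity available in those two components is $|\epsilon(L_{10})\cap V(\epsilon(T_{v_2}))|=\omega(T_{v_2})=p_2$ and $\omega(T_{w_2})=q_2$ respectively — here I would invoke property~\ref{treeprops:weight}-style bookkeeping together with the fact that $\epsilon$ is injective on $L_{10}$ — and a utilisation cannot exceed the capacity of the region it lies in by more than the global slack~$2$ from Lemma~\ref{trees:atMostTwo}. Matching the larger demand $p_1$ against the larger supply $p_2$ and the smaller $q_1$ against $q_2$ (using that if $p_1>p_2$ then $p_1$ cannot fit into the $q_2$-side either, since $q_2\leq p_2<p_1$), one obtains $p_1\leq p_2+O(1)$ and $q_1\leq q_2+O(1)$; choosing the constant~$10$ in the definition of $L_{10}$ and the height thresholds in Section~\ref{trees:structure} generously (which the paper explicitly allows, making no attempt to optimise constants) makes the slack irrelevant and yields $\sigma(Q_1)\leq\sigma(Q_2)$ outright.

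The main obstacle, I expect, is the careful case analysis of \emph{which} $\gamma(T_{v_1})$ goes into \emph{which} component of $G-\epsilon(s')$: a priori the $B_{14}$-tree hanging off $v_1$ could be routed into the "wrong" subtree of $Q_2$, or could partly escape $\epsilon(Q_2)$ through $\epsilon(s')$ back towards the wall. The first possibility is handled by a pigeonhole on capacities (the larger demand is forced into the larger-capacity slot, because the other slot is too small even allowing for the $\pm 2$ slack); the second is ruled out by the separation statement in Lemma~\ref{trees:localStructure}~(iii) — each component $K$ of $G-\epsilon(s')$ has a single vertex $x\in V(K)$ separating all its degree-$\geq 3$ vertices from $G-K$, so a $B_{14}$-tree that left $K$ would have to pass through that cut vertex and could then contribute at most a bounded number of branch vertices outside, again absorbed by the slack. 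Once the routing is pinned down, the inequality $\sigma(Q_1)\leq\sigma(Q_2)$ is just the two capacity bounds read off componentwise.
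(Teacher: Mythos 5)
There is a genuine gap, and it sits at the heart of your counting step. Your argument only yields $p_1\le p_2+O(1)$ and $q_1\le q_2+O(1)$, and you then try to dismiss the additive slack by ``choosing the constants generously''. That is not available here: the weights $\omega(T_{v_i}),\omega(T_{w_i})$ are fixed integers determined by the given tree $T$ (they can differ by as little as $1$), the construction's constants are already fixed, and the lemma asserts the exact componentwise inequality $\sigma(Q_1)\le\sigma(Q_2)$ — which is also what is needed later, since Lemma~\ref{trees:treeOutsideA} applies this comparison repeatedly along a sequence $s_1,s_2,\dots$, so any per-step additive loss would accumulate and destroy the argument. The idea you are missing is that the hypothesis $Q_2\subseteq A$ removes the slack entirely: by the first part of Lemma~\ref{trees:atMostTwo}, $\gamma(L_{10})\setminus\epsilon(L_{10})\subseteq V(W)$, and $\epsilon(A)$ is disjoint from $W$, so every $\gamma(L_{10})$-vertex lying in $\epsilon(Q_2)$ is an $\epsilon(L_{10})$-vertex. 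Hence for subtrees $T_1,T_2$ of $T$ with $T_2\subseteq A$ and $\gamma(T_1)\subseteq\epsilon(T_2)$ one gets $\omega(T_1)\le\omega(T_2)$ \emph{exactly}, and this clean inequality is what the comparison of signatures must be built on. Your use of Lemma~\ref{trees:atMostTwo} only as a global $\pm2$ bound cannot be repaired into the stated conclusion.

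A second, independent problem is the routing case in which \emph{both} components $\gamma(Q_{1,1})$ and $\gamma(Q_{1,2})$ end up meeting the larger slot $\epsilon(Q_{2,1})$, indeed with all of $\gamma(Q_1)$ inside $\epsilon(Q_{2,1})$. No capacity pigeonhole excludes this: if $\omega(Q_{2,1})$ is large enough, the count is consistent, yet the conclusion $\beta_1\le\beta_2$ would still fail. The paper rules this case out structurally, not numerically: if $\gamma(Q_1)\subseteq\epsilon(Q_{2,1})$, then (via Lemma~\ref{trees:epsTrees}) $Q_{2,1}$ contains a $B_{16}$-tree, so by~\ref{upishuge} the root of $Q_{2,1}$ lies in $L_{15}$, contradicting the $\le_T$-minimality of $s_2$ within $L_{15}$, i.e.\ $s_2\in\Smin$. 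Your sketch has no counterpart to this step (and, conversely, your worry that a subtree could ``escape through $\epsilon(s')$ back towards the wall'' is vacuous, since $\gamma(Q_1)\subseteq\epsilon(Q_2)$ is a hypothesis). With the exact inequality above and Lemma~\ref{trees:localStructure} to control how the two disjoint subtrees can be distributed over the two components of $\epsilon(Q_2)-\epsilon(s_2)$ (or of $\epsilon(Q_2)$ minus the image of the relevant branch vertex), the remaining cases do close along the lines you indicate, but as written the proposal does not prove the lemma.
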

	\begin{proof}
By Lemma~\ref{trees:atMostTwo}, 
we have $\epsilon(A-L_{10})\cap \gamma(L_{10})=\emptyset$.
		Therefore, if $T_1,T_2$ are two subtrees of $T$, then we clearly have
		\begin{equation}\label{trees:weightEmbedding}
		\begin{minipage}[c]{0.8\textwidth}\em
		$T_2\subseteq A$, $\gamma(T_1)\subseteq \epsilon(T_2) \Longrightarrow \omega(T_1)\le \omega(T_2)$.
		\end{minipage}\ignorespacesafterend 
		\end{equation}

		For each $i\in [2]$, let $\sigma(Q_i)=(\alpha_i,\beta_i)$,
		and let  $Q_{i,1}$ and $Q_{i,2}$ be the two components in $Q_i-s_i$ such 
		that $\omega(Q_{i,1})=\alpha_i$ and $\omega(Q_{i,2})=\beta_i$. 
		Note that, because of $s_i\in \Smin$ each of $Q_{i,1}$ and $Q_{i,2}$ contains
		a $B_{15}$-tree that is linked to the respective root.

		Assume for a contradiction that $(\alpha_2,\beta_2)\ngeq (\alpha_1,\beta_1)$.
		Suppose first that $\beta_2<\beta_1$.
		Since $\alpha_1\ge \beta_1>\beta_2$,
		neither $\gamma(Q_{1,1})$ nor $\gamma(Q_{1,2})$ can be contained in $\epsilon(Q_{2,2})$, by \eqref{trees:weightEmbedding}.
		This implies that both trees $\gamma(Q_{1,1})$ and $\gamma(Q_{1,2})$
		contain edges of $\epsilon(Q_{2,1})$.
		As $\gamma(Q_{1,1})$ and $\gamma(Q_{1,2})$ are disjoint,
		this is, by 
Lemma~\ref{trees:localStructure}, only possible if the whole tree $\gamma(Q_1)$ is contained in $\epsilon(Q_{2,1})$.
		However, this implies that $Q_{2,1}$ contains a $B_{\BORDERPLUS{}}$-tree,
		which together with~\ref{upishuge}, shows that the root of $Q_{2,1}$ 
		lies in $L_{\BORDER}$. This, however, contradicts $s_2\in \Smin$.
		
		Now suppose that $\beta_2\ge \beta_1$ but $\alpha_2<\alpha_1$.
		So we have $\alpha_1 > \alpha_2\ge \beta_2 \ge  \beta_1$.
		Hence, by \eqref{trees:weightEmbedding}, 
		the tree $\gamma(Q_{1,1})$ is neither contained in $\epsilon(Q_{2,1})$ nor in $\epsilon(Q_{2,2})$
		and therefore contains edges of both trees $\epsilon(Q_{2,1})$ and $\epsilon(Q_{2,2})$.
		But then, 
by Lemma~\ref{trees:localStructure},
there is no place for $\gamma(Q_{1,2})$ as it is disjoint from $\gamma(Q_{1,1})$
		but also contained in $\epsilon(Q_2)$.
		This is the final contradiction.
	\end{proof}
	
	Recall from~\ref{treeprops:u16}
	that $u_{\BORDER}\in \Smin$, $\omega(T_{u_{\BORDER}})=\wmin$ and
	$\epsilon(V(T_{u_{\BORDER}}))\subseteq V(G')$
	with $G'=G-(\epsilon(A)\cup Z_a - \{a\})$.
	So, $\epsilon$ maps $T_{u_{\BORDER}}$ to $G'$,
	but $\gamma$ may map it to some other part of $G$.
	Which part of $T$ is then mapped to $G'$ by $\gamma$?
	The next lemma shows that this is
	a $\leT$-minimal tree that contains a $B_{\BORDERPLUS}$-tree.

	\begin{lemma} \label{trees:treeOutsideA}
		There is a vertex $s\in \Smin$ such that  
		$\gamma(T_s)\subseteq G'$ and $\sigma(T_s)\ge \sigma(T_{u_{\BORDER{}}})$.
	\end{lemma}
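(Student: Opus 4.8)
The goal is to locate, inside $\gamma(T)$, a subtree $T_s$ with $s\in\Smin$ whose image under $\gamma$ lies entirely in $G'$ and whose signature dominates $\sigma(T_{u_{\BORDER}})$. The natural strategy is to first identify \emph{some} $\leT$-minimal $L_{15}$-vertex whose subtree is forced into $G'$, and then, if its signature is too small, to trade it upward. First I would recall that, by~\ref{upishuge} and~\ref{ukleaves}, the tree $T_{u_{\BORDER}}$ contains a $u_{\BORDER}$-linked $B_{16}$-tree, so $u_{\BORDER}\in L_{15}$ and, by minimality of $\omega(T_{u_{\BORDER}})$ among $\leT$-minimal $L_{15}$-vertices (the definition of $\wmin$), we have $\omega(T_{u_{\BORDER}})=\wmin$ and $\sigma(T_{u_{\BORDER}})$ is well defined. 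By~\ref{treeprops:u16}, $\epsilon$ maps $T_{u_{\BORDER}}$ into $G'$, so the ``capacity'' of $G'$ for $L_{10}$-vertices is at least $\wmin$ (property~\ref{treeprops:weight}).

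The core of the argument is a capacity/utilization count. By Lemma~\ref{ADlem}, $\gamma(T)$ meets $\epsilon(A)$ and contains $a$; since $A$ contains a $B_{13}$-tree, it has weight at least $6$, so $\omega_\gamma(\epsilon(A))\ge 4$ by Lemma~\ref{trees:atMostTwo}. In particular $\gamma(T)$ genuinely enters $\epsilon(A)$, and hence there is some subtree of $T$ mapped largely into $\epsilon(A)$. Complementarily, I would argue that $\gamma$ must also push a whole $T_s$ with $s\in\Smin$ into $G'=G-(\epsilon(A)\cup Z_a-a)$: consider the $\leT$-minimal $L_{15}$-vertex $s$ for which $\gamma(s)$ lies ``on the $G'$ side'' of $a$ — such an $s$ exists because not all of the (at least two, since there are several members of $\Smin$ by~\ref{upishuge}) subtrees $T_{s'}$, $s'\in\Smin$, can have $\gamma(s')\in\epsilon(A)$, as $\epsilon(A)$ only has capacity for one of them together with what else $\gamma(T)$ must place there. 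For that $s$, I would apply Lemma~\ref{trees:rootMapping} to conclude $\gamma(s)\in\epsilon(\Smin)\cup V(W)$, and then Lemma~\ref{trees:subtree2} (with $G_1=G'$, $x=a$, and $y$ the separating vertex on the $\epsilon(A)$-side supplied by the construction) to conclude $\gamma(T_s)\subseteq G'$ — the hypothesis $|\epsilon(L_{10})\cap V(\epsilon(A)\cup Z_a-a)|\ge\omega(T_s)-2$ being exactly what Lemma~\ref{ADlem} together with $\omega(T_s)\le\wmin+2$ (forced by $\leT$-minimality and the choice of $u_{\BORDER}$) gives.

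Finally, the signature inequality. Once $\gamma(T_s)\subseteq G'$ for some $s\in\Smin$, I would look at where $\gamma$ maps $T_{u_{\BORDER}}$: since $\epsilon(T_{u_{\BORDER}})\subseteq G'$ but $\gamma$ may relocate it, run the same capacity argument in reverse to see that $\gamma(T_{u_{\BORDER}})$ must land in $\epsilon(A)$-side, i.e.\ that some $T_{s_1}$ with $s_1\in\Smin$ has $\gamma(T_{s_1})\subseteq\epsilon(T_{u_{\BORDER}})$ or, more to the point, that $\gamma(T_s)\subseteq\epsilon(Q_2)$ for a suitable $Q_2=T_{s_2}\subseteq A$, $s_2\in\Smin$; then Lemma~\ref{trees:mapSignature} yields $\sigma(T_s)\le\sigma(T_{s_2})$. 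Combined with the minimality of $\omega(T_{u_{\BORDER}})=\wmin$ among $\Smin$-subtrees and the fact that $s$ was chosen so that its subtree is forced out of $\epsilon(A)$, a short case analysis on the two coordinates of the signatures — essentially the same bookkeeping as in the proof of Lemma~\ref{trees:mapSignature} — gives $\sigma(T_s)\ge\sigma(T_{u_{\BORDER}})$.

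\textbf{Main obstacle.} The delicate point is the bidirectional counting: showing simultaneously that \emph{some} $\Smin$-subtree is forced entirely into $G'$ \emph{and} pinning down which part of $T$ occupies $\epsilon(A)$, so that Lemma~\ref{trees:mapSignature} can be invoked with the right $Q_2\subseteq A$. Getting the capacity inequality $\omega(T_s)\le\wmin+2$ to hold for the \emph{particular} $s$ produced — rather than just for $u_{\BORDER}$ — is where the choice of $s$ as a $\leT$-minimal $L_{15}$-vertex and the extremal choice of $u_{\BORDER}$ (minimizing $\omega(T_{u_{\BORDER}})$) have to be used in tandem, and is likely the step requiring the most care.
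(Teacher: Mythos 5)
There is a genuine gap, and it sits exactly where you flag your ``main obstacle''. Your existence step --- pick the $\leT$-minimal $L_{15}$-vertex $s$ whose $\gamma$-image lies on the $G'$-side, which exists ``because $\epsilon(A)$ only has capacity for one of the $\Smin$-subtrees'' --- is not a valid count: by construction $\epsilon(A)$ contains $\epsilon(T_{s'})$ for \emph{every} $s'\in\Smin\sm\{u_{\BORDER}\}$ (plus further weight of $A$ outside these subtrees), so in the intended embedding all of these roots do land in $\epsilon(A)$ and no contradiction arises from many of them being there. Worse, for an arbitrary $s\in\Smin$ you cannot invoke Lemma~\ref{trees:rootMapping} or Lemma~\ref{trees:subtree2} at all, since both need $\omega(T_s)\le\wmin+2$; and this bound is \emph{not} ``forced by $\leT$-minimality and the choice of $u_{\BORDER}$'' --- minimality of $\omega(T_{u_{\BORDER}})$ gives $\omega(T_s)\ge\wmin$ for other $s\in\Smin$, i.e.\ a lower bound, not an upper one. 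The paper closes precisely this gap with an iterative relocation argument: start with $s_1=u_{\BORDER}$, and as long as $\gamma(s_i)\notin V(G')$, Lemma~\ref{trees:rootMapping} forces $\gamma(s_i)=\epsilon(s_{i+1})$ with $s_{i+1}\in\Smin\sm\{u_{\BORDER}\}$ and $\gamma(T_{s_i})\subseteq\epsilon(T_{s_{i+1}})$; the invariants $s_i\in\Smin$, $\omega(T_{s_i})\le\wmin+2$ (maintained via Lemma~\ref{trees:atMostTwo}) and $\sigma(T_{s_i})\ge\sigma(T_{u_{\BORDER}})$ are carried along, and the process terminates because $\gamma$ is injective, so the sequence cannot revisit a vertex of the finite set $\Smin$. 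Nothing in your sketch replaces this mechanism.

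The signature inequality has the same problem: you propose a single ``reverse'' application of Lemma~\ref{trees:mapSignature} with a suitable $Q_2=T_{s_2}\subseteq A$ such that $\gamma(T_s)\subseteq\epsilon(Q_2)$, but for the vertex $s$ you actually need, $\gamma(T_s)\subseteq G'$, which is disjoint from $\epsilon(A)$ apart from $a$, so no such $Q_2$ exists; moreover Lemma~\ref{trees:mapSignature} yields $\sigma(Q_1)\le\sigma(Q_2)$ for the \emph{relocated} tree, and what links $\sigma(T_s)$ to $\sigma(T_{u_{\BORDER}})$ in the paper is the whole chain $\sigma(T_{u_{\BORDER}})\le\sigma(T_{s_2})\le\cdots\le\sigma(T_{s_h})=\sigma(T_s)$, one application per relocation step --- not ``bookkeeping as in the proof of Lemma~\ref{trees:mapSignature}''. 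Finally, in the concluding application of Lemma~\ref{trees:subtree2} (with $G_1=G'$, $x=a$, $y=\epsilon(v_{\BORDER})$), the hypothesis $|\epsilon(L_{10})\cap V(G-G')|\ge\omega(T_s)-2$ is not supplied by Lemma~\ref{ADlem}, which only gives weight at least~$6$; the paper uses that $\epsilon(A)$ contains $\epsilon(T_{s'})$ for some $s'\in\Smin\sm\{u_{\BORDER}\}$ of weight at least $\wmin$ (by~\ref{wmin}), combined again with the chain-invariant $\omega(T_s)\le\wmin+2$. So the correct supporting lemmas are all identified, but the core argument that makes them applicable to the particular $s$ is missing.
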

	\begin{proof}
		Let $s_1=u_{\BORDER}$.
		Starting from $s_1$, we construct a sequence $s_1,\ldots, s_h$ 
		of vertices such that $\gamma(s_i)=\epsilon(s_{i+1})$ for every $i\in [h-1]$
		such that the following properties hold
		\begin{enumerate}[\rm (i)]
			\item $s_i\in \Smin$ for every $i\in [h]$;
			\item $\omega(T_{s_i})\le \wmin+2$ for every $i\in [h]$; 
			\item $\sigma(T_{s_{i}})\ge \sigma(T_{u_{\BORDER}})$ for every $i\in [h]$;
			\item $\gamma(s_i)\in \epsilon(A)$ for every $i\in [h-1]$ and $\gamma(s_h)\in V(G')$.
		\end{enumerate}
		Then, $s_h$ will serve as the vertex $s$ in the statement of the lemma.
		To find this sequence, we first check that $s_1$ satisfies all properties (i)--(iv)
		and prove that the properties are maintained from each $s_i$ to its successor.
		To avoid double subscripts, we write $T^{(i)}$ instead of $T_{s_{i}}$ for every~$i$.
		
		By \ref{wmin}, $s_1=u_{\BORDER}\in \Smin$ and $\omega(T^{(1)})=\wmin$ and trivially, $\sigma(T^{(1)})\ge \sigma(T_{u_{\BORDER}})$.
		Since $\omega(T^{(1)})\le \wmin$, Lemma~\ref{trees:rootMapping} shows that $\gamma(s_1)\in W\cup \epsilon(\Smin)$.
		If $\gamma(s_1)\in V(G')$, we set $h=1$ and stop this process.
		Hence, we may assume that $\gamma(s_1)\in \epsilon(\Smin\sm \{u_{\BORDER}\})\subseteq \epsilon(A)$
		and therefore, $s_1$ satisfies all properties (i)--(iv).
		
		Now let $i\in \N$ be a number such that $s_1,\ldots, s_i$ are already constructed,
		each $s_j$ for $j\le i$ satisfies (i)--(iv) and $\gamma(s_i)\notin V(G')$.
		We prove now that $s_{i+1}$ is well-defined via $\gamma(s_i)=\epsilon(s_{i+1})$ and that it satisfies (i)--(iv), as well.
		As $s_i\in \Smin$ by (i) and $\omega(T^{(i)})\le \wmin+2$ by (ii), 
		Lemma~\ref{trees:rootMapping} implies that $\gamma(s_i)\in V(W)\cup \epsilon(\Smin)$.
		Since $\gamma(s_i)\notin V(G')$, we have $s_i\in \epsilon(\Smin\sm \{u_{\BORDER}\})$.
		So, the vertex $s_{i+1}\in \Smin\sm \{u_{\BORDER}\}$ is well-defined via $\gamma(s_i)=\epsilon(s_{i+1})$ and therefore satisfies~(i).
		
		Since $\omega(T^{(i)})\le \wmin+2$, by (ii),
		Lemma~\ref{trees:rootMapping} implies that $\gamma(T^{(i)})\subseteq \epsilon(T^{(i+1)})$.
		This in turn implies by Lemma~\ref{trees:mapSignature} that $\sigma(T^{(i+1)})\ge \sigma(T^{(i)})$
		and as $\sigma(T^{(i)})\ge \sigma(T_{u_{\BORDER}})$, by (iii),
		we also have $\sigma(T^{(i+1)})\ge \sigma(T_{u_{\BORDER}})$.
		Thus, $s_{i+1}$ satisfies (iii).

		To prove (ii), assume for a contradiction that $\omega(T^{(i+1)})>\wmin+2$.
		Then there are three distinct vertices $t_1,t_2,t_3\in V(T)$ and
		numbers $1\le j_1\leq j_2\leq j_3 \le {h-1}$
		such that for every $i\in  [3]$, 
		we have $t_i\in \lev_{10}\cap V(T^{(j_i+1)})$ 
		but $\epsilon(t_i)\notin \gamma(L_{10})\cap V(\gamma(T^{(j_i)}))$. 
		This implies that $\epsilon(t_i)\notin \gamma(L_{10})$
		for each $i\in [3]$, which is a contradiction to Lemma~\ref{trees:atMostTwo}.
		Hence, $\omega(T^{(i+1)})\le \wmin+2$, and therefore $s_{i+1}$ satisfies (ii).
		
		Now, if $\gamma(s_{i+1})\in V(G')$, set $h=i+1$ and stop,
		otherwise we have $\gamma(s_{i+1})\in \epsilon(\Smin\sm\{u_{\BORDER}\}) \subseteq \epsilon(A)$ 
		and therefore, $s_{i+1}$ also satisfies (iv).
		
		\medskip
		
		It is not difficult to see that this process terminates.
		Indeed, assume for a contradiction that it does not.
		Since there are only finitely many vertices in $\Smin$,
		there must be indices $1\le i < j \le |\Smin|+1$ such $s_i=s_j$.
		Among all such pairs of indices choose $(i,j)$ such that $|j-i|$ is minimal and subject to that $i$ is minimal.
		Then, by this minimality, we have $s_{i-1}\neq s_{j-1}$ 
		but on the other hand $\gamma(s_{i-1})=\epsilon(s_i)=\epsilon(s_j)=\gamma(s_{j-1})$.
		This is a contradiction as $\gamma$ maps $V(T)$ injectively
		into $V(G)$.

		Therefore, the process terminates with a vertex $s_h\in \Smin$ such that 
		$\gamma(s_h)\in V(G')$ and $\sigma(T^{(h)})\ge \sigma(T_{u_{\BORDER}})$.
		Set $s=s_h$.
		Let $G_1=G'$, $x=a$ and $y=\epsilon(v_{\BORDER})$.
		We have $|\epsilon(L_{10})\cap V(G-G_1)| \ge \wmin$ as $\epsilon(A)$ contains a tree $T_{s'}$ for some $s'\in \Smin\sm\{u_{\BORDER}\}$,
		and this tree has weight $\wmin$ by \ref{wmin}.
		Since $\gamma(s)\in V(G_1)$, Lemma~\ref{trees:subtree2} implies that $\gamma(T_s)\subseteq G_1=G'$.
		Hence, $s$ satisfies the statement of the lemma.
	\end{proof}
	
	\subsection{Finding a linkage}
	\label{trees:findLinkage}
	We finally come to the proof of Lemma \ref{trees:mainLem}.
	By Lemma~\ref{trees:treeOutsideA}, there is a vertex $s^*\in \Smin$ 
	such that 
	 $\gamma(T_{s^*})\subseteq G'$
	and $\sigma(T_{s^*}) \ge \sigma(T_{u_{\BORDER{}}})$.

	Consider the unique $a$--$\gamma(s^*)$-path $P$ in $\gamma(T)$,
which exists as $a\in\gamma(T)$, by Lemma~\ref{ADlem}.
	We claim that 
	\begin{equation}\label{hamlet}
	\begin{minipage}[c]{0.8\textwidth}\em
if $v\in V(T)$ is such that $T_v$ contains a $v$-linked $B_{13}$-tree, 
if $\epsilon(T_v)\subseteq G'$ 
and if 
$x \in V(P)$ and $x$ separates
 $\gamma(T)\cap \epsilon(T_v)$ from $P$ in $\gamma(T)$,
then $x=\gamma(s^*)$. 
	\end{minipage}\ignorespacesafterend 
	\end{equation} 
Suppose that $x\neq \gamma(s^*)$.
As $T_v$ contains a $v$-linked $B_{13}$-tree it follows from 
Lemma~\ref{trees:weightOfBinaryTree}
that $\omega(T_v)\geq 2^3-2\geq 6$. 
From Lemma~\ref{trees:atMostTwo} 
we obtain $\omega_\gamma(\epsilon(T_v))\geq 6-2=4$.

As $x$ separates $P$ from  $\gamma(T)\cap \epsilon(T_v)$
it follows from $x\neq \gamma(s^*)$ that $\gamma(T_{s^*})$ and $\gamma(T)\cap\epsilon(T_v)$ are disjoint. Moreover, $\gamma(T_{s^*})$ and $\epsilon(T_v)$ are both contained in $G'$.
Thus 
\begin{align*}
\omega_\gamma(G') & = |\gamma(L_{10})\cap V(G')| \\
 & \geq |\gamma(L_{10})\cap V(\gamma(T_{s^*}))| + 
|\gamma(L_{10})\cap V(\gamma(T)\cap \epsilon(T_v))| \\
&= \omega(T_{s^*}) + \omega_\gamma(\epsilon(T_v)) \geq  \wmin+4,
\end{align*}
where we have used~\ref{wmin}.
However,~\ref{treeprops:weight} gives  
	$|\epsilon(L_{10})\cap V(G')|=\wmin$,
which means that Lemma~\ref{trees:atMostTwo} yields $\omega_\gamma(G')\leq \wmin+2$.
This contradiction proves~\eqref{hamlet}.

\medskip
	
	From Lemma~\ref{trees:rootMapping} and~\ref{treeprops:u16} we deduce that 
	$\gamma(s^*)\in V(W) $ or $\gamma(s^*)=\epsilon(u_{\BORDER{}}))$.
	In each of the two cases,
we will prove now that $\gamma(T)$ contains an $(a$--$b,c$--$d)$-linkage in $W$.
We point out that both cases are possible (for some choices of~$T$).

	Let us first prove that
\begin{equation}\label{trees:rootAtU\BORDER{}}
\emtext{
	if $\gamma(s^*)=\epsilon(u_{\BORDER{}})$, then $W$ contains an $(a$--$b,c$--$d)$-linkage.
}
\end{equation} 
	In this case, $P$ connects $a$ and $\epsilon(u_{\BORDER})\in\epsilon(C)$
(note that $u_{15}\in V(C)$ by~\ref{ACD})
	and therefore contains an $W$--$\epsilon(C)$-path contained in $Z_b$ or in $Z_c$. 
Suppose it is the latter.
	Then, the path $P$ contains in particular $\epsilon(u_{\SMALLPLUS{}})$.
Now, $u_{13}$ lies in $P_M$ by~\ref{ukmain} and thus has precisely one 
neighbour $v$ that does not lie in $P_M$. By~\ref{linorder},
$T_v$ is disjoint from $P_M$, which means that $\epsilon(T_v)$ is defined,
and moreover that $\epsilon(T_v)$ lies in $G'$. 
Moreover, 
as $u_{13}\in L_{13}$ by~\ref{ukleaves}, we observe that $T_v$ contains a $v$-linked $B_{13}$-tree.
From~\ref{epssep} it follows that $\epsilon(u_{13})$ separates $\gamma(T)\cap\epsilon(T_v)$
from $P$ --- then, however, we obtain a 
contradiction to~\eqref{hamlet}.
	
	Thus,  $P$ contains an $W$--$\epsilon(C)$-path contained in $Z_b$.
	Therefore, $P$ contains an  $a$--$b$-path, which is then contained in $W$.
By Lemma~\ref{ADlem}, $\gamma(T)$ contains an $u_{15}$--$d$-path $Q$. 
If $Q$ and $aPb$ meet then there is a $t$ such that $\gamma(t)\in V(aPb)$ 
and such that $\gamma(t)$ separates $d$, and then also $\epsilon(D)\cap\gamma(T)$,
from $P$ in $\gamma(T)$. As $s^*\notin V(aPb)$, this contradicts~\eqref{hamlet};
where we have used $D=T_{u_{12}}$ by~\ref{ACD} and $u_{12}\in L_{12}$ by~\ref{ukleaves}.
Therefore $aPb$ and $Q$ do not meet, and as $Q$ starts in $\epsilon(u_{15})$
it follows that $Q$ must contain $c$. Then $cQd\subseteq W$, and we have found
the linkage.

	\medskip
	
Finally, we claim that:
	\begin{equation}\label{trees:rootInH}
\emtext{
	if $\gamma(s^*)\in V(W)$, then $W$ contains an $(a$--$b,c$--$d)$-linkage.
}	
\end{equation} 
Let $T_1,T_2$ be the two components of $T_{u_{15}}-u_{15}$ such that 
$T_2$ contains $P_M$ and then also $D$. 
Then, by~\ref{mpath}, it follows that 
$\omega(T_1)\geq\omega(T_2)$, which implies that 
$\sigma(T_{u_{15}})=(\omega(T_1),\omega(T_2))=:(\alpha_1,\alpha_2)$.

As $s^*\in \Smin\subseteq L_{15}$, the two components $R_1,R_2$ of $T_{s^*}-s^*$
both contain $B_{15}$-trees. Since $\epsilon(s^*)\in V(W)$, it follows from 
Lemma~\ref{HDlem} (ii) that each of $\gamma(R_1)$ and $\gamma(R_2)$ 
contains a vertex of $\epsilon(C)$. 
In particular, we may assume that $\gamma(R_1)$ contains $b$
and $\epsilon(u_{15})$, and that $\gamma(R_2)$ contains $c$.

By~\ref{epssep}, $\epsilon(u_{15})\in V(R_1)$ separates $\epsilon(T_1)$ from 
every vertex of degree at least~$3$ in $G-\epsilon(T_1)$. 
Thus, $\gamma(L_{10})\cap \epsilon(T_1)\subseteq V(R_1)$. 

Suppose that $\gamma(R_2)$ is disjoint from $\epsilon(D)$. 
Then $\gamma(R_2)\subseteq W\cup (\epsilon(T_2)\setminus\epsilon(D))$, and consequently,
by Lemmas~\ref{trees:atMostTwo} and~\ref{ADlem},
\begin{align*}
	\omega(R_2)&=|\gamma(L_{10})\cap \gamma(R_2)|
	\leq 2+|\epsilon(L_{10})\cap \gamma(R_2)|\\
	&\leq 2 + |\epsilon(L_{10}) \cap (W\cup (\epsilon(T_2)\setminus\epsilon(D)))|
	\leq 2+ \alpha_2-6<\alpha_2.
\end{align*}
On the other hand, however, 
the choice of $s^*$, see Lemma~\ref{trees:treeOutsideA}, requires that 
$\sigma(T_{s^*})\geq\sigma(T_{u_{15}})=(\alpha_1,\alpha_2)$. 
As $\sigma(T_{s^*})=(\omega(R_1),\omega(R_2))$ or 
$\sigma(T_{s^*})=(\omega(R_2),\omega(R_1))$, we obtain a contradiction.

Therefore, $\gamma(R_2)$ must meet $\epsilon(D)$ and thus contain $d$.
The $c$--$d$-path $Q'$ contained in $\gamma(R_2)$ lies in $W$. 
Moreover, as $\gamma(R_2)$ is disjoint from $P$ and from $\gamma(R_1)$
it does not meet the $a$--$b$-path $P'$ in $\gamma(T)$, as the branch vertices
contained in $P'$ lie in $P\cup \gamma(R_1)$. As $P'$ thus avoids $c$
it follows that $P'\subseteq W$. The pair $P',Q'$ is thus 
the desired linkage.

\medskip
	
	From  \eqref{trees:rootAtU\BORDER{}} and \eqref{trees:rootInH} we directly derive
	Lemma \ref{trees:mainLem}.

\section{Open problems}

We have proved that the subdivisions of all subcubic trees of sufficiently 
large pathwidth do not have the edge-\EP. 
We believe we can also prove 
that the expansions of a sufficiently large grid do not have the edge-\EP. 
Obviously, large grids have large treewidth (and large pathwidth).
Motivated by these results, we conjecture:
	\begin{conjecture}
		There is an integer $c$
		such that for every planar graph $H$ of treewidth (or even pathwidth) at least $c$,
		the family  of $H$-expansions does not have the edge-\EP.
	\end{conjecture}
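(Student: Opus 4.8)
The plan is to reduce the conjecture, through the structure theory of graphs of large pathwidth, to the two situations this paper essentially settles: subcubic trees of large pathwidth (Theorem~\ref{trees:thm}) and large grids (whose treatment is announced above). The structural input is the following dichotomy. Suppose $H$ is planar with $\pw(H)$ very large. If the treewidth of $H$ is also large, then by the planar grid theorem of Robertson, Seymour and Thomas the graph $H$ has a large grid as a minor. Otherwise $H$ has bounded treewidth; but, by the theorem of Bienstock, Robertson, Seymour and Thomas that every graph without an $F$-minor has pathwidth less than $|V(F)|$ for any forest $F$, large pathwidth still forces a complete binary tree of large height as a minor of~$H$, and since binary trees are subcubic this minor is topological. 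Hence $H$ contains, as a subgraph, a subdivision $S$ of a large binary tree --- in particular a subcubic tree $S$ whose pathwidth is large enough to apply Theorem~\ref{trees:thm} --- and we may write $H=S\cup R$, where the ``decoration'' $R$ is the rest of $H$, hanging off $S$. So every planar $H$ of sufficiently large pathwidth is of type (a), with a large grid minor, or of type (b), containing such a subcubic tree $S$ as a subgraph.

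In type (a) I would rerun Section~\ref{sec:ladder}: use the condensed wall of Section~\ref{sec:fromK4toLinkages} to route a fixed pair of deleted grid edges as an $(a$--$b,c$--$d)$-linkage, and show --- just as one shows it for the two deleted rungs of a ladder --- that every $H$-expansion in the resulting graph $G$ forces such a linkage to appear inside $W$. Then Observation~\ref{trees:wallNoTwo} rules out two edge-disjoint $H$-expansions in $G$, while $G$ still contains an $H$-expansion after the deletion of any $r-O(1)$ edges, so the edge hitting number is at least $r$; letting $r\to\infty$ gives the conclusion.

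In type (b) one must work harder, since $\{H\text{-expansions}\}\subseteq\{S\text{-expansions}\}$ is the wrong containment for a black-box reduction. Instead I would repeat the construction of Section~\ref{sec:trees} but, in the inflation steps \ref{fatedges}--\ref{fatedges2}, attach inflations of the \emph{pieces of $H$} --- the pieces $A,C,D$ into which $S$ is cut, each decorated with the part of $R$ dangling from it --- rather than of $S$ alone. Every $H$-expansion in the new graph $G$ contains a subdivision of $S$, and one re-derives the analogue of Lemma~\ref{trees:mainLem}: this subdivision of $S$, hence the whole $H$-expansion, must route an $(a$--$b,c$--$d)$-linkage through $W$. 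Edge-disjointness of two $H$-expansions passes to their linkages, contradicting Observation~\ref{trees:wallNoTwo}; and $r$ parallel copies of every gadget keep an $H$-expansion alive after deleting any $r-O(1)$ edges.

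The crux is type (b). The bookkeeping of Section~\ref{sec:trees} --- the weights $\omega$, the level sets $\lev_h$, and especially the invariance Lemma~\ref{trees:lambdaInvariance} together with Lemmas~\ref{trees:epsTrees} and~\ref{trees:treeOutsideA} --- is exactly where the argument uses that the target is a \emph{tree}, and it must be redone in the presence of the decoration $R$. One has to choose $S$ deep enough inside $H$, and carry enough additive slack throughout, that the inflation of $R$ neither creates spurious high-level vertices of $G$ (which would break Lemma~\ref{trees:lambdaInvariance}) nor inflates $\omega(D)$ or $\omega(A)$ beyond what Lemmas~\ref{ADlem} and~\ref{trees:treeOutsideA} can absorb; controlling $\lambda_G$ on the vertices of the inflated $R$ is the genuinely new work. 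Type (a), by contrast, should be little more than a faithful rerun of Section~\ref{sec:ladder}. A warning on the parameters: the constant $c$ produced this way is enormous, because the binary-tree-minor bound used in the dichotomy is only logarithmic in the pathwidth; a cleaner proof would likely come from a single ``small-pathwidth linkage router'' into which every planar graph of large pathwidth embeds in the required rigid fashion, and finding such a uniform construction may be the real path to the conjecture.
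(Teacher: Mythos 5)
This statement is one of the paper's \emph{conjectures} (Section~5, Open problems): the authors give no proof of it, and in fact explicitly flag an obstacle to the most natural attack --- they remark that their (unpublished) argument for grid-expansions ``does not carry over to graphs merely containing a (large) grid-expansion.'' So there is no proof in the paper to compare yours against, and what you have written is a research plan rather than a proof; as it stands it has genuine gaps in both branches of your dichotomy. In your type~(a), knowing only that $H$ \emph{has a large grid minor} is not enough to rerun Section~\ref{sec:ladder}: the ladder argument works because deleting two specific rungs splits the whole target graph into three pieces, each attached to the remaining terminals $a,b,c,d$ at exactly two vertices, and because one can account for \emph{all} of $H$ inside the construction. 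For a general planar $H$ containing a grid minor, the rest of $H$ must also be embedded somewhere in $G$, and you have no control over how an arbitrary $H$-expansion distributes it; this is precisely the difficulty the authors point to, so type~(a) is not ``little more than a faithful rerun.''

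The gap in type~(b) is of the same nature but sharper. The entire machinery of Section~\ref{sec:trees} rests on structural features special to subcubic trees: deleting the two paths $v_{15}Tu_{15}$ and $v_{12}Tu_{12}$ cuts $T$ into $A,C,D$ each meeting the rest of the construction only through the terminals (this is what makes \ref{epssep}, \ref{Usep} and Lemma~\ref{trees:localStructure} true); separators in $T$ have size one; subdivisions coincide with expansions because $T$ is subcubic; and the capacity/utilization bookkeeping of Lemmas~\ref{trees:atMostTwo}--\ref{trees:treeOutsideA} uses that every branch vertex of the embedded tree must land on a prescribed vertex of an inflated piece or in $W$. Once you decorate the pieces with the remainder $R$ of $H$, none of this survives: $R$ may connect $A$, $C$ and $D$ to one another (a planar graph of large pathwidth need not admit any two-edge cut into three pieces hanging off four vertices), so the four terminals no longer separate the inflated pieces; $H$ need not be subcubic, so subdivisions no longer capture expansions; and an $H$-expansion is free to place the image of $R$ inside $W$ or across several pieces, which destroys the $\lambda$-invariance and the weight counts with merely ``additive slack.'' Choosing $S$ ``deep inside $H$'' does not help, because $R$ is part of the target and every $H$-expansion embeds it wherever it pleases. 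So the crux you identify is real, but it is not a bookkeeping issue to be absorbed --- it is exactly the missing idea, and the conjecture remains open.
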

It is well known that every graph of large treewidth contains an expansion of a large
grid. 
Unfortunately, our argument for grid-expansions to which we alluded above
does not carry over to graphs merely containing a (large) grid-expansion.

\medskip

We also pose a positive conjecture, one about graph classes 
that we believe to have the edge-\EP.
It is striking that for all classes of $H$-expansions that we know have
the edge-property, we can find $H$ as a minor in a sufficiently large 
condensed wall. This is the case for long cycles, for $\theta$-graphs,
as well as for $K_4$. 
We therefore conjecture that containment in the condensed wall
is a sufficient condition:
	\begin{conjecture}
		Let $H$ be a planar graph 
		such that there is an integer $r$
		such that the condensed wall of size $r$
		contains an $H$-expansion.
		Then, the family of $H$-expansions has the edge-\EP.
	\end{conjecture}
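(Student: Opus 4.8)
The plan is to exploit the hypothesis on $H$ as a genuine structural restriction, since the edge-\EP\ is known to fail for some planar graphs (Theorem~\ref{mainthm}), so no proof can avoid using it. \textbf{Step 1 (pin down the class).} Pathwidth is minor-monotone and condensed walls have pathwidth at most~$5$ by Observation~\ref{wallTreewidth}, so every $H$ as in the conjecture is planar of pathwidth at most~$5$; more importantly it inherits, from the layers $W_1,\dots,W_r$ of the condensed wall, a \emph{linear} layout: a bounded-width path-decomposition in which consecutive bags are separated by a bounded, planarly consistent interface. I would first prove that the condensed-wall minors are \emph{exactly} the graphs admitting such a layout --- a self-contained combinatorial description --- because everything afterwards is driven by this decomposition of~$H$ rather than by the wall itself. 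I would also record that a condensed wall is a minor of a sufficiently large wall, so that each admissible $H$ is a minor of a sufficiently large wall.

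\textbf{Step 2 (reduce the host graph to bounded treewidth).} Let $G$ contain no two edge-disjoint $H$-expansions. If $G$ has large treewidth, the grid theorem~\cite{RS86} yields a large subdivided wall \emph{as a subgraph} of $G$ (walls are subcubic, so a wall minor is a subdivided-wall subgraph). Cutting this wall along a middle column splits it into two edge-disjoint subdivided sub-walls, each still large, hence each containing $H$ as a minor and so an $H$-expansion; this gives two edge-disjoint $H$-expansions, a contradiction. So we may assume $G$ has bounded treewidth --- and, since this reduction applies to any planar $H$, all the content of the conjecture lies in the bounded-treewidth case, which is exactly where the edge-\EP\ and its vertex analogue diverge.

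\textbf{Step 3 (duality on a bounded-width host).} Fix a tree-decomposition of $G$ of bounded width. The crucial observation is that an $H$-expansion, $H$ being linearly laid out by Step~1, meets the tree-decomposition along a path-like trace: it decomposes into a bounded sequence of small pieces, each routed through a bounded window of adjacent bags, the consecutive pieces glued along bounded interfaces. Edge-disjointness of two $H$-expansions then amounts to solving a concatenation of bounded Menger-type routing problems along this trace, which one should handle by a dynamic-programming / linear-programming-duality argument sweeping the tree-decomposition: either both traces route simultaneously --- two edge-disjoint $H$-expansions --- or some window carries a bounded edge cut obstructing the second routing, and accumulating these cuts over the bounded number of windows yields the hitting set. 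The known positive cases are instances of this philosophy: $K_3$ and $\theta$-graphs fall back to the vertex version, and for $K_4$ and long cycles the arguments of \cite{BH18} and \cite{BHJ17} carry out, in disguise, exactly such a routing analysis.

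\textbf{The main obstacle} is Step~3. There is at present no uniform technique, and the proofs of \cite{BH18} and \cite{BHJ17} are each intricate and tied to their particular pattern, so the real substance of the conjecture is the existence of their common generalisation --- essentially a theory of edge-disjoint topological minors of ``linearly laid out'' patterns inside bounded-treewidth hosts. Before attacking the full statement it would be prudent to test it on the first patterns beyond the known ones --- say $K_4$ with a pendant edge, a $\theta$-graph with one chord, or the short ladders left open by Theorem~\ref{mainthm}(i) --- both to confirm that condensed-wall containment is the right hypothesis and to extract the routing lemma that Step~3 needs.
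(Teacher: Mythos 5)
First, be aware that the statement you were asked about is posed in the paper as a \emph{conjecture} in the Open Problems section: the paper contains no proof of it, so there is nothing to compare your approach against. What you have written is a plan, not a proof, and by your own admission the plan is incomplete exactly where the mathematical content lies. Step~2 is the routine part: for planar $H$ (condensed-wall containment is not even needed there), large treewidth of the host yields many disjoint wall pieces and hence $k$ edge-disjoint $H$-expansions, so the problem reduces to hosts of treewidth bounded in terms of $k$ and $H$. But this reduction buys essentially nothing here, because the paper's own counterexamples (Sections~\ref{sec:ladder} and~\ref{sec:trees}) are graphs of bounded treewidth --- they are built on a condensed wall, which has pathwidth at most~$5$ by Observation~\ref{wallTreewidth}, with inflated trees and parallel paths attached. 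So the dichotomy of Theorem~\ref{mainthm} versus the conjectured positive cases is decided entirely inside the bounded-treewidth regime, and any proof must use the hypothesis ``$H$ is a minor of some condensed wall'' precisely in your Step~3. That step, as you present it (``a dynamic-programming / linear-programming-duality argument sweeping the tree-decomposition''), is not an argument: you do not say what the windows, interfaces or routing subproblems are, why edge-disjointness (as opposed to vertex-disjointness, where such sweeps are classical) can be certified or refuted by bounded cuts in each window, nor how the accumulated cuts avoid the failure mode exhibited by the paper's constructions, in which every $H$-subdivision is forced to use an $(a$--$b,c$--$d)$-linkage through the wall (Observation~\ref{trees:wallNoTwo}) even though no small edge set hits all of them (Observation~\ref{wallNoHit}).

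There is also a gap already in Step~1: the claim that condensed-wall minors are \emph{exactly} the graphs admitting your ``linear layout'' is asserted, not proved, and it is delicate --- planarity plus pathwidth at most~$5$ is certainly not sufficient (compare Lemma~\ref{HDlem}~(i): a condensed wall contains no $B_{10}$-tree, so tree-like pieces of moderate height are already excluded), so whatever layout notion you intend must encode more than width. Since your Step~3 is supposed to be driven by this unproved characterisation, the two gaps compound. In short: the proposal identifies the right battleground (bounded treewidth, with the condensed-wall hypothesis as the only available weapon) but supplies no mechanism for the decisive step; the conjecture remains open, and your own suggestion to first test small patterns such as short ladders or $K_4$ with a pendant edge is indeed the sensible next move rather than a component of a proof.
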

	
If we were so lucky that both conjectures turn out to be true, then 
we still would not have a characterisation for which $H$ the family 
of $H$-expansion has the edge-property. That is, we still would not 
have an edge-analogue of Robertson and Seymour's theorem.

Could we perhaps strengthen the first conjecture 
by believing the reverse direction of the second conjecture? 
Namely, that $H$-expansion do not have the edge-\EP\ whenever 
arbitrarily large condensed walls do not contain any $H$-expansion? 
We doubt this is true. If $H$ does not fit in the condensed wall 
but \emph{almost} fits,  then it seems exceedingly difficult 
to pursue a construction as we have done in Sections~\ref{sec:ladder} 
and~\ref{sec:trees}.

\bibliographystyle{amsplain}
\bibliography{../erdosposa}

\vfill

\small
\vskip2mm plus 1fill
\noindent
Version \today{}
\bigbreak

\noindent
Henning Bruhn
{\tt <henning.bruhn@uni-ulm.de>}\\
Matthias Heinlein
{\tt <matthias.heinlein@uni-ulm.de>}\\
Institut f\"ur Optimierung und Operations Research\\
Universit\"at Ulm\\
Germany\\

\noindent
Felix Joos
{\tt <f.joos@bham.ac.uk>}\\
School of Mathematics\\ 
University of Birmingham\\
United Kingdom

\end{document}